\DeclareMathAlphabet{\mathcalligra}{T1}{calligra}{c}{h}
\providecommand{\U}[1]{\protect\rule{.1in}{.1in}}
\newtheorem{theorem}{Theorem}[section]
\newtheorem{proposition}[theorem]{Proposition}
\newtheorem{lemma}[theorem]{Lemma}
\newtheorem{corollary}[theorem]{Corollary}
\newtheorem{remark}[theorem]{Remark}
\let\oldremark\remark
\renewcommand{\remark}{\oldremark\normalfont}
\newtheorem{example}[theorem]{Example}
\let\oldexample\example
\renewcommand{\example}{\oldexample\normalfont}
\def\<{{\langle}}
\def\>{{\rangle}}
\def\bea{\begin{eqnarray*} }
\def\eea{\end{eqnarray*} }
\def\be{\begin{equation} }
\def\ee{\end{equation} }
\def\qed{\ifhmode\unskip\nobreak\fi\ifmmode\ifinner
\else\hskip5 pt \fi\fi\hbox{\hskip5 pt \vrule width4 pt
height6 pt  depth1.5 pt \hskip 1pt }}
\DeclareMathOperator*{\Vol}{Vol}
\DeclareMathOperator*{\supp}{supp}
\DeclareMathOperator*{\expo}{exp}
\DeclareMathOperator*{\grad}{grad}
\DeclareMathOperator*{\ess}{ess}
\DeclareMathOperator*{\Lip}{Lip}
\begin{document}

\title{Coverings preserving the bottom of the spectrum}
\author{Panagiotis Polymerakis}
\date{}

\maketitle

\renewcommand{\thefootnote}{\fnsymbol{footnote}}
\footnotetext{\emph{Date:} \today} 
\renewcommand{\thefootnote}{\arabic{footnote}}

\renewcommand{\thefootnote}{\fnsymbol{footnote}}
\footnotetext{\emph{2010 Mathematics Subject Classification.} 58J50, 35P15, 53C99.}
\renewcommand{\thefootnote}{\arabic{footnote}}

\renewcommand{\thefootnote}{\fnsymbol{footnote}}
\footnotetext{\emph{Key words and phrases.} Bottom of spectrum, Schr\"{o}dinger operator, amenable covering.}
\renewcommand{\thefootnote}{\arabic{footnote}}

\begin{abstract}
We prove that if a Riemannian covering preserves the bottom of the spectrum of a Schr\"{o}dinger operator, which belongs to the discrete spectrum of the operator on the base manifold, then the covering is amenable.


\end{abstract}

\section{Introduction}

The spectrum of the Laplacian on a Riemannian manifold is a natural isometric invariant. However, its behavior under maps between Riemannian manifolds, which respect the geometry of the manifolds to some extent, remains largely unclear. In this paper, we study the behavior of the bottom (that is, the minimum) of the spectrum under Riemannian coverings.

Let $p \colon M_{2} \to M_{1}$ be a Riemannian covering, $S_{1} = \Delta + V$ a Schr\"{o}dinger operator on $M_{1}$, with $V$ smooth and bounded from below, and $S_{2} = \Delta + V \circ p$ its lift on $M_{2}$. Then the bottoms of their spectra always satisfy the inequality $\lambda_{0}(S_{1}) \leq \lambda_{0}(S_{2})$. It is natural to examine when the equality holds.
Brooks \cite{Brooks1} proved that if the underlying manifold is closed (that is, compact without boundary), then a normal covering $p$ preserves the bottom of the spectrum of the Laplacian if and only if $p$ is amenable.

This theorem motivated the study of the behavior of the bottom of the spectrum under amenable coverings. Extending some former results \cite{Brooks2,MR3104995}, it was proved in \cite{BMP1} that amenable Riemannian coverings preserve the bottom of the spectrum of Schr\"{o}dinger operators, without any topological or geometric assumptions on the manifolds. In \cite{Mine}, it was proved that if, in addition, $M_{1}$ is complete, then the spectra of the operators satisfy $\sigma(S_{1}) \subset \sigma(S_{2})$. If, in addition, the covering is infinite sheeted, then $\sigma(S_{1}) \subset \sigma_{\ess}(S_{2})$, where $\sigma_{\ess}$ stands for the essential spectrum of the operator.

Although amenability of the covering is a natural assumption for the preservation of the bottom of the spectrum, it is not clear to what extent it is optimal. In this direction, Brooks \cite{Brooks2}, and Roblin and Tapie \cite{RT}, proved that under some quite restrictive assumptions, if the bottom of the spectrum of the Laplacian is preserved, then the covering is amenable. These assumptions involve the spectrum of fundamental domains of the covering and in particular, imply that the bottom of the spectrum of the Laplacian on $M_{1}$ belongs to its discrete spectrum (that is, the bottom is an isolated point of the spectrum and in particular, an eigenvalue). Moreover, in both results, the covering is assumed to be normal, with finitely generated deck transformations group.
Recently, in \cite{BMP2}, these conditions were replaced with some more natural geometric assumptions. 
More precisely, it was proved that if the manifolds are complete, without boundary, with Ricci curvature bounded from below, $V$ and $\grad V$ are bounded, the bottom of the spectrum is preserved, and belongs to the discrete spectrum of $S_{1}$, then the covering is amenable. A question raised in \cite{BMP2} is whether the assumption on the Ricci curvature is necessary. In this paper, we deal with this question and establish a generalization of all the above results.

Initially, using the result of \cite{BMP2} we prove the following analogue of Brooks' theorem \cite{Brooks1}, involving the bottom of the Neumann spectrum of the Laplacian on manifolds with (smooth) boundary.

\begin{theorem}\label{Theorem Neumann}
Let $p \colon M_{2} \to M_{1}$ be a Riemannian covering, with $M_{1}$ compact with boundary. Then $\lambda_{0}^{N}(M_{2}) = 0$ if and only if $p$ is amenable. 
\end{theorem}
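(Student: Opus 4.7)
The plan is to reduce the statement to the main theorem of \cite{BMP2} by passing to the metric double. For the forward implication, amenability of $p$ yields $\lambda_{0}^{N}(M_{2})=\lambda_{0}^{N}(M_{1})=0$ by a direct Neumann adaptation of the construction in \cite{BMP1}: an approximate eigenfunction on $M_{2}$ is produced from a Neumann test function on $M_{1}$ by a F\o lner-type averaging, which goes through in the variational domain of the Neumann Laplacian, namely on test functions in $\Lip_{c}(M_{i})$ whose support may meet $\partial M_{i}$. Since $\lambda_{0}^{N}(M_{1})=0$ is attained by constants, this gives $\lambda_{0}^{N}(M_{2})=0$.

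For the converse, the first step is to perturb the Riemannian metric on $M_{1}$ inside a collar of $\partial M_{1}$ to a Riemannian product $g_{\partial M_{1}}+dt^{2}$ near the boundary. This perturbation is bi-Lipschitz to the original, so the Rayleigh quotients of the Neumann Laplacian on $M_{1}$ and, after lifting, on $M_{2}$, are uniformly comparable; since $\lambda_{0}^{N}\geq 0$ and equals $0$ originally, the value $0$ is preserved under the perturbation. With the metric in product form near the seam, the double $\tilde{M}_{1}$ (two copies of the perturbed $M_{1}$ glued along $\partial M_{1}$) is a smooth closed Riemannian manifold, and analogously $\tilde{M}_{2}$ is a smooth Riemannian manifold covering $\tilde{M}_{1}$ via the canonical Riemannian covering $\tilde{p}\colon\tilde{M}_{2}\to\tilde{M}_{1}$.

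The standard reflection decomposition of $L^{2}(\tilde{M}_{i})$ into symmetric and antisymmetric parts relative to $\partial M_{i}\subset\tilde{M}_{i}$ identifies the symmetric part with the Neumann realization of $\Delta$ on $M_{i}$ and the antisymmetric part with the Dirichlet realization; in particular, $\lambda_{0}(\tilde{M}_{i})=\min(\lambda_{0}^{N}(M_{i}),\lambda_{0}^{D}(M_{i}))=\lambda_{0}^{N}(M_{i})$, so $\lambda_{0}(\tilde{M}_{2})=0=\lambda_{0}(\tilde{M}_{1})$. Because $\tilde{M}_{1}$ is closed, $\lambda_{0}(\tilde{M}_{1})$ is an isolated eigenvalue (hence lies in the discrete spectrum), the Ricci curvature of $\tilde{M}_{1}$, and therefore of $\tilde{M}_{2}$, is bounded below, and the zero potential trivially satisfies the hypotheses on $V$ and $\grad V$. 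Applying the main theorem of \cite{BMP2} to $\tilde{p}$ with $V=0$ shows that $\tilde{p}$ is amenable; since $\tilde{p}$ is assembled from two isometric copies of $p$ glued along $p^{-1}(\partial M_{1})$, its fibres and the monodromy action on a fibre are in canonical bijection with those of $p$, so amenability descends back to $p$.

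The principal obstacle I anticipate is precisely the smoothness of the doubled metric at the seam: without the product-collar perturbation, $\tilde{M}_{1}$ carries only a Lipschitz Riemannian structure there, outside the scope of \cite{BMP2}. The bi-Lipschitz comparability of Rayleigh quotients guarantees that the hypothesis $\lambda_{0}^{N}(M_{2})=0$ survives the perturbation, making the reduction legitimate.
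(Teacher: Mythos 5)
Your proof is correct, and for the converse implication it takes a genuinely different and noticeably shorter route than the paper's. Both arguments begin identically, by deforming the metric of $M_{1}$ to a Riemannian product in a collar of $\partial M_{1}$ (the paper's Lemma~\ref{metric}) and noting that this rescales Rayleigh quotients by bounded factors, so the hypothesis $\lambda_{0}^{N}(M_{2})=0$ survives. They diverge after that: the paper glues half-infinite cylinders $\partial M_{1}\times[0,\infty)$ to obtain a complete non-compact $N_{1}$, designs a non-trivial potential $V$ on the end so that $\lambda_{0}(\Delta+V)=0<1\le\lambda_{0}^{\ess}(\Delta+V)$, and must work to verify that $\lambda_{0}(S_{2})=\lambda_{0}(S_{1})$, via the boundary approximation Lemma~\ref{approximation}, Lemma~\ref{bottom zero}, and the renormalization argument of Proposition~\ref{equality of bottoms}. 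You pass instead to the metric double $\tilde{M}_{1}$, which is closed, so that $V\equiv 0$ suffices, $\lambda_{0}(\tilde{M}_{1})=0$ is attained by constants, $\sigma_{\ess}(\tilde{M}_{1})$ is automatically empty, and $\lambda_{0}(\tilde{M}_{2})=0$ follows from even reflection of Neumann test functions together with Proposition~\ref{Bottom of Neumann}; after that \cite{BMP2} applies directly. Two small refinements are worth making. First, you only need the one-sided inequality $\lambda_{0}(\tilde{M}_{2})\le\lambda_{0}^{N}(M_{2})$, obtained by reflecting a compactly supported Lipschitz test function, rather than the full unitary identification $\Delta_{\tilde{M}_{2}}\cong\Delta_{M_{2}}^{N}\oplus\Delta_{M_{2}}^{D}$, which requires some care on the non-compact $M_{2}$. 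Second, the phrase \enquote{the monodromy action on a fibre is in canonical bijection with that of $p$} should be replaced by the precise mechanism: for a base point $x\in M_{1}^{\circ}$ one has $\tilde{p}^{-1}(x)=p^{-1}(x)$, and the inclusion $M_{1}\hookrightarrow\tilde{M}_{1}$ induces a homomorphism $\pi_{1}(M_{1})\to\pi_{1}(\tilde{M}_{1})$ under which the monodromy of $\tilde{p}$ restricts to that of $p$; a $\pi_{1}(\tilde{M}_{1})$-invariant mean on $L^{\infty}(p^{-1}(x))$ is then a fortiori $\pi_{1}(M_{1})$-invariant, so $p$ is amenable. Note finally that the paper deliberately proves the slightly stronger Theorem~\ref{Neumann compact} for possibly non-connected $M_{2}$, because that version is what gets invoked in Lemma~\ref{key}; your doubling argument adapts to that case as well (component by component, using the non-connected form of the \cite{BMP2} theorem recorded as Theorem~\ref{weak converse}), but as written it addresses only the connected statement of Theorem~\ref{Theorem Neumann}.
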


The fact that amenable coverings preserve the bottom of the Neumann spectrum was essentially established in \cite{Mine}. The main point in the above theorem is the converse implication, which is the first result providing amenability of a covering of manifolds with boundary. This turns out to be quite useful in the study of arbitrary Riemannian coverings. More precisely, as an application of this theorem, we prove the following result.


\begin{theorem}\label{Main result intro}
Let $p \colon M_{2} \to M_{1}$ be a Riemannian covering. Let $S_{1}$ be a Schr\"{o}dinger operator on $M_{1}$, with $\lambda_{0}(S_{1}) \notin \sigma_{\ess}(S_{1})$, and $S_{2}$ its lift on $M_{2}$. Then $\lambda_{0}(S_{2}) = \lambda_{0}(S_{1})$ if and only if $p$ is amenable.
\end{theorem}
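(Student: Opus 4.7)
The ``if'' direction is already known from \cite{BMP1}. For the converse, suppose $\lambda_0(S_2) = \lambda_0(S_1) =: \lambda_0$. Since $\lambda_0 \notin \sigma_{\ess}(S_1)$, it is an isolated eigenvalue of $S_1$ of finite multiplicity, so there exists a positive eigenfunction $\varphi \in L^{2}(M_1)$ with $S_1 \varphi = \lambda_0 \varphi$. Its lift $\tilde\varphi := \varphi \circ p > 0$ satisfies $S_2 \tilde\varphi = \lambda_0 \tilde\varphi$ on $M_2$, though it is not in $L^{2}(M_2)$ in general. The plan is to reduce the problem, via the ground state substitution together with a compact exhaustion of $M_1$, to the setting of Theorem \ref{Theorem Neumann}.

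For any $f \in C^{\infty}_{c}(M_2)$, integration by parts combined with $S_2 \tilde\varphi = \lambda_0 \tilde\varphi$ yields the identity
\[
\mathcal{R}_{S_2}(\tilde\varphi f) - \lambda_0 \;=\; \frac{\int_{M_2} \tilde\varphi^{2}\,|\nabla f|^{2}\,dv_{2}}{\int_{M_2} \tilde\varphi^{2}\,f^{2}\,dv_{2}},
\]
and analogously on $M_1$ with $\varphi$ in place of $\tilde\varphi$. Thus $\lambda_0(S_2) = \lambda_0$ becomes equivalent to the statement that the bottom of the weighted Laplacian on the weighted Riemannian covering $(M_2, g_2, \tilde\varphi^{2}\,dv_2) \to (M_1, g_1, \varphi^{2}\,dv_1)$ vanishes, while the base carries finite weighted volume $\|\varphi\|_{L^{2}}^{2}$ with the constant function as a ground state.

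Next I would pick, via a compact exhaustion of $M_1$ together with Sard's theorem (applied either to $\varphi$ itself or to a distance-from-base-point function), a compact subdomain $\Omega \subset M_1$ with smooth boundary satisfying $\int_{M_1 \setminus \Omega} \varphi^{2}\,dv_1 < \varepsilon$. On $\bar\Omega$ the function $\varphi$, and hence $\tilde\varphi$ on $\tilde\Omega := p^{-1}(\Omega)$, is bounded above and bounded away from zero, so the weighted and unweighted Rayleigh quotients are comparable there. Enlarging $\Omega$ so that $\pi_{1}(\Omega) \twoheadrightarrow \pi_{1}(M_1)$ (or appealing to a limit argument when $\pi_{1}(M_1)$ is not finitely generated), the amenability of the pullback covering $\tilde\Omega \to \Omega$ is equivalent to that of $p$. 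I would then: produce $f_{n} \in C_c^{\infty}(M_2)$ whose weighted Dirichlet-to-$L^{2}$ ratio tends to zero via the identity above; truncate each $f_{n}$ to $\tilde\Omega$ using that, on each sheet, the $\tilde\varphi^{2}$-mass over $M_2 \setminus \tilde\Omega$ is controlled by $\|\varphi\|_{L^{2}(M_1 \setminus \Omega)}^{2} < \varepsilon$; and transition from the weighted to the unweighted Rayleigh quotient on $\tilde\Omega$ through the two-sided bounds on $\tilde\varphi|_{\tilde\Omega}$. The outcome is $\lambda_0^{N}(\tilde\Omega) = 0$, and Theorem \ref{Theorem Neumann} applied to $\tilde\Omega \to \Omega$ then yields amenability of $\tilde\Omega \to \Omega$, hence of $p$.

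The main obstacle is the truncation step. The approximate ground states of $S_2$ may escape every compact set of $M_2$, so one must exploit the covering structure together with the finiteness of $\int_{M_1} \varphi^{2}$ to cut the $f_n$ off to $\tilde\Omega$ without destroying the vanishing of the weighted Rayleigh quotient; this is precisely where the $L^{2}$-ness of the base ground state, i.e.\ the hypothesis $\lambda_0(S_1) \notin \sigma_{\ess}(S_1)$, is used. A secondary delicate point is the amenability transfer from $\tilde\Omega \to \Omega$ to $p$, which requires either a $\pi_{1}$-surjective choice of $\Omega$ or, in general, an exhaustion/limit argument combined with the characterization of amenability of coverings in Følner-type terms.
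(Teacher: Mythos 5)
Your opening is in line with the paper: use the positive eigenfunction $\varphi$ of $S_1^N$ (Proposition \ref{eigenfunction}), its lift $\tilde\varphi$, and the ground-state substitution (renormalization $S_{\tilde\varphi}$) so that $\lambda_0(S_2)=\lambda_0(S_1)$ becomes $\lambda_0(S_{\tilde\varphi})=0$; and the intent of reducing to Theorem \ref{Theorem Neumann} is also the paper's. However, the way you propose to carry out the reduction has a genuine gap at exactly the point you flag as the ``main obstacle,'' and the gap is not of the kind that can be patched by the device you sketch.

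Your truncation step asserts that, because $\int_{M_1 \smallsetminus \Omega}\varphi^2 < \varepsilon$, the $\tilde\varphi^2$-mass of each $f_n$ over $M_2\smallsetminus\tilde\Omega$ is small, so $f_n$ can be cut off to $\tilde\Omega$. This conflates the weighted \emph{volume} per sheet with the weighted \emph{mass of $f_n$}. In an infinite-sheeted covering, $f_n$ can put essentially all of its unit $L^2_{\tilde\varphi}$-mass outside $\tilde\Omega$ even though each sheet individually contributes $\tilde\varphi^2$-volume at most $\varepsilon$: the mass simply spreads over many sheets. Worse, this is not just a theoretical possibility: the paper's Lemma \ref{key} proves precisely that if the restricted covering $p\colon p^{-1}(K')\to K'$ is non-amenable (which, if you had $\pi_1(\Omega)\twoheadrightarrow\pi_1(M_1)$ and $p$ non-amenable, would hold for $K'=\Omega$), then $\int_{p^{-1}(K')} f_n^2\tilde\varphi^2 \to 0$, i.e.\ the mass \emph{must} escape $\tilde\Omega$. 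So in exactly the case where you would need the truncation to produce $\lambda_0^N(\tilde\Omega)=0$ and thereby extract a contradiction, the truncation instead gives you functions of vanishing mass on $\tilde\Omega$, and you learn nothing. The paper exploits this escape rather than fighting it: after cutting off \emph{inside} a smaller compact $K\subset K'$ it obtains a minimizing sequence whose pushdown to $M_1$ avoids $K$, and this contradicts Proposition \ref{Decomposition}, since on $M_1$ the hypothesis $\lambda_0^N(S_1)\notin\sigma^N_{\ess}(S_1)$ forces minimizing sequences to converge in $L^2$ to the nowhere-vanishing eigenfunction. That is where the discreteness hypothesis actually enters — not through square-integrability of $\varphi$ being used to confine the mass of $f_n$.

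A secondary but real gap is the amenability transfer. Requiring a compact $\Omega$ with $\pi_1(\Omega)\twoheadrightarrow\pi_1(M_1)$ is impossible when $\pi_1(M_1)$ is not finitely generated, and the ``limit argument'' you gesture at is precisely the nontrivial content you would need to supply. The paper sidesteps this cleanly via Proposition \ref{amenability domains}: amenability of a covering is a Følner-type condition checked on finitely generated subgroups, so non-amenability of $p$ is already witnessed by \emph{some} smoothly bounded compact neighborhood $K'$ of a basepoint (with $p\colon p^{-1}(K')\to K'$ non-amenable), with no surjectivity of $i_*\pi_1(K')\to\pi_1(M_1)$ needed. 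This is where Theorem \ref{Theorem Neumann} (more precisely, Theorem \ref{Neumann compact} for possibly disconnected covers) is then applied — on $p^{-1}(K')\to K'$, to get $\lambda_0^N(p^{-1}(K'))>0$, the quantitative input that drives the mass-escape estimate.
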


It is worth to point out that we do not impose any topological or geometric assumptions on the manifolds. Hence, Theorem \ref{Main result intro} is more general than the results of \cite{BMP2, Brooks2, RT}, since their assumptions imply that $\lambda_{0}(S_{1}) \notin \sigma_{\ess}(S_{1})$. Examining the optimality of the assumption $\lambda_{0}(S_{1}) \notin \sigma_{\ess}(S_{1})$ in this theorem, we show that it cannot be replaced with $\lambda_{0}(S_{1})$ being an eigenvalue.

Since the manifolds in the above theorem may be non-complete, we obtain immediately the corresponding result for Dirichlet spectra of Schr\"{o}dinger operators on manifolds with boundary. For sake of completeness, we also establish the corresponding result for the Neumann spectra, obtaining a generalization of Theorem \ref{Theorem Neumann}.
In particular, we obtain analogues of Brooks' result \cite{Brooks1} for Schr\"{o}dinger operators on manifolds with boundary. Namely, it follows that a Riemannian covering of a compact manifold is amenable if and only if it preserves the bottom of the Dirichlet/Neumann spectrum of some/any Schr\"{o}dinger operator. In virtue of \cite[Theorem 1]{Mine}, this is actually equivalent to the inclusion of the Dirichlet (Neumann) spectrum of a Schr\"{o}dinger operator on $M_{1}$, in the Dirichlet (Neumann, respectively) spectrum of its lift on $M_{2}$. The corresponding statement for manifolds without boundary has been established in \cite[Theorem 1.5]{Mine}.

Finally, as another application of Theorem \ref{Theorem Neumann}, we prove that if an infinite sheeted Riemannian covering preserves the bottom of the spectrum of a Schr\"{o}dinger operator, then the bottom of the spectrum belongs to the essential spectrum of the operator on the covering space. This was observed for the Laplacian in \cite{BMP2}. For sake of completeness, we establish the analogous result for the Dirichlet and Neumann spectra of Schr\"{o}dinger operators on manifolds with boundary.





The paper is organized as follows: In Section \ref{Section Preliminaries}, we give some preliminaries. In Section \ref{Section Spectrum}, we present some properties of the spectrum of Schr\"{o}dinger operators. In Section \ref{Section Compact}, we study Riemannian coverings of compact manifolds and establish Theorem \ref{Theorem Neumann}. In Section \ref{Section Arbitrary}, we study arbitrary Riemannian coverings and prove Theorem \ref{Main result intro} and the corresponding results for manifolds with boundary. In Section \ref{Section Application}, we show the aforementioned application of Theorem \ref{Theorem Neumann} for infinite sheeted coverings. \medskip

\textbf{Acknowledgements.} I would like to thank Werner Ballmann and Dorothee Sch\"{u}th for their helpful comments and remarks. I am also grateful to the Max Planck Institute for Mathematics in Bonn for its support and hospitality

\section{Preliminaries}\label{Section Preliminaries}

We begin by recalling some definitions and standard facts from functional analysis, which may be found for instance, in \cite{MR1335452} and \cite[Appendix A]{Taylor1}.

Let $L \colon \mathcal{D}(L) \subset \mathcal{H} \to \mathcal{H}$ be a self-adjoint operator on a separable Hilbert space $\mathcal{H}$, over $\mathbb{R}$ or $\mathbb{C}$. The \textit{spectrum} of $L$ is given by
\[
\sigma(L) := \{ \lambda \in \mathbb{R} : (L - \lambda) \colon \mathcal{D}(L) \subset \mathcal{H} \to \mathcal{H} \text{ not invertible} \}.
\]
The \textit{essential spectrum} of $L$ is defined as
\[
\sigma_{\ess}(L) := \{ \lambda \in \mathbb{R} : (L - \lambda) \colon \mathcal{D}(L) \subset \mathcal{H} \to \mathcal{H} \text{ not Fredholm} \}.
\]
Recall that an operator is called \textit{Fredholm} if its kernel is finite dimensional and its range is closed and of finite codimension. The \textit{discrete spectrum} of $L$ is given by $\sigma_{d}(L) := \sigma(L) \smallsetminus \sigma_{\ess}(L)$, and consists of isolated eigenvalues of $L$ of finite multiplicity.

The spectrum of a self-adjoint operator is a closed subset of $\mathbb{R}$. If $\sigma(L)$ is bounded from below, then its minimum is called the \textit{bottom} of the spectrum of $L$ and is denoted by $\lambda_{0}(L)$. 
The following characterization is due to Rayleigh.
\begin{proposition}\label{Rayleigh}
If $\sigma(L)$ is bounded from below, then the bottom of the spectrum of $L$ is given by
\[
\lambda_{0}(L) = \inf_{v \in \mathcal{D}(L) \smallsetminus \{0\}} \frac{\langle L v, v \rangle_{\mathcal{H}}}{\| v \|_{\mathcal{H}}^{2}}.
\]
\end{proposition}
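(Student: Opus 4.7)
The plan is to set $\mu := \inf_{v \in \mathcal{D}(L) \smallsetminus \{0\}} \langle Lv, v\rangle_{\mathcal{H}} / \|v\|_{\mathcal{H}}^{2}$ and prove the two inequalities $\mu \geq \lambda_{0}(L)$ and $\mu \leq \lambda_{0}(L)$ separately, using the spectral theorem for self-adjoint operators. Recall that the spectral theorem realizes $L$ as $L = \int t\, dE(t)$, where $E$ is the projection-valued measure of $L$, whose support equals $\sigma(L)$ and is therefore contained in $[\lambda_{0}(L), \infty)$.

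For the inequality $\mu \geq \lambda_{0}(L)$, I would argue directly. For any $v \in \mathcal{D}(L)$, the scalar measure $t \mapsto \langle E(t)v, v\rangle_{\mathcal{H}}$ is positive, has total mass $\|v\|_{\mathcal{H}}^{2}$, and is supported in $[\lambda_{0}(L), \infty)$. Hence
\[
\langle Lv, v\rangle_{\mathcal{H}} = \int t\, d\langle E(t)v, v\rangle_{\mathcal{H}} \;\geq\; \lambda_{0}(L)\,\|v\|_{\mathcal{H}}^{2},
\]
and dividing by $\|v\|_{\mathcal{H}}^{2}$ and taking the infimum yields the claim.

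For the reverse inequality $\mu \leq \lambda_{0}(L)$, the idea is to produce approximate minimizers from the spectral calculus. Fix $\varepsilon > 0$. Since $\lambda_{0}(L) \in \sigma(L) = \supp(E)$, the spectral projection $P_{\varepsilon} := E([\lambda_{0}(L), \lambda_{0}(L) + \varepsilon))$ is nonzero. For any nonzero $v$ in its range, the measure $\langle E(\cdot)v, v\rangle_{\mathcal{H}}$ is supported in the bounded interval $[\lambda_{0}(L), \lambda_{0}(L) + \varepsilon)$, so $v$ automatically belongs to $\mathcal{D}(L)$ and
\[
\langle Lv, v\rangle_{\mathcal{H}} = \int_{[\lambda_{0}(L), \lambda_{0}(L)+\varepsilon)} t\, d\langle E(t)v, v\rangle_{\mathcal{H}} \;\leq\; (\lambda_{0}(L) + \varepsilon)\,\|v\|_{\mathcal{H}}^{2}.
\]
Thus $\mu \leq \lambda_{0}(L) + \varepsilon$, and letting $\varepsilon \to 0$ completes the proof.

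No serious obstacle arises: the proposition is a direct consequence of the spectral theorem. The only detail worth flagging is that vectors in the range of a spectral projection over a bounded Borel set automatically lie in $\mathcal{D}(L)$, which guarantees that the approximate minimizers constructed in the second step are admissible competitors in the variational formula.
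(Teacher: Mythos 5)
Your proof is correct, and it is the standard spectral-theorem argument for Rayleigh's variational characterization. Note that the paper itself states this proposition without proof (it is attributed to Rayleigh and treated as classical), so there is no proof in the paper to compare against; your argument — bounding $\langle Lv,v\rangle_{\mathcal H}$ from below via the support of the spectral measure, and producing approximate minimizers in the range of $E\bigl([\lambda_{0}(L),\lambda_{0}(L)+\varepsilon)\bigr)$, which is nonzero because $\lambda_{0}(L)\in\supp E$ and whose elements automatically lie in $\mathcal D(L)$ — is exactly the expected proof of this fact.
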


Let $T \colon \mathcal{D}(T) \subset \mathcal{H} \to \mathcal{H}$ be a densely defined, symmetric linear operator. Assume that $T$ is \textit{bounded from below}, that is, there exists $c \in \mathbb{R}$, such that
\begin{equation}\label{bounded from below}
\langle T v,v \rangle_{\mathcal{H}} \geq c \| v \|_{\mathcal{H}}^{2},
\end{equation}
for all $v \in \mathcal{D}(T)$. Fix such a $c \in \mathbb{R}$ (not necessarily the supremum of all $c$ for which (\ref{bounded from below}) holds) and consider the inner product
\[
\langle v_{1}, v_{2} \rangle_{\mathcal{H}_{1}} := \langle Tv_{1},v_{2}\rangle_{\mathcal{H}} + (1-c) \langle v_{1} , v_{2} \rangle_{\mathcal{H}}
\]
on $\mathcal{D}(T)$. Let $\mathcal{H}_{1}$ be the completion of $\mathcal{D}(T)$ with respect this inner product. Then $\mathcal{H}_{1}$ can be identified with a dense subspace of $\mathcal{H}$, via a continuous injection. The domain of the Friedrichs extension $T^{(F)}$ of $T$ is given by
\[
\mathcal{D}(T^{(F)}) := \{ v \in \mathcal{H}_{1} : \text{there exists } v^{\prime} \in \mathcal{H}, \text{ such that } \langle v^{\prime} , w \rangle_{\mathcal{H}} = \langle v,w \rangle_{\mathcal{H}_{1}} \text{, for all } w \in \mathcal{H}_{1} \}.
\]
For $v \in \mathcal{D}(T^{(F)})$, we define $T^{(F)}v := v^{\prime} + (c-1) v$. Then $T^{(F)}$ is a self-adjoint extension of $T$ and is called the \textit{Friedrichs extension} of $T$.

\begin{proposition}\label{Bottom of Friedrichs}
The bottom of the spectrum of the Friedrichs extension of $T$ is given by
\[
\lambda_{0}(T^{(F)}) = c-1 +  \inf_{v \in \mathcal{H}^{\prime} \smallsetminus\{0\}} \frac{\| v \|_{\mathcal{H}_{1}}^{2}}{\| v \|_{\mathcal{H}}^{2}},
\]
where the infimum may be taken over any subspace $\mathcal{H}^{\prime}$, with $\mathcal{D}(T) \subset \mathcal{H}^{\prime} \subset \mathcal{H}_{1}$.
\end{proposition}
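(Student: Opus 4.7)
The plan is to reduce the statement to Rayleigh's formula (Proposition \ref{Rayleigh}) applied to the self-adjoint operator $T^{(F)}$, and then to exploit density of $\mathcal{D}(T)$ in $\mathcal{H}_{1}$ to change the set over which the infimum is taken.

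First, I would compute the Rayleigh quotient $\langle T^{(F)} v, v \rangle_{\mathcal{H}}/\|v\|_{\mathcal{H}}^{2}$ for $v \in \mathcal{D}(T^{(F)}) \smallsetminus \{0\}$. By the very definition of $T^{(F)}$, we have $T^{(F)} v = v' + (c-1)v$ with $v'$ characterized by $\langle v', w \rangle_{\mathcal{H}} = \langle v,w \rangle_{\mathcal{H}_{1}}$ for all $w \in \mathcal{H}_{1}$. Taking $w = v$ (which lies in $\mathcal{H}_{1}$) yields $\langle v', v \rangle_{\mathcal{H}} = \|v\|_{\mathcal{H}_{1}}^{2}$, hence
\[
\frac{\langle T^{(F)} v, v \rangle_{\mathcal{H}}}{\|v\|_{\mathcal{H}}^{2}} = (c-1) + \frac{\|v\|_{\mathcal{H}_{1}}^{2}}{\|v\|_{\mathcal{H}}^{2}}.
\]
Combining this with Proposition \ref{Rayleigh} gives the desired identity with the infimum taken over $\mathcal{D}(T^{(F)}) \smallsetminus \{0\}$.

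Next I would establish that this infimum is unchanged if one replaces $\mathcal{D}(T^{(F)})$ by any subspace $\mathcal{H}'$ with $\mathcal{D}(T) \subset \mathcal{H}' \subset \mathcal{H}_{1}$. Since $\mathcal{D}(T) \subset \mathcal{D}(T^{(F)})$, it suffices to prove that the infimum of $\|v\|_{\mathcal{H}_{1}}^{2}/\|v\|_{\mathcal{H}}^{2}$ over $\mathcal{D}(T) \smallsetminus \{0\}$ coincides with the same infimum over $\mathcal{H}_{1} \smallsetminus \{0\}$; both extreme values will sandwich the infimum over any intermediate $\mathcal{H}'$. One inequality is obvious. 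For the other, given $v \in \mathcal{H}_{1} \smallsetminus \{0\}$, use the fact that $\mathcal{D}(T)$ is dense in $\mathcal{H}_{1}$ (by construction of $\mathcal{H}_{1}$ as a completion) to choose $v_{n} \in \mathcal{D}(T)$ with $v_{n} \to v$ in $\mathcal{H}_{1}$. The continuous injection $\mathcal{H}_{1} \hookrightarrow \mathcal{H}$ (which follows from $\|\cdot\|_{\mathcal{H}}^{2} \leq \|\cdot\|_{\mathcal{H}_{1}}^{2}$, a consequence of the choice of $c$) implies $v_{n} \to v$ in $\mathcal{H}$ as well; in particular $v_{n} \neq 0$ for large $n$ and
\[
\frac{\|v_{n}\|_{\mathcal{H}_{1}}^{2}}{\|v_{n}\|_{\mathcal{H}}^{2}} \longrightarrow \frac{\|v\|_{\mathcal{H}_{1}}^{2}}{\|v\|_{\mathcal{H}}^{2}},
\]
which forces the inequality in the remaining direction.

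I do not expect any serious obstacle here: the only point that requires care is verifying that $\|\cdot\|_{\mathcal{H}} \leq \|\cdot\|_{\mathcal{H}_{1}}$ on $\mathcal{D}(T)$ (so that the completion really embeds continuously into $\mathcal{H}$), which is immediate from $\langle v,v\rangle_{\mathcal{H}_{1}} = \langle Tv,v\rangle_{\mathcal{H}} + (1-c)\|v\|_{\mathcal{H}}^{2} \geq \|v\|_{\mathcal{H}}^{2}$ by (\ref{bounded from below}), and the mild bookkeeping to pass from $\mathcal{D}(T^{(F)})$ down to $\mathcal{D}(T)$ and up to $\mathcal{H}_{1}$ via the density/continuity argument above.
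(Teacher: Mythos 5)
Your proof is correct and follows essentially the same route as the paper: compute the Rayleigh quotient on $\mathcal{D}(T^{(F)})$ via the defining relation $\langle v',w\rangle_{\mathcal{H}} = \langle v,w\rangle_{\mathcal{H}_1}$ with $w=v$, apply Proposition \ref{Rayleigh}, then use density of $\mathcal{D}(T)$ in $\mathcal{H}_1$ together with the continuous embedding $\mathcal{H}_1 \hookrightarrow \mathcal{H}$ to show the infima over $\mathcal{D}(T)$, $\mathcal{D}(T^{(F)})$, and $\mathcal{H}_1$ all coincide. You merely spell out the density/continuity step that the paper compresses into ``it is easy to see.''
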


\begin{proof}
Evidently, for a non-zero $v \in \mathcal{D}(T^{(F)})$, we have
\[
c-1 + \frac{\| v \|_{\mathcal{H}_{1}}^{2}}{\| v \|_{\mathcal{H}}^{2}} = \frac{\langle T^{(F)}v, v \rangle_{\mathcal{H}}}{\| v \|_{\mathcal{H}}^{2}}.
\]
From Proposition \ref{Rayleigh}, we obtain the asserted equality, where the infimum is taken over all $v \in \mathcal{D}(T^{(F)}) \smallsetminus \{0\}$. From the definition of $\mathcal{H}_{1}$, it is easy to see that we obtain the same infimum for $v \in \mathcal{D}(T) \smallsetminus \{0\}$ and for $v \in \mathcal{H}_{1} \smallsetminus \{0\}$. \qed
\end{proof}
\subsection{Schr\"{o}dinger operators}\label{Schroedinger preliminaries}

Throughout this paper, manifolds are assumed to be connected with not necessarily connected, possibly empty, smooth boundary, unless otherwise stated. Let $M$ be a possibly non-connected Riemannian manifold. A \textit{Schr\"{o}dinger operator} on $M$ is an operator of the form $S = \Delta + V$, where $\Delta$ is the (non-negative definite) Laplacian and $V \colon M \to \mathbb{R}$ is smooth and bounded from below.
On the space $C^{\infty}_{c}(M)$ consider the inner product
\[
\langle f , g \rangle_{H_{V}(M)} := \int_{M} (\langle \grad f, \grad g \rangle + (V - {\inf}_{M}V + 1)f g).
\]


If $M$ has empty boundary, let $H_{V}(M)$ be the completion of $C^{\infty}_{c}(M)$ with respect to this inner product. If $M$ has non-empty boundary, let $H_{V}(M)$ be the completion of $\{ f \in C^{\infty}_{c}(M) : \nu(f) = 0 \text{ on } \partial M \}$ with respect to this inner product, where $\nu$ is the inward pointing normal to $\partial M$. It is clear that $H_{V}(M)$ can be identified with a dense subspace of $L^{2}(M)$, via a continuous injection.

If $M$ has empty boundary, we are interested in the Friedrichs extension of the operator
\[
S \colon C^{\infty}_{c}(M) \subset L^{2}(M) \to L^{2}(M).
\]
If $M$ has non-empty boundary, we are interested in the Neumann extension of $S$, that is, the Friedrichs extension of
\[
S \colon \{ f \in C^{\infty}_{c}(M) : \nu(f) = 0 \text{ on } \partial M \} \subset L^{2}(M) \to L^{2}(M).
\]
In any of these cases, we denote this Friedrichs extension by $S^{N}$ and its domain by $\mathcal{D}(S^{N})$. It is worth to point out that the space $H_{V}(M)$ plays the role of $\mathcal{H}_{1}$ in the discussion of the Friedrichs extension in the beginning of this section (where we consider the lower bound $c := \inf_{M}V$ for the operator).

The spectrum and the essential spectrum of $S^{N}$ are denoted by $\sigma^{N}(S)$ and $\sigma_{\ess}^{N}(S)$, respectively, and their bottoms (that is, their minimums) by $\lambda_{0}^{N}(S)$ and $\lambda_{0}^{N, \ess}(S)$, respectively. These sets and quantities for the Laplacian are denoted by $\sigma^{N}(M)$, $\sigma_{\ess}^{N}(M)$ and $\lambda_{0}^{N}(M)$, $\lambda_{0}^{N, \ess}(M)$, respectively.
If $M$ has empty boundary, we sometimes drop the superscript ``$N$" in the notation of the spectrum, the essential spectrum and their bottoms.

If $M$ has non-empty boundary, the Dirichlet extension $S^{D}$ of $S$ is the Friedrichs extension of the operator
\[
S \colon \{ f \in C^{\infty}_{c}(M) : f = 0 \text{ on } \partial M \} \subset L^{2}(M) \to L^{2}(M).
\]
We denote by $\lambda_{0}^{D}(S)$ the bottom of the spectrum of this operator. The bottom of the spectrum of the Dirichlet extension of the Laplacian is denoted by $\lambda_{0}^{D}(M)$.
In virtue of the next remark, Dirichlet extensions of Schr\"{o}dinger operators are closely related to Schr\"{o}dinger operators on non-complete manifolds without boundary.

\begin{remark}\label{Dirichlet}
If $M$ is a Riemannian manifold with non-empty boundary, then any $f \in C^{\infty}_{c}(M)$ vanishing on $\partial M$, can be approximated in $H^{1}(M)$ with smooth functions, compactly supported in the interior of $M$.
Therefore, if $S$ is a Schr\"{o}dinger operator on $M$, then the Dirichlet extension of $S$ coincides with the Friedrichs extension of $S$ viewed as an operator in the interior of $M$.
\end{remark}


We end this subsection with some already known properties of the spectrum, that will be used in the sequel. Since they will be used only for complete manifolds without boundary, we do not state them in their most general forms.

The next proposition characterizes the bottom of the spectrum of a Schr\"{o}dinger operator as the maximum of its positive spectrum, and may be found in \cite[Theorem 7]{Cheng-Yau}, \cite[Theorem 1]{MR562550} and \cite[Theorem 2.1]{MR882827}.

\begin{proposition}\label{Maximum of positive spectrum}
Let $S$ be a Schr\"{o}dinger operator on a complete Riemannian manifold $M$ without boundary. Then $\lambda_{0}(S)$ is the maximum of all $\lambda \in \mathbb{R}$, such that there exists a positive $\varphi \in C^{\infty}(M)$, with $S \varphi = \lambda \varphi$.
\end{proposition}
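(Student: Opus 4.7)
\emph{Upper bound.} The first step is to show that any $\lambda \in \mathbb{R}$ admitting a positive $\varphi \in C^{\infty}(M)$ with $S\varphi = \lambda \varphi$ satisfies $\lambda \leq \lambda_0(S)$. I would use the classical Barta-type ground-state substitution: given a nonzero $f \in C^{\infty}_c(M)$, write $f = \varphi g$, so $g = f/\varphi \in C^{\infty}_c(M)$, and expand
$$|\nabla f|^2 = g^2|\nabla\varphi|^2 + 2\varphi g \langle \nabla\varphi, \nabla g\rangle + \varphi^2 |\nabla g|^2.$$
The first two terms on the right combine as $\langle \nabla \varphi, \nabla(\varphi g^2)\rangle$; integrating by parts against the compactly supported function $\varphi g^2$ and invoking $S\varphi = \lambda\varphi$ yields the identity
$$\int_M \bigl(|\nabla f|^2 + V f^2\bigr) \;=\; \lambda \int_M f^2 + \int_M \varphi^2 |\nabla g|^2.$$
Hence the Rayleigh quotient $\mathcal{R}_S(f) \geq \lambda$ for every nonzero $f \in C^{\infty}_c(M)$, and Proposition \ref{Bottom of Friedrichs} (applied with $\mathcal{H}' = C^{\infty}_c(M) \subset H_V(M)$) gives $\lambda_0(S) \geq \lambda$. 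Completeness plays no role in this direction.

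\emph{Existence at the bottom.} Next I would produce a positive smooth eigenfunction at $\lambda_0(S)$ itself via an exhaustion argument. Fix $p_0 \in M$ and an exhaustion $K_1 \Subset K_2 \Subset \cdots$ of $M$ by relatively compact open sets with smooth boundary and $p_0 \in K_1$. On each $\overline{K_n}$ the Dirichlet spectrum of $S$ is purely discrete, and its bottom $\lambda_n$ admits a strictly positive Dirichlet eigenfunction $\varphi_n$, which I normalize by $\varphi_n(p_0) = 1$. Since $C^{\infty}_c(M) = \bigcup_n C^{\infty}_c(K_n)$, Proposition \ref{Bottom of Friedrichs} yields $\lambda_n \downarrow \lambda_0(S)$. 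For each compact $K \subset M$ and all sufficiently large $n$, $\varphi_n$ is a positive solution of the linear elliptic equation $(S - \lambda_n)\varphi_n = 0$ on a neighborhood of $K$, so Harnack's inequality, combined with $\varphi_n(p_0) = 1$, produces uniform $L^\infty(K)$-bounds on $\varphi_n$. Standard elliptic $L^p$ and Schauder estimates upgrade these to uniform $C^{2,\alpha}(K)$-bounds, and Arzel\`a-Ascoli with a diagonal argument extracts a subsequence converging in $C^{2}_{\mathrm{loc}}(M)$ to some $\varphi \in C^{\infty}(M)$ with $\varphi(p_0) = 1$, $\varphi \geq 0$ and $S\varphi = \lambda_0(S)\varphi$. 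The strong maximum principle then upgrades $\varphi \geq 0$ to $\varphi > 0$ throughout $M$.

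\emph{Main obstacle.} The delicate step is the limiting passage: one must prevent the Dirichlet ground states $\varphi_n$ from degenerating or concentrating away from $p_0$, so that the extracted limit is smooth, strictly positive and defined on all of $M$. The Harnack inequality is the crucial tool here, as it converts the single pointwise normalization $\varphi_n(p_0) = 1$ into uniform two-sided bounds on every compactum. Completeness of $M$ is used to guarantee that the exhaustion covers the whole manifold and that the $C^{2}_{\mathrm{loc}}$-limit is a genuine global eigenfunction rather than one living on a proper open subset.
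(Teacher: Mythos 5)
The paper does not provide its own proof of this proposition: it cites it directly to Cheng--Yau, Fischer-Colbrie--Schoen, and Sullivan. Your proof correctly reconstructs the standard argument used in those references, so there is nothing to compare against within the paper itself.

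Your argument is essentially right on both halves. The Barta-type ground-state substitution $f = \varphi g$ is the correct way to prove $\mathcal{R}_S(f) \geq \lambda$: the identity $g^2\|\nabla\varphi\|^2 + 2\varphi g \langle\nabla\varphi,\nabla g\rangle = \langle\nabla\varphi, \nabla(\varphi g^2)\rangle$ and one integration by parts (against the compactly supported function $\varphi g^2$, so no boundary term and no completeness needed) give exactly the claimed identity, and Proposition \ref{Bottom of Neumann} then yields $\lambda \leq \lambda_0(S)$. The exhaustion argument for existence is also the standard one: $\lambda_n \downarrow \lambda_0(S)$ because $C^\infty_c(M) = \bigcup_n C^\infty_c(K_n)$, the Harnack inequality plus the normalization $\varphi_n(p_0)=1$ controls $\varphi_n$ on every compactum (with constants uniform in $n$ since $\lambda_n$ is bounded, so the zeroth-order coefficient $V-\lambda_n$ is locally uniformly bounded), elliptic estimates upgrade this to $C^{2,\alpha}_{\mathrm{loc}}$ precompactness, and a diagonal extraction produces a nonnegative solution with $\varphi(p_0)=1$, which Harnack again forces to be strictly positive and which is smooth by bootstrapping. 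One small inaccuracy in your closing paragraph: completeness is in fact not what makes the exhaustion work or what makes the local limit a global eigenfunction --- every manifold admits a compact exhaustion, and $C^2_{\mathrm{loc}}$-convergence on an exhaustion automatically yields a solution on all of $M$. The argument you give actually proves the proposition for any Riemannian manifold without boundary; the completeness hypothesis in the statement is inherited from the cited references and is not used in this proof.
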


It is worth to point out that the positive functions involved in this proposition are not required to be square-integrable.
The next expression of the bottom of the essential spectrum follows from the Decomposition Principle \cite[Proposition 2.1]{Donnelly-Li}.

\begin{proposition}[{\cite[Proposition 3.2]{MR2891739}}]\label{Bottom of essential spectrum}
Let $S$ be a Schr\"{o}dinger operator on a complete Riemannian manifold $M$ without boundary. Let $(K_{n})_{n \in \mathbb
N}$ be an exhausting sequence of $M$ consisting of compact sets. Then the bottom of the essential spectrum of $S$ is given by
\[
\lambda_{0}^{\ess}(S) = \lim_{n} \lambda_{0}(S,M \smallsetminus K_{n}),
\]
where $\lambda_{0}(S,M \smallsetminus K_{n})$ stands for the bottom of the spectrum of $S$ on $M \smallsetminus K_{n}$.
\end{proposition}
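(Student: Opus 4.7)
The plan is to establish both inequalities between $\lambda_0^{\ess}(S)$ and $\lim_n \lambda_0(S, M \smallsetminus K_n)$. Since $K_n \subset K_{n+1}$, every smooth function compactly supported in $M \smallsetminus K_{n+1}$ is also supported in $M \smallsetminus K_n$, so by the Rayleigh characterization the sequence $\lambda_0(S, M \smallsetminus K_n)$ is non-decreasing, and the limit exists in $\R \cup \{+\infty\}$.

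For the inequality $\lambda_0^{\ess}(S) \geq \lim_n \lambda_0(S, M \smallsetminus K_n)$, I would invoke the Decomposition Principle of Donnelly--Li \cite{Donnelly-Li}: for any compact $K \subset M$, the essential spectra of $S$ on $M$ and of $S$ on $M \smallsetminus K$ (with Dirichlet conditions on $\partial K$, equivalently the Friedrichs extension of $S$ on the open interior) coincide. Applied to $K = K_n$ this yields
\[
\lambda_0^{\ess}(S) \;=\; \lambda_0^{\ess}(S, M \smallsetminus K_n) \;\geq\; \lambda_0(S, M \smallsetminus K_n),
\]
and letting $n \to \infty$ finishes this direction. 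In particular, if $\lim_n \lambda_0(S, M \smallsetminus K_n) = +\infty$ then $\sigma_{\ess}(S)$ is empty and the claim is vacuous, so I may assume the limit is finite henceforth.

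For the opposite inequality, set $\lambda := \lim_n \lambda_0(S, M \smallsetminus K_n)$ and fix $\epsilon > 0$. The aim is to produce an infinite-dimensional subspace of $C^\infty_c(M)$ on which every non-zero element has Rayleigh quotient strictly below $\lambda + \epsilon$; by the Persson--Donnelly characterization of the essential spectrum (the interval $(-\infty, \lambda + \epsilon)$ meets $\sigma_{\ess}(S)$ precisely when such a subspace exists), this forces $\lambda_0^{\ess}(S) \leq \lambda + \epsilon$, and letting $\epsilon \to 0$ closes the argument. I build the subspace inductively with pairwise disjoint compact supports: once $f_1, \dots, f_k$ have been selected with Rayleigh quotient below $\lambda + \epsilon$ and mutually disjoint supports, I pick $n_{k+1}$ large enough that $\bigcup_{j \leq k} \supp f_j \subset K_{n_{k+1}}$ (possible by exhaustion and compactness of the previously chosen supports) and also $\lambda_0(S, M \smallsetminus K_{n_{k+1}}) < \lambda + \epsilon$ (possible by definition of $\lambda$), and then use the Rayleigh characterization of $\lambda_0(S, M \smallsetminus K_{n_{k+1}})$ to pick $f_{k+1} \in C^\infty_c(M \smallsetminus K_{n_{k+1}})$ with Rayleigh quotient below $\lambda + \epsilon$. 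By construction $\supp f_{k+1}$ is disjoint from the supports of $f_1, \dots, f_k$. Since the supports are pairwise disjoint, the Rayleigh quotient of any finite linear combination $\sum c_j f_j$ is a convex combination of those of the $f_j$ (with weights $c_j^2 \| f_j \|_{L^2}^2$) and hence strictly below $\lambda + \epsilon$.

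The principal obstacle is the first inequality; without the Decomposition Principle at hand, one would have to show directly that approximate eigenfunctions of $S$ associated to $\sigma_{\ess}$ can be modified, by multiplying by suitable cutoffs, to live outside any prescribed compact set while only perturbing the Rayleigh quotient arbitrarily little. This requires a Rellich-type compactness argument on a neighborhood of $\partial K_n$ together with control of the error terms $\langle \grad \psi, \grad u_n \rangle$ and $u_n \Delta \psi$ arising from the cutoff $\psi$, which is precisely the content of \cite[Proposition~2.1]{Donnelly-Li}. With that result invoked as a black box, the remainder of the proof reduces to the clean disjoint-supports construction above.
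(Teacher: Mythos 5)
Your proposal is correct. For the record, the paper does not actually prove this Proposition; it cites it from \cite{MR2891739} and remarks only that it ``follows from the Decomposition Principle.'' Your two-sided argument makes that implicit route precise: the $\geq$ inequality is exactly the Decomposition Principle ($\sigma_{\ess}$ unchanged on passing to $M \smallsetminus K_n$, together with $\lambda_0^{\ess} \geq \lambda_0$), while the $\leq$ inequality uses the Donnelly/Persson-type subspace characterization of $\sigma_{\ess}$ via the disjoint-supports construction, where disjointness makes the Rayleigh quotient of any finite combination a convex combination of the individual Rayleigh quotients. This is the standard proof of Persson's theorem, and both halves check out — including the implicit use of $K_n \subset \inte K_{n+1}$ (needed so that any compact set is eventually swallowed) and the fact that $C^{\infty}_c(M) \subset \mathcal{D}(S)$ so the constructed subspace is admissible. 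One could streamline the final step slightly by observing that your construction produces, for \emph{every} $\varepsilon>0$, a suitable subspace $G_\varepsilon$, which via the characterization applied directly at the level $\mu=\lambda$ gives $(-\infty,\lambda]\cap\sigma_{\ess}(S)\neq\emptyset$ without the extra limit $\varepsilon\to 0$; but your version is equally valid.
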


\subsection{Amenable coverings}

In this subsection, we present the definition and some basic properties of amenable coverings. A right action of a countable group $\Gamma$ on a countable set $X$ is called \textit{amenable} if there exists a $\Gamma$-invariant mean on $L^{\infty}(X)$.

Throughout the paper manifolds are assumed to be connected, unless otherwise stated. In particular, Riemannian coverings are assumed to be between connected manifolds, unless otherwise stated. For reasons that will become clear in the sequel, we must consider possibly non-connected covering spaces at some points.

Let $p \colon M_{2} \to M_{1}$ be a Riemannian covering, with $M_{2}$ possibly non-connected, that is, $M_{2}$ has countably many connected components, the restriction of $p$ on any component is a Riemannian covering over $M_{1}$, and any point of $M_{1}$ has a neighborhood that is evenly covered with respect to the restriction of $p$ on any connected component of $M_{2}$. Fix $x \in M_{1}^{\circ}$ (that is, the interior of $M_{1}$) and consider the fundamental group $\pi_{1}(M_{1})$ of $M_{1}$ with base point $x$. For $g \in \pi_{1}(M_{1})$, let $\gamma_{g} \colon [0,1] \to M_{1}$ be a representative loop based at $x$. For $y \in p^{-1}(x)$, lift $\gamma_{g}$ to a path $\tilde{\gamma}_{g}$, with $\tilde{\gamma}_{g}(0) = y$. We define $y \cdot g := \tilde{\gamma}_{g}(1)$. In this way, we obtain a right action of $\pi_{1}(M_{1})$ on $p^{-1}(x)$. The covering $p$ is called \textit{amenable} if this right action is amenable.

This definition coincides with the definition presented in \cite{BMP2,BMP1,Mine} in terms of the right cosets of $\pi_{1}(M_{2})$ in $\pi_{1}(M_{1})$, when $M_{2}$ is connected. However, this definition allows us to extend the notion of amenable coverings in case $M_{2}$ is non-connected. 

For instance, consider a Riemannian covering $p \colon  M_{2} \to M_{1}$, where $M_{2}$ has countably many connected components $M_{2}^{(n)}$, $n \in \mathbb{N}$. If the restriction $p \colon M_{2}^{(n)} \to M_{1}$ is amenable, for some $n \in \mathbb{N}$, then the covering $p \colon M_{2} \to M_{1}$ is amenable. Indeed, if there exists a $\pi_{1}(M_{1})$-invariant mean $\mu_{n}$ on $L^{\infty}(p^{-1}(x) \cap M_{2}^{(n)})$, for some $n \in \mathbb{N}$, then the linear functional $\mu \colon L^{\infty}(p^{-1}(x)) \to \mathbb{R}$, defined by
\[
\mu (f) := \mu_{n} (f|_{p^{-1}(x) \cap M_{2}^{(n)}}),
\]
for any $f \in L^{\infty}(p^{-1}(x))$, is a $\pi_{1}(M_{1})$-invariant mean on $L^{\infty}(p^{-1}(x))$.
However, the covering $p \colon M_{2} \to M_{1}$ may be amenable, even when the restriction $p \colon M_{2}^{(n)} \to M_{1}$ is non-amenable, for any $n \in \mathbb{N}$.


The following characterization of amenable coverings follows from F\o{}lner's criterion (cf. \cite[Section 2]{MR3104995}).

\begin{proposition}\label{Folner}
The covering $p$ is amenable if and only if for any finite $G \subset \pi_{1}(M_{1})$ and $\varepsilon > 0$, there exists a finite $F \subset p^{-1}(x)$, such that
\[
\#(F \smallsetminus Fg) < \varepsilon \#(F),
\]
for all $g \in G$.
\end{proposition}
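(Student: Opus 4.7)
The statement is F{\o}lner's criterion applied to the right action of $\pi_{1}(M_{1})$ on the countable set $p^{-1}(x)$, so the plan is to adapt the classical proof of F{\o}lner's criterion for amenable group actions.

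For the implication ``F{\o}lner $\Rightarrow$ amenable'', I would argue by a Banach--Alaoglu compactness argument. For each pair $(G, \varepsilon)$ with $G \subset \pi_{1}(M_{1})$ finite and $\varepsilon > 0$, pick a witnessing finite set $F_{G,\varepsilon} \subset p^{-1}(x)$ and define a mean $\mu_{G,\varepsilon}$ on $L^{\infty}(p^{-1}(x))$ by averaging over $F_{G,\varepsilon}$. These means form a net in the unit ball of $(L^{\infty}(p^{-1}(x)))^{*}$, which is weak-$*$ compact by Banach--Alaoglu. Any cluster point $\mu$ is automatically a mean, and the F{\o}lner condition gives the estimate $|\mu_{G,\varepsilon}(f) - \mu_{G,\varepsilon}(f \cdot g)| \leq 2 \varepsilon \| f \|_{\infty}$ for $g \in G$, from which $\pi_{1}(M_{1})$-invariance of $\mu$ follows by direction the net so that $G$ eventually contains any fixed element and $\varepsilon \to 0$.

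The implication ``amenable $\Rightarrow$ F{\o}lner'' is the Day--Namioka argument. Starting from a $\pi_{1}(M_{1})$-invariant mean $\mu$, I would first approximate $\mu$ in the weak-$*$ topology by normalized positive elements of $\ell^{1}(p^{-1}(x))$, which is possible because such functions are weak-$*$ dense in the space of means. For a fixed finite $G \subset \pi_{1}(M_{1})$, $\pi_{1}(M_{1})$-invariance of $\mu$ forces the differences $f_{\alpha} - f_{\alpha} \cdot g$ to converge weakly to $0$ in $\ell^{1}$ for each $g \in G$. Mazur's theorem then lets me pass to finite convex combinations and upgrade this to norm convergence: for any $\delta > 0$ there exists a positive $f \in \ell^{1}(p^{-1}(x))$ with $\| f \|_{1} = 1$ and $\| f - f \cdot g \|_{1} < \delta$ for every $g \in G$. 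Finally, a layer-cake chopping argument extracts the F{\o}lner set: writing $f = \int_{0}^{\infty} \mathbf{1}_{\{f > t\}} \, dt$ yields
\[
\| f - f \cdot g \|_{1} = \int_{0}^{\infty} \#\bigl(\{f > t\} \smallsetminus \{f \cdot g > t\}\bigr) \, dt,
\]
while $\| f \|_{1} = \int_{0}^{\infty} \#\{f > t\} \, dt$. Choosing $\delta < \varepsilon$ and applying the mean value principle, I can pick a level $t > 0$ such that $F := \{f > t\}$ is finite, non-empty, and satisfies $\#(F \smallsetminus Fg) < \varepsilon \#F$ for every $g \in G$, which is the desired F{\o}lner set.

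The main obstacle is the second direction, and specifically the transition from an abstract invariant mean to a combinatorial F{\o}lner set. The delicate ingredient is the use of Mazur's theorem to convert weak invariance of the $\ell^{1}$ approximants into norm invariance; the final layer-cake selection is then a routine averaging. All of this is purely functional-analytic and does not interact with the Riemannian structure of $p$, so no modification of the classical argument is required and a reference to \cite[Section 2]{MR3104995} suffices.
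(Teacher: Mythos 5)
The paper does not actually prove Proposition~\ref{Folner}; it merely records that the statement follows from F\o{}lner's criterion for amenable group actions and points to \cite[Section 2]{MR3104995}. Your proposal reconstructs the standard proof of that criterion, and it is essentially correct. The Banach--Alaoglu cluster-point argument for the forward implication, and the Day--Namioka argument (weak-$*$ density of normalized nonnegative $\ell^{1}$ vectors among means, Mazur's theorem to upgrade weak $\ell^{1}$-convergence of $f_{\alpha}-f_{\alpha}\cdot g$ to norm convergence, then layer-cake extraction) for the converse, are exactly the classical route. You are also right that nothing in the argument touches the Riemannian structure of $p$: only the right action of $\pi_{1}(M_{1})$ on the countable fibre $p^{-1}(x)$ enters, and neither transitivity nor freeness of that action is required.

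Two small slips in the sketch are worth flagging, neither fatal. First, the layer-cake identity should read
\[
\| f - f\cdot g \|_{1} \;=\; \int_{0}^{\infty} \#\bigl(\{f > t\}\,\triangle\,\{f\cdot g > t\}\bigr)\,dt \;=\; 2\int_{0}^{\infty} \#\bigl(\{f > t\}\smallsetminus\{f\cdot g > t\}\bigr)\,dt,
\]
the last equality holding because $y\mapsto y\cdot g$ is a bijection of $p^{-1}(x)$, so the two halves of the symmetric difference have equal cardinality; this only strengthens your estimate. Second, and more substantively, to produce a single level $t$ that works simultaneously for every $g\in G$ you must sum the layer-cake estimates over $g\in G$ before applying the mean-value (pigeonhole) step, which forces $\delta$ to be taken on the order of $\varepsilon/\#G$ rather than merely $\delta < \varepsilon$; otherwise each $g$ gives its own level $t$. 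With that adjustment the argument closes, and the resulting proof is a correct, self-contained replacement for the citation.
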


In particular, the covering is amenable if and only if the right action of any finitely generated subgroup of $\pi_{1}(M_{1})$ on $p^{-1}(x)$ is amenable. For a smoothly bounded, compact and connected neighborhood $K$ of $x$, we denote by $i_{*}\pi_{1}(K)$ the image of the fundamental group of $K$ in $\pi_{1}(M_{1})$.
It is clear that $p \colon  p^{-1}(K) \to K$ is a Riemannian covering of manifolds with boundary, where $p^{-1}(K)$ is possibly non-connected.
Evidently, the covering $p \colon p^{-1}(K) \to K$ is amenable if and only if the right action of $i_{*}\pi_{1}(K)$ on $p^{-1}(x)$ is amenable.

\begin{proposition}\label{amenability domains}
The covering $p \colon M_{2} \to M_{1}$ is amenable if and only if the covering $p \colon p^{-1}(K) \to K$ is amenable, for any smoothly bounded, compact and connected neighborhood $K$ of $x$.
\end{proposition}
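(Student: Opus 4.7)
The plan is to prove both implications using the F\o{}lner criterion (Proposition \ref{Folner}) together with the observation, made in the text just before the statement, that amenability of $p\colon p^{-1}(K)\to K$ is equivalent to amenability of the right action of $i_{*}\pi_{1}(K)$ on $p^{-1}(x)$. The crucial structural fact is that $\pi_{1}(M_{1})$ is the directed union of the subgroups $i_{*}\pi_{1}(K)$ as $K$ ranges over smoothly bounded, compact, connected neighborhoods of $x$, and that the right action of any $g\in i_{*}\pi_{1}(K)$ on $p^{-1}(x)$ does not depend on whether we regard $g$ as an element of $\pi_{1}(K)$ (acting via the covering $p\colon p^{-1}(K)\to K$) or of $\pi_{1}(M_{1})$ (acting via $p\colon M_{2}\to M_{1}$), since the latter action is defined by lifting the same representative loop.

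For the forward direction, assume $p\colon M_{2}\to M_{1}$ is amenable, so that there exists a $\pi_{1}(M_{1})$-invariant mean $\mu$ on $L^{\infty}(p^{-1}(x))$. For any $K$ as above, the subgroup $i_{*}\pi_{1}(K)\subset\pi_{1}(M_{1})$ also fixes $\mu$, so $\mu$ witnesses the amenability of the action of $i_{*}\pi_{1}(K)$ on $p^{-1}(x)$, and hence of the covering $p\colon p^{-1}(K)\to K$.

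For the converse, I would use F\o{}lner. Let a finite set $G\subset\pi_{1}(M_{1})$ and $\varepsilon>0$ be given. Choose representative loops $\gamma_{g}\colon[0,1]\to M_{1}$ based at $x$, one for each $g\in G$. Their union together with $\{x\}$ is a compact connected subset of $M_{1}^{\circ}$, so it is contained in some smoothly bounded, compact, connected neighborhood $K$ of $x$ (take a suitable precompact open neighborhood of this set and smooth its boundary; since $M_{1}$ is connected we may enlarge $K$ within $M_{1}$ to remain connected). By construction each loop $\gamma_{g}$ lies in $K$, hence $G\subset i_{*}\pi_{1}(K)$. By hypothesis, $p\colon p^{-1}(K)\to K$ is amenable, so by Proposition \ref{Folner} applied to the covering $p\colon p^{-1}(K)\to K$, there exists a finite $F\subset p^{-1}(x)$ with
\[
\#(F\smallsetminus Fg)<\varepsilon\,\#(F)\qquad\text{for all }g\in G.
\]
By the compatibility of actions noted above, this is exactly the F\o{}lner condition for $p\colon M_{2}\to M_{1}$ on the set $G$, so Proposition \ref{Folner} yields amenability of $p\colon M_{2}\to M_{1}$.

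The only non-formal point is the step of enclosing finitely many loops in a smoothly bounded compact connected neighborhood of $x$, which I expect to be the main (minor) obstacle, but it is a standard manifold-topology argument: the union of the loops and $\{x\}$ is compact and connected in $M_{1}^{\circ}$, and any such subset of a connected manifold admits an open precompact connected neighborhood with smooth boundary contained in $M_{1}^{\circ}$ (for instance as a sublevel set of a suitable smooth exhaustion function, or by thickening and smoothing). Once $K$ is constructed, everything reduces to matching the F\o{}lner conditions for the two coverings.
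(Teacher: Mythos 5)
Your proof is correct and follows essentially the same route as the paper: reduce via F\o{}lner to the claim that any finite $G \subset \pi_1(M_1)$ lies in $i_*\pi_1(K)$ for a suitable $K$, then build $K$ by enclosing the representative loops (all based at $x$, hence their union is connected) in a smoothly bounded compact connected neighborhood. The only difference is that the paper makes your ``thickening and smoothing'' step precise by taking a bump function $\chi$ equal to $1$ on the union of the loops and invoking Sard's theorem to pick a regular superlevel set $\{\chi \geq t\}$, whose connected component containing $x$ serves as $K$.
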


\begin{proof}
From Proposition \ref{Folner}, it suffices to prove that for any finite subset $G$ of $\pi_{1}(M_{1})$, there exists a smoothly bounded, compact and connected neighborhood $K$ of $x$, such that $G \subset i_{*}\pi_{1}(K)$. Let $G$ be a finite subset of $\pi_{1}(M_{1})$ and consider a representative loop $\gamma_{g} \colon [0,1] \to M_{1}^{\circ}$, for each $g \in G$. Let $C$ be the union of the images of these loops and let $U$ be a relatively compact, open neighborhood of $C$ that does not intersect the boundary of $M$ (if non-empty). Consider $\chi \in C^{\infty}(M_{1})$, with $0 \leq \chi \leq 1$, $\chi = 1$ in $C$ and $\supp \chi \subset U$. From Sard's Theorem, it follows that for almost any $t \in (0,1)$, the level set $\{ \chi = t \}$ is a smooth hypersurface of $M_{1}$. Consider such a $t$, and the smoothly bounded, compact set $K^{\prime} := \{ \chi \geq t \}$. Then for the connected component $K$ of $K^{\prime}$ containing $x$, we have $G \subset i_{*}\pi_{1}(K)$. \qed
\end{proof}

\subsection{Manifolds with Ricci curvature bounded from below}

In this subsection we recall the main result of \cite{BMP2} and point out that its proof, with some slight modifications, establishes this result for possibly non-connected covering spaces.

A non-connected Riemannian manifold $M$ is complete if all of its connected components are complete. The distance of points of different connected components of $M$ is considered to be infinite. In particular, any bounded subset of $M$ is contained in a connected component of $M$.


\begin{theorem}[{{\cite[Theorem 4.1]{BMP2}}}]\label{weak converse}
Let $p \colon M_{2} \to M_{1}$ be a Riemannian covering, with $M_{2}$ possibly non-connected. Assume that $M_{1}$ is complete, without boundary, and with Ricci curvature bounded from below. Let $S_{1} := \Delta + V$ be a Schr\"{o}dinger operator on $M_{1}$, with $V$ and $ \grad V $ bounded, and let $S_{2}$ be its lift on $M_{2}$. If $\lambda_{0}(S_{2}) = \lambda_{0}(S_{1}) \neq \lambda_{0}^{\ess}(S_{1})$, then the covering is amenable.
\end{theorem}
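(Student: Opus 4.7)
My plan is to recover the theorem from the original connected-base result \cite[Theorem 4.1]{BMP2} by inspecting its proof and verifying that only bookkeeping changes are needed to accommodate a possibly non-connected covering space. Decompose $M_{2} = \bigsqcup_{n} M_{2}^{(n)}$ into connected components. The Friedrichs extension $S_{2}$ splits as the direct sum of the Friedrichs extensions $S_{2}^{(n)}$ on each $M_{2}^{(n)}$, so $\lambda_{0}(S_{2}) = \inf_{n} \lambda_{0}(S_{2}^{(n)})$; the right action of $\pi_{1}(M_{1})$ on $p^{-1}(x)$ correspondingly decomposes into orbits $p^{-1}(x) \cap M_{2}^{(n)}$, one per component.

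Since $\lambda_{0}(S_{1}) \notin \sigma_{\ess}(S_{1})$, the bottom is an isolated eigenvalue, and Proposition \ref{Maximum of positive spectrum} supplies a positive smooth $\lambda_{0}(S_{1})$-eigenfunction $\varphi$ on $M_{1}$. Its lift $\tilde{\varphi} := \varphi \circ p$ is then a positive smooth eigenfunction of $S_{2}$ on each component. I would then invoke the $\tilde{\varphi}$-renormalization used in \cite{BMP2}: the shifted operator $S_{2} - \lambda_{0}(S_{1})$ is unitarily equivalent, on the weighted space $L^{2}(M_{2}, \tilde{\varphi}^{2})$, to a drifted Laplacian $L$ with $\lambda_{0}(L) = \lambda_{0}(S_{2}) - \lambda_{0}(S_{1}) = 0$. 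The Rayleigh characterization then produces a sequence $(f_{n}) \subset \Lip_{c}(M_{2})$ whose Rayleigh quotients for $L$ tend to zero.

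The heart of the proof of \cite[Theorem 4.1]{BMP2} is a Brooks--Sullivan-type argument that converts such a sequence $(f_{n})$ into a sequence of finite subsets $F_{n} \subset p^{-1}(x)$ satisfying the F\o{}lner condition of Proposition \ref{Folner}. This construction uses the Ricci lower bound on $M_{1}$, together with the uniform bounds on $V$ and $\| \grad V \|$, to control how $f_{n}$ distributes along the fibers of $p$; all of these data pull back uniformly to every component of $M_{2}$, so the local estimates go through unchanged. Once the $F_{n}$ are obtained, Proposition \ref{Folner} yields amenability of $p$.

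The only genuinely new point, and the main obstacle to be addressed, is that each compactly supported $f_{n}$ sits inside a single component $M_{2}^{(k_{n})}$, so the extracted $F_{n}$ lies in a single orbit $p^{-1}(x) \cap M_{2}^{(k_{n})}$, and this orbit may vary with $n$. This causes no difficulty: the F\o{}lner criterion of Proposition \ref{Folner} asks for finite subsets of the entire set $p^{-1}(x)$, with no restriction on which orbits they meet, so $(F_{n})$ remains a valid F\o{}lner sequence for the full $\pi_{1}(M_{1})$-action. The remaining task is to confirm that no step of the \cite{BMP2} construction secretly uses global connectedness of $M_{2}$, which is clear since every local estimate in that argument is confined to the support of a single test function $f_{n}$.
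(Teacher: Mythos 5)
Your overall strategy matches the paper's: Theorem \ref{weak converse} is proved by carefully re-running the argument of \cite[Theorem 4.1]{BMP2} and observing that the few places that could depend on connectedness of $M_{2}$ localize harmlessly. You also correctly identify that the F\o{}lner criterion (Proposition \ref{Folner}) is insensitive to which $\pi_{1}(M_{1})$-orbit inside $p^{-1}(x)$ the finite set $F$ happens to meet, so it suffices to produce $F$ inside a single connected component of $M_{2}$.

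However, your identification of the precise structural fact that makes the localization work is off. You assert that ``each compactly supported $f_{n}$ sits inside a single component $M_{2}^{(k_{n})}$.'' That is false in general: a compactly supported Lipschitz function on $M_{2} = \bigsqcup_{n} M_{2}^{(n)}$ can be nonzero on finitely many components at once, and neither the Rayleigh characterization nor the renormalization forces its support into one piece. What actually confines the argument to a single component is a different object: the \emph{bounded domains} $A$ that witness $h_{\tilde{\varphi}}(M_{2}) = 0$ in the modified-Cheeger part of the BMP2 proof. A domain is by definition connected, hence contained in one component of $M_{2}$; that is the observation the paper makes in Lemma \ref{aux1}, and it is what allows the Harnack/volume estimates of \cite[Lemma 3.1]{BMP2} (and then \cite[Lemma 4.5]{BMP2}, extended here as Lemma \ref{aux2}) to be carried out unchanged on that single component before the F\o{}lner set $F = \Phi^{-1}(p^{-1}(z) \cap A)$ is extracted. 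If you pursue your version literally, the step from $f_{n}$ to a single-orbit $F_{n}$ does not go through; you would need to pass, as the paper does, from the minimizing sequence to the vanishing of the modified Cheeger constant, and only then exploit connectedness of the domains $A$. This is a repairable imprecision rather than a wrong route, but it is precisely the step whose verification you flagged as the ``remaining task'' without actually carrying it out.
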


We begin with some definitions and remarks from \cite{BMP2}.
Let $M$ be a possibly non-connected Riemannian manifold without boundary. A positive $\varphi \in C^{\infty}(M)$ \textit{satisfies a Harnack estimate} if there exists a constant $c_{\varphi} \geq 1$, such that
\[
\sup_{B(x,r)}\varphi^{2} \leq c_{\varphi}^{r+1} \inf_{B(x,r)}\varphi^{2},
\]
for all $x \in M$ and $r>0$. 
Assume that $M$ is complete, with Ricci curvature bounded from below, and let $S = \Delta + V$ be a Schr\"{o}dinger operator on $M$, with $V$ and $ \grad V$ bounded. From \cite[Theorem 6]{Cheng-Yau}, if a positive function $\varphi \in C^{\infty}(M)$ satisfies $S\varphi = \lambda \varphi$, for some $\lambda \in \mathbb{R}$, then $\varphi$ satisfies a Harnack estimate.


The \textit{modified Cheeger's constant} of $M$ is defined as
\[
h_{\varphi}(M) := \inf_{A} \frac{\int_{\partial A} \varphi^{2}}{\int_{A} \varphi^{2}},
\]
where the infimum is taken over all bounded domains $A$ of $M$ with smooth boundary.

\begin{lemma}\label{aux1}
Let $M$ be a possibly non-connected, complete Riemannian manifold, without boundary and with Ricci curvature bounded from below. Let $\varphi \in C^{\infty}(M)$ be a positive function, which satisfies a Harnack estimate. If $h_{\varphi}(M) = 0$, then for any $\varepsilon, r  > 0$, there exists a bounded open subset $A$ of $M$, such that
\[
\int_{A^{r} \smallsetminus A} \varphi^{2} < \varepsilon \int_{A} \varphi^{2},
\]
where $A^{r}:=\{ y \in M : d(y,A) < r \}$.
\end{lemma}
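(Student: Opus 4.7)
My strategy is to reduce the lemma to a single quantitative ``collar estimate'',
\[
\int_{A^{r} \smallsetminus A} \varphi^{2} \;\leq\; C(r)\, \int_{\partial A} \varphi^{2}, \qquad (\star)
\]
valid for every bounded domain $A$ with smooth boundary, where $C(r)$ depends only on $r$, on the Harnack constant $c_{\varphi}$ of $\varphi$, and on the Ricci lower bound on $M$. Once $(\star)$ is in hand, the hypothesis $h_{\varphi}(M) = 0$ supplies, for any prescribed $\delta > 0$, a bounded smooth domain $A$ with $\int_{\partial A} \varphi^{2} < \delta \int_{A} \varphi^{2}$; taking $\delta = \varepsilon / C(r)$ and feeding this into $(\star)$ gives the conclusion of the lemma.

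To prove $(\star)$, I would apply the coarea formula to the 1-Lipschitz function $f(x) := d(x, \overline{A})$, which satisfies $|\nabla f| = 1$ almost everywhere on $M \smallsetminus \overline{A}$, and get
\[
\int_{A^{r} \smallsetminus A} \varphi^{2} \;=\; \int_{0}^{r} \int_{\{f = t\}} \varphi^{2} \, d\mathcal{H}^{n-1} \, dt .
\]
For almost every $t \in (0, r)$, each point of $\{f = t\}$ lies at the end of a minimizing geodesic of length $t$ meeting $\partial A$ perpendicularly, so $\{f = t\}$ is (up to a null set) contained in the image of the outward normal exponential map $\psi_{t}(y) := \exp_{y}(t\,\nu(y))$, $y \in \partial A$. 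Performing the change of variables via $\psi_{t}$, two ingredients control the inner integral: (i) the Jacobian estimate $|J\psi_{t}|(y) \leq F(t)$, coming from standard Jacobi field / Heintze--Karcher comparison under the Ricci lower bound; and (ii) the Harnack estimate
\[
\varphi^{2}(\psi_{t}(y)) \;\leq\; c_{\varphi}^{t+1}\, \varphi^{2}(y),
\]
which holds because $\psi_{t}(y) \in \overline{B(y, t)}$. Combining these two pointwise bounds over $\partial A$ and integrating in $t$ gives $(\star)$ with $C(r) = \int_{0}^{r} c_{\varphi}^{t+1} F(t) \, dt$.

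\textbf{Main obstacle.} The uniformity of the Jacobian estimate is the delicate point, since Heintze--Karcher a priori also involves the mean curvature of $\partial A$, over which we have no control as $A$ varies. I would handle this by first replacing $A$ with a slight inward perturbation, obtained as a superlevel set $\{\rho \geq s\}$ for $\rho$ a smooth mollification of $d(\cdot, M \smallsetminus A)$ and $s > 0$ a generic Sard value; such a replacement has mean curvature controlled independently of $A$, and changes both $\int_{A}\varphi^{2}$ and $\int_{\partial A}\varphi^{2}$ by an arbitrarily small multiplicative amount. The potential non-connectedness of $M$ is harmless: any bounded domain $A$ is contained in a single connected component of $M$, on which the completeness, Ricci bound, and Harnack hypothesis all apply verbatim.
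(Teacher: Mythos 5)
Your overall strategy --- reduce to a collar estimate via the coarea formula, control the level-set areas of the outward distance function by comparison geometry, and absorb the oscillation of $\varphi$ by the Harnack estimate --- is the right framework, and it is indeed the spirit of \cite[Lemma 3.1]{BMP2}, to which the paper's own proof of this lemma reduces in two lines (renormalize the metric so $\mathrm{Ric}_M\geq 1-m$, choose $A$ per estimate (3.2) of \cite{BMP2}, note that $A$ sits in a single component, and run the proof of \cite[Lemma 3.1]{BMP2} there). Where your proposal breaks is the claimed collar estimate $(\star)$: there is \emph{no} constant $C(r)$ depending only on $r$, $c_{\varphi}$ and the Ricci lower bound for which $\int_{A^r\smallsetminus A}\varphi^2\leq C(r)\int_{\partial A}\varphi^2$ holds for every smooth bounded domain $A$. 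Already in $M=\mathbb{R}^n$ with $\varphi\equiv 1$ (so $c_{\varphi}=1$ and $\mathrm{Ric}=0$), taking $A=B(0,\rho)$ gives a ratio $\int_{A^r\smallsetminus A}\varphi^2\big/\int_{\partial A}\varphi^2\sim r^n/(n\rho^{n-1})\to\infty$ as $\rho\to 0$; and the same phenomenon survives even among domains of arbitrarily small modified Cheeger quotient (adjoin, to a large ball, a long thin tentacle, or many far-away tiny balls), so the scheme ``prove $(\star)$ for all $A$, then feed in $h_{\varphi}(M)=0$'' cannot work, no matter how one estimates the Jacobian.

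The proposed repair --- mollify $d(\cdot,M\smallsetminus A)$ and take a generic superlevel set --- does not rescue the Jacobian bound either: smoothing a $1$-Lipschitz function at scale $\eta$ controls $|\nabla^2\rho|\lesssim 1/\eta$, but the second fundamental form of $\{\rho=s\}$ is (up to tangential projection) $\nabla^2\rho/|\nabla\rho|$, and Sard's theorem supplies no lower bound on $|\nabla\rho|$ along a regular level set; moreover, requiring the modification to change $\int_A\varphi^2$ and $\int_{\partial A}\varphi^2$ only by a small multiplicative factor forces $\eta$ to be small compared to the (uncontrolled) fine geometry of $A$, which is incompatible with a fixed curvature bound. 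The mean-curvature dependence must be removed differently: one should exploit the Laplacian comparison for the distance to the \emph{closed set} $\overline{A}$ rather than the Heintze--Karcher bound off the hypersurface $\partial A$ --- the barrier inequality $\Delta d_{\overline{A}}\leq(n-1)\kappa\coth(\kappa\,d_{\overline{A}})$ holds at every exterior point regardless of the shape of $A$, since $d_y$ is an upper barrier for $d_{\overline{A}}$ at any foot point $y\in\overline{A}$. This yields a uniform comparison between $\{d_{\overline{A}}=t\}$ and $\{d_{\overline{A}}=s\}$ for $0<s<t$, but it degenerates as $s\to 0^+$; one is then forced to replace $A$ by an exterior parallel set $A^s$ and to produce, via the coarea formula and the hypothesis $h_{\varphi}(M)=0$ with a quantitative choice of the Cheeger bound, a good starting level $s$. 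That selection step is precisely what is missing from your outline, and it is what the cited estimate (3.2) and the proof of \cite[Lemma 3.1]{BMP2} supply.
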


\begin{proof}
We may renormalize the Riemannian metric of $M$, so that $\text{Ric}_{M} \geq 1 -m$, where $m$ is the dimension of $M$. Since $h_{\varphi}(M) = 0$, for any $\varepsilon, r > 0$, there exists a non-empty, bounded domain $A$ of $M$ satisfying the estimate (3.2) of \cite{BMP2}. Evidently, $A$ is contained in a connected component of $M$ and the arguments of the proof of \cite[Lemma 3.1]{BMP2} can be carried out in this connected component of $M$, establishing the asserted claim. \qed
\end{proof}


\begin{lemma}\label{aux2}
In the setting of Theorem \ref{weak converse}, there exists a compact set $K \subset M_{1}$, such that for any $\varepsilon,r>0$, there exists $z \in K$ and a bounded open subset $A$ of $M_{2}$, such that
\[
\#(p^{-1}(z) \cap (A^{r} \smallsetminus A)) < \varepsilon \#(p^{-1}(z) \cap A).
\]
\end{lemma}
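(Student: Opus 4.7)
The strategy is to apply Lemma \ref{aux1} to a positive $\lambda_{0}(S_{2})$-eigenfunction $\varphi_{2}$ on $M_{2}$, obtained by lifting an $L^{2}$ ground state of $S_{1}$, and then convert the resulting $\varphi_{2}^{2}$-weighted isoperimetric inequality into a fiber-counting inequality by ``saturating'' over a carefully chosen evenly covered compact neighborhood $K \subset M_{1}$.

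First, since $\lambda_{0}(S_{1}) \notin \sigma_{\ess}(S_{1})$, it is an isolated eigenvalue of finite multiplicity of $S_{1}$ admitting a positive $L^{2}$ ground state $\varphi_{1}$, and the lift $\varphi_{2} := \varphi_{1} \circ p$ is a positive smooth function on $M_{2}$ with $S_{2}\varphi_{2} = \lambda_{0}(S_{2})\varphi_{2}$ by the hypothesis $\lambda_{0}(S_{2}) = \lambda_{0}(S_{1})$. Proposition \ref{Maximum of positive spectrum} applied to each connected component of $M_{2}$ shows that $\varphi_{2}$ is a principal positive eigenfunction of $S_{2}$, and \cite[Theorem 6]{Cheng-Yau} yields a Harnack estimate for $\varphi_{2}$ with constant $c_{\varphi}$ depending only on the data. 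For any compactly supported smooth $f$ on $M_{2}$, writing $f = \varphi_{2}\eta$ and integrating by parts gives
\[
\mathcal{R}_{S_{2}}(f) \;=\; \lambda_{0}(S_{2}) \;+\; \frac{\int_{M_{2}} \varphi_{2}^{2} \|\grad \eta\|^{2}}{\int_{M_{2}} \varphi_{2}^{2} \eta^{2}},
\]
so, taking infima and using the Rayleigh characterization of $\lambda_{0}(S_{2})$, the infimum of the weighted Dirichlet ratio of $\eta$ is $0$. A weighted Cheeger inequality then forces $h_{\varphi_{2}}(M_{2}) = 0$, and Lemma \ref{aux1} applies to $\varphi_{2}$ on $M_{2}$.

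Next, I would pick $x_{0} \in M_{1}^{\circ}$ and $\rho_{0} > 0$ so that $B(x_{0},2\rho_{0})$ is evenly covered by $p$, set $K := \overline{B(x_{0},\rho_{0})}$ and $m_{K} := \int_{K}\varphi_{1}^{2} > 0$, and let $\{\tilde{K}_{\alpha}\}_{\alpha \in I}$ be the connected components of $p^{-1}(K)$, each isometric to $K$. Given $\varepsilon, r > 0$, apply Lemma \ref{aux1} with parameters $r' := r + 4\rho_{0}$ and $\varepsilon'$ small (depending on $\varepsilon$, $\rho_{0}$, $c_{\varphi}$, and the auxiliary constant discussed below) to obtain a bounded open $A_{0} \subset M_{2}$ with $\int_{A_{0}^{r'} \smallsetminus A_{0}} \varphi_{2}^{2} < \varepsilon' \int_{A_{0}} \varphi_{2}^{2}$. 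Let $J := \{\alpha \in I : A_{0} \cap \tilde{K}_{\alpha} \neq \emptyset\}$, a finite set since $A_{0}$ is bounded, and define the saturated set $A := \bigcup_{\alpha \in J} \tilde{K}_{\alpha}^{\circ}$, where $\tilde{K}_{\alpha}^{\circ}$ is the preimage of $B(x_{0},\rho_{0})$ in the sheet $\tilde{K}_{\alpha}$. For any $z \in B(x_{0},\rho_{0})$, a direct check gives $\#(p^{-1}(z) \cap A) = |J|$ and $\int_{A} \varphi_{2}^{2} = |J| m_{K}$; and a triangle-inequality argument shows that every sheet $\tilde{K}_{\beta}$ with $\beta \notin J$ satisfying $d(\tilde{K}_{\beta},\tilde{K}_{\alpha}) < r$ for some $\alpha \in J$ is in fact contained in $A_{0}^{r'} \smallsetminus A_{0}$. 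Summing over such $\beta$ (which bound $\#(p^{-1}(z) \cap (A^{r} \smallsetminus A))$ uniformly in $z \in B(x_{0},\rho_{0})$) yields
\[
\#(p^{-1}(z) \cap (A^{r} \smallsetminus A)) \cdot m_{K} \;\leq\; \int_{A_{0}^{r'} \smallsetminus A_{0}} \varphi_{2}^{2} \;<\; \varepsilon' \int_{A_{0}} \varphi_{2}^{2}.
\]

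The main obstacle is the last step: to conclude $\#(p^{-1}(z) \cap (A^{r} \smallsetminus A)) < \varepsilon |J| = \varepsilon \#(p^{-1}(z) \cap A)$, one must bound $\int_{A_{0}} \varphi_{2}^{2}$ by $C |J| m_{K}$ for an absolute constant $C$, and then choose $\varepsilon' := \varepsilon / C$. Arranging this requires a careful refinement of the set $A_{0}$ produced by Lemma \ref{aux1}, exploiting the Harnack estimate (so that each sheet $\tilde{K}_{\alpha}$, $\alpha \in J$, contributes a mass comparable to $m_{K}$) together with the $L^{2}$-integrability of $\varphi_{1}$ (to control the tail of $A_{0}$ far from $p^{-1}(K)$, where $\varphi_{2}^{2}$ has negligible mass relative to the sheet contributions). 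Granting this refinement, the desired $z$ may be taken in $B(x_{0},\rho_{0}) \subset K$ and $A$ is the bounded open subset of $M_{2}$ realizing the conclusion.
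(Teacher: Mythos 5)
Your overall strategy --- apply Lemma \ref{aux1} to the lifted ground state $\tilde{\varphi} := \varphi_{1}\circ p$ and then convert the resulting $\tilde{\varphi}^{2}$-weighted isoperimetric inequality into fiber-counting --- is the right starting point and matches the spirit of the paper's argument (which simply invokes \cite[Lemma 4.5]{BMP2} after noting that Lemma \ref{aux1} has been extended to the non-connected setting). The setup up through the verification that $h_{\tilde{\varphi}}(M_{2})=0$ and the application of Lemma \ref{aux1} is sound. However, I see a genuine gap in the second half.

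The core problem is the conversion step, and you correctly locate it yourself: to close the argument you need $\int_{A_{0}}\tilde{\varphi}^{2}\leq C\,|J|\,m_{K}$, with $C$ independent of $\varepsilon$, $r$, $A_{0}$. You assert this can be arranged by ``exploiting the Harnack estimate... together with the $L^{2}$-integrability of $\varphi_{1}$ to control the tail of $A_{0}$ far from $p^{-1}(K)$,'' but this reasoning does not hold up. Writing $\int_{A_{0}}\tilde{\varphi}^{2}=\int_{M_{1}}\#\bigl(p^{-1}(z)\cap A_{0}\bigr)\varphi_{1}(z)^{2}\,dz$, the $L^{2}$-integrability of $\varphi_{1}$ on $M_{1}$ controls the total $\varphi_{1}^{2}$-mass of $M_{1}$, not the fiber-counted mass of $A_{0}$ over $M_{1}\smallsetminus K$: the counting function $\#\bigl(p^{-1}(z)\cap A_{0}\bigr)$ could be arbitrarily large on $M_{1}\smallsetminus K$ even while $|J|$ stays small. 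In fact Lemma \ref{aux1} gives no control at all over where $A_{0}$ sits in $M_{2}$; it could even satisfy $A_{0}\cap p^{-1}(K)=\emptyset$, giving $J=\emptyset$, $A=\emptyset$, and a vacuous conclusion. So the ``refinement'' you defer is not a technical cleanup --- it is the essential point of the lemma, and the Harnack estimate and square-integrability of $\varphi_{1}$ alone do not supply it.

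This is also where your choice of $K$ diverges in an important way from the paper's. You take $K$ to be a small evenly covered ball $\overline{B(x_{0},\rho_{0})}$, which makes the sheet decomposition and the identity $\int_{A}\tilde{\varphi}^{2}=|J|\,m_{K}$ clean, but discards the actual role of the hypothesis $\lambda_{0}(S_{1})\notin\sigma_{\ess}(S_{1})$. The paper instead chooses $K$ via Proposition \ref{Bottom of essential spectrum} so that $\lambda_{0}(S_{1},M_{1}\smallsetminus K)>\lambda_{0}(S_{1})$; this spectral gap over the complement of $K$ is precisely what forces the nearly minimizing data (and hence the domains produced in the Cheeger argument) to concentrate their $\tilde{\varphi}^{2}$-mass over $p^{-1}(K)$. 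Relatedly, the lemma allows $z$ to depend on $\varepsilon$ and $r$; the natural route is to push down via $\int_{A}\tilde{\varphi}^{2}=\int_{M_{1}}\#\bigl(p^{-1}(z)\cap A\bigr)\varphi_{1}^{2}$ and run a pigeonhole/averaging argument over $z\in K$, rather than fixing $z$ in a single evenly covered ball as you do. Without the spectral-gap choice of $K$ and the averaging over $z$, I do not see how to close the mass comparison, so as written the argument does not prove the lemma.
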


\begin{proof}
Since $\lambda_{0}(S_{1}) \notin \sigma_{\ess}(S_{1})$, from Proposition \ref{Bottom of essential spectrum}, there exists a compact $K \subset M_{1}$, such that $\lambda_{0}(S_{1} , M_{1} \smallsetminus K) > \lambda_{0}(S_{1})$. The proof is identical to the one of \cite[Lemma 4.5]{BMP2}, taking into account that \cite[Lemma 3.1]{BMP2} has been extended to possibly non-connected manifolds in Lemma \ref{aux1}.\qed
\end{proof}\medskip


\noindent\emph{Proof of Theorem \ref{weak converse}:}
Fix $x \in M_{1}$ and consider the fundamental group $\pi_{1}(M_{1})$ with base point $x$. Consider a compact set $K \subset M_{1}$ as in Lemma \ref{aux2}, and let $R>0$, such that $K \subset B(x,R)$.
Let $\varepsilon > 0$ and $G$ be a finite subset of $\pi_{1}(M_{1})$. For each $g \in G$, consider a smooth representative loop $\gamma_{g}$ based at $x$, and let
\[
r > \max_{g \in G} \ell(\gamma_{g}) + 2R,
\]
where $\ell(\cdot)$ stands for the length of a curve.
From Lemma \ref{aux2}, there exists $z \in K$ and a bounded open subset $A$ of $M_{2}$, such that
\[
\#(p^{-1}(z) \cap (A^{r} \smallsetminus A)) < \varepsilon \#(p^{-1}(z) \cap A).
\]

Consider a smooth path $\gamma \colon [0,1] \to M_{1}$ from $x$ to $z$, of length less than $R$. For $y \in p^{-1}(x)$, lift $\gamma$ to a path $\tilde{\gamma} \colon [0,1] \to M_{2}$, with $\tilde{\gamma}(0) = y$, and define $\Phi(y) := \tilde{\gamma}(1)$. Then the map $\Phi \colon p^{-1}(x) \to p^{-1}(z)$ is bijective.
Let $F := \Phi^{-1}(p^{-1}(z) \cap A)$ and consider $y \in F \smallsetminus Fg$, for some $g \in G$. Then $\Phi(y) \in A$ and $\Phi(y\cdot g^{-1}) \notin A$. Evidently, we have
\[
d(\Phi(y) , \Phi(y \cdot g^{-1}) ) \leq d(y,y \cdot g^{-1}) + 2 \ell (\gamma) \leq \ell(\gamma_{g}) + 2 R < r.
\]
Therefore, $\Phi(y \cdot g^{-1}) \in A^{r} \smallsetminus A$. Since $\Phi$ is bijective, it is clear that
\begin{eqnarray}
\#(F \smallsetminus Fg) &=& \#\{y \cdot g^{-1} : y \in F \smallsetminus Fg\} = \#\{ \Phi (y \cdot g^{-1}) : y \in F \smallsetminus Fg\} \nonumber \\
&\leq& \#(p^{-1}(z) \cap (A^{r} \smallsetminus A)) < \varepsilon \#(p^{-1}(z) \cap A) = \varepsilon \#(F). \nonumber
\end{eqnarray}
From Proposition \ref{Folner}, it follows that the covering is amenable. \qed

\section{Properties of the Neumann spectrum}\label{Section Spectrum}

In this section, we establish some properties of the Neumann spectrum that will be used in the sequel. Let $M$ be a possibly non-connected Riemannian manifold and $S = \Delta + V$ a Schr\"{o}dinger operator on $M$. It is worth to point out that we do not require $M$ to have non-empty boundary, which yields that the following results also hold for manifolds without boundary (and most of them are already known in this case). If $M$ has non-empty boundary, we denote by $\nu$ the inward pointing normal to $\partial M$. Throughout this section, we denote by $H_{V}(M)$ the space defined in Subsection \ref{Schroedinger preliminaries}

First, we establish some convenient expressions for the bottom of the Neumann spectrum, and derive some straightforward applications to Riemannian coverings.

\begin{proposition}\label{Approximation}
Any compactly supported smooth function belongs to $H_{V}(M)$. Moreover, any compactly supported Lipschitz function is in $H_{V}(M)$.
\end{proposition}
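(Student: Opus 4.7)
The plan is to reduce both claims to $H^{1}(M)$-approximation by elements of the defining subspace of $H_{V}(M)$. Observe first that for functions sharing a fixed compact support $K$, the $H_{V}$-norm is equivalent to the $H^{1}(M)$-norm, because $V$ is bounded on $K$ and the two inner products differ only by a weighted $L^{2}$-term with positive bounded weight. Thus it suffices to produce $H^{1}$-approximations. If $\partial M = \emptyset$, then $C^{\infty}_{c}(M)$ is the defining subspace, so smooth compactly supported functions lie in $H_{V}(M)$ tautologically, and a Lipschitz compactly supported $f$ is $H^{1}$-approximated by $C^{\infty}_{c}(M)$ functions via standard mollification in a chart containing $\supp f$. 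So the real work is the case $\partial M \neq \emptyset$, which I would handle in two stages.

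\emph{Stage 1 (smooth case).} Given $f \in C^{\infty}_{c}(M)$, I would modify $f$ in a collar of $\partial M$ to kill the normal derivative. Using the normal exponential map, choose a tubular neighborhood $U$ of $\partial M \cap \supp f$ with Fermi coordinates $(x', t)$, $t \in [0, \epsilon)$, so that $\nu = \partial_{t}$ on $\partial M$. Fix $\chi \in C^{\infty}([0, \infty))$ with $\chi(0) = 1$ and $\supp \chi \subset [0, 1)$, and for small $\delta > 0$ set
\[
f_{\delta}(x', t) := f(x', t) - \chi(t/\delta)\, t\, (\nu f)(x', 0)
\]
on $U$, and $f_{\delta} := f$ outside $U$. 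A direct computation yields $\nu(f_{\delta})|_{\partial M} = 0$, and $f_{\delta} \in C^{\infty}_{c}(M)$ for $\delta < \epsilon$. The correction term is $O(\delta)$ pointwise on a region of measure $O(\delta)$, hence of $L^{2}$-norm $O(\delta^{3/2})$; its normal derivative contains the term $-\chi'(t/\delta)(t/\delta)(\nu f)(x', 0) - \chi(t/\delta)(\nu f)(x', 0)$, which is merely $O(1)$ in $L^{\infty}$ but is supported in the slab $\{0 \leq t \leq \delta\}$ of measure $O(\delta)$, so of $L^{2}$-norm $O(\delta^{1/2})$; tangential derivatives are tamer. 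Therefore $f_{\delta} \to f$ in $H^{1}(M)$, and hence in $H_{V}(M)$.

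\emph{Stage 2 (Lipschitz case).} For a Lipschitz compactly supported $f$, I would first approximate $f$ in $H^{1}(M)$ by $C^{\infty}_{c}(M)$ functions by means of a partition of unity: standard mollification in local charts on the part of $\supp f$ away from $\partial M$, and, near $\partial M$, Fermi-coordinate reflection $\tilde{f}(x', t) := f(x', |t|)$ (still Lipschitz) into a slightly enlarged manifold, followed by mollification and restriction back to $M$. Applying Stage 1 to each smooth compactly supported approximant then places $f$ in $H_{V}(M)$. The main technical obstacle is the normal-derivative bound in Stage 1: the derivative $\chi'(t/\delta)/\delta$ blows up, and is only tamed by the factor $t \leq \delta$ to $O(1)$ in $L^{\infty}$; the $L^{2}$-smallness then comes entirely from the shrinking slab measure, not from pointwise control. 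Once this estimate is in place, the remainder is routine tubular-neighborhood and mollification bookkeeping.
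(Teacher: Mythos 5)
Your proof is correct, and for the smooth case it takes a genuinely different route from the paper. The paper replaces $f$ near $\partial M$ by the \emph{Lipschitz} function that is constant along normal geodesics in a thin slab $\{t < \delta_{0}\}$, and then decomposes that approximant as a sum of a Lipschitz function compactly supported in $M^{\circ}$ (handled via the empty-boundary case) and a smooth function with vanishing normal derivative (which lies in the defining subspace directly); a cutoff $\chi$ near the slab $\{t=\delta_{0}\}$ is needed to effect this splitting. You instead subtract the smooth corrector $\chi(t/\delta)\,t\,(\nu f)(x',0)$ and land directly in $\{g \in C^{\infty}_{c}(M): \nu(g)=0\}$ with a single $O(\delta^{1/2})$ estimate in $H^{1}$, which is cleaner and avoids the detour through the empty-boundary case. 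Your estimates are right: the only delicate term is $\chi'(t/\delta)(t/\delta)(\nu f)(x',0)$, which is $O(1)$ in $L^{\infty}$ but supported in a slab of measure $O(\delta)$, giving $O(\delta^{1/2})$ in $L^{2}$. For the Lipschitz case both you and the paper pass to a boundaryless enlargement (you by even reflection in Fermi coordinates, the paper by gluing cylinders and extending), mollify there, restrict back to $M$, and then invoke the smooth case; this part is essentially the same. One small point worth making explicit when you combine Stage~2 with Stage~1: all the approximants $g_{n}$ can be taken with support in a fixed compact set, so the equivalence of $\|\cdot\|_{H_{V}}$ and $\|\cdot\|_{H^{1}}$ is uniform along the sequence and the final diagonal extraction is legitimate.
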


\begin{proof}
If $M$ has empty boundary, then any compactly supported Lipschitz function $f$ belongs to $H_{0}^{1}(M)$. Since $V$ is smooth, it is easy to see that any such $f$ also belongs to $H_{V}(M)$. Therefore, it remains to prove the proposition for manifolds with non-empty boundary. 

Let $f \in C^{\infty}_{c}(M)$. Then there exists a compact $K \subset \partial M$ and $\delta > 0$, such that the map $\Phi \colon K \times [0,\delta) \to M$, defined by $\Phi(x,t) := \expo_{x} (t\nu)$, is a diffeomorphism onto its image $W_{\delta}$, and $\supp f \cap W_{\delta} \subset W_{\delta}^{\circ}$.
For $0 < \delta_{0} < \delta$, consider the Lipschitz function $f_{\delta_{0}}$, which is equal to $f$ outside $W_{\delta_{0}}$, and $f_{\delta_{0}}(\Phi(x,t)) = f(\Phi(x,\delta_{0}))$ in $W_{\delta_{0}}$.
Let $K_{1}$ be a compact neighborhood of $\Phi(K \times \{\delta_{0}\})$ and $K_{2}$ a compact neighborhood of $K_{1}$, that does not intersect $\partial M$. Consider $\chi \in C^{\infty}_{c}(M)$, with $\chi = 1$ in $K_{1}$ and $\supp \chi \subset K_{2}$. Since $\chi f_{\delta_{0}}$ is Lipschitz and compactly supported in the interior of $M$, it follows that $\chi f_{\delta_{0}} \in H_{V}(M)$.
Moreover, $(1 - \chi) f_{\delta_{0}} \in C^{\infty}_{c}(M)$ and $\nu(f) = 0$ on $\partial M$. Therefore, $(1-\chi)f_{\delta_{0}} \in H_{V}(M)$, which yields that $f_{\delta_{0}} \in H_{V}(M)$. It is clear that $f_{\delta_{0}} \rightarrow f$ in $H_{V}(M)$, as $\delta_{0} \rightarrow 0$, and in particular, $f \in H_{V}(M)$.

Let $f$ be a compactly supported Lipschitz function on $M$. Consider a Riemannian manifold $N$ of the same dimension, without boundary, containing $M$ (for instance, glue cylinders along $\partial M$). Extend $f$ to a compactly supported Lipschitz function $f^{\prime}$ in $N$ and let $K$ be a smoothly bounded, compact neighborhood of $\supp f^{\prime}$. Then there exists $(g_{n})_{n \in \mathbb{N}} \subset C^{\infty}_{c}(N)$, with $\supp g_{n} \subset K$ and $g_{n} \rightarrow f^{\prime}$ in $H_{0}^{1}(K)$. Then $h_{n} := g_{n}|_{M} \in C^{\infty}_{c}(M)$ and from the first statement, it follows that $h_{n} \in H_{V}(M)$. Evidently, we have that $h_{n} \rightarrow f$ in $H_{V}(M)$, and in particular, $f \in H_{V}(M)$. \qed
\end{proof}\medskip



For $f \in \Lip_{c}(M) \smallsetminus \{0\}$, the \textit{Rayleigh quotient} of $f$ with respect to $S$, is defined as
\[
\mathcal{R}_{S}(f) := \frac{\int_{M} (\| \grad f \|^{2} + Vf^{2})}{\int_{M} f^{2}}.
\]
\begin{proposition}\label{Bottom of Neumann}
The bottom of the spectrum of $S^{N}$ is given by
\[
\lambda_{0}^{N}(S) = \inf_{f \in C^{\infty}_{c}(M) \smallsetminus \{0\}} \mathcal{R}_{S}(f) = \inf_{f \in \Lip_{c}(M) \smallsetminus \{0\}} \mathcal{R}_{S}(f).
\]
\end{proposition}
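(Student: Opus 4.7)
The plan is to apply Proposition \ref{Bottom of Friedrichs} to $T = S$ on its natural domain $\mathcal{D}(T)$, where $\mathcal{D}(T) = C^{\infty}_{c}(M)$ if $\partial M = \emptyset$ and $\mathcal{D}(T) = \{ f \in C^{\infty}_{c}(M) : \nu(f) = 0 \text{ on } \partial M \}$ otherwise. Choosing the lower bound $c := \inf_{M} V$, the auxiliary Hilbert space $\mathcal{H}_{1}$ of the Friedrichs construction is, by the very definition given in Subsection \ref{Schroedinger preliminaries}, the space $H_{V}(M)$. The whole argument then consists in matching the variational formula of Proposition \ref{Bottom of Friedrichs} with the Rayleigh quotient $\mathcal{R}_{S}$.

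For $f \in \mathcal{D}(T)$, integration by parts (which is immediate if $\partial M = \emptyset$ and uses the Neumann condition $\nu(f) = 0$ to kill the boundary term otherwise) yields $\langle Sf, f \rangle_{L^{2}} = \int_{M} (\|\grad f\|^{2} + V f^{2})$, so that
\[
c - 1 + \frac{\| f \|_{H_{V}}^{2}}{\| f \|_{L^{2}}^{2}} = \inf_{M} V - 1 + \frac{\int_{M}(\|\grad f\|^{2} + Vf^{2}) + (1 - \inf_{M} V)\|f\|_{L^{2}}^{2}}{\| f \|_{L^{2}}^{2}} = \mathcal{R}_{S}(f).
\]
Thus Proposition \ref{Bottom of Friedrichs}, applied with $\mathcal{H}^{\prime} = \mathcal{D}(T)$, already identifies $\lambda_{0}^{N}(S)$ with $\inf_{\mathcal{D}(T) \smallsetminus \{0\}} \mathcal{R}_{S}$.

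To enlarge the infimum to $C^{\infty}_{c}(M)$ and $\Lip_{c}(M)$, I would invoke Proposition \ref{Approximation} to get the inclusions $C^{\infty}_{c}(M) \subset H_{V}(M)$ and $\Lip_{c}(M) \subset H_{V}(M)$. Since $\mathcal{D}(T)$ is contained in both, both qualify as admissible intermediate subspaces $\mathcal{H}^{\prime}$ in Proposition \ref{Bottom of Friedrichs}. For any $f$ in either space, the $H_{V}$-norm is by definition $\| f \|_{H_{V}}^{2} = \int_{M}(\|\grad f\|^{2} + (V - \inf_{M} V + 1)f^{2})$, so the identity $c - 1 + \| f \|_{H_{V}}^{2}/\| f \|_{L^{2}}^{2} = \mathcal{R}_{S}(f)$ continues to hold for such $f$, and Proposition \ref{Bottom of Friedrichs} delivers both claimed equalities simultaneously.

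There is no real obstacle here beyond bookkeeping the boundary case: Proposition \ref{Bottom of Friedrichs} does the variational work, and Proposition \ref{Approximation} is precisely what is needed to justify that general compactly supported smooth or Lipschitz functions (which need not satisfy $\nu(f) = 0$ on $\partial M$) are nevertheless legitimate test functions in $H_{V}(M)$.
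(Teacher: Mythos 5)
Your proof is correct and follows essentially the same route as the paper's: identify $\mathcal{R}_{S}(f)$ with $\inf_{M}V - 1 + \|f\|_{H_{V}(M)}^{2}/\|f\|_{L^{2}(M)}^{2}$ and feed this into Proposition \ref{Bottom of Friedrichs}, using Proposition \ref{Approximation} to certify that $C^{\infty}_{c}(M)$ and $\Lip_{c}(M)$ sit between $\mathcal{D}(T)$ and $H_{V}(M)$. Your write-up is somewhat more explicit than the paper's two-line proof (in particular you spell out the integration by parts and the role of Proposition \ref{Approximation}, both of which the paper leaves implicit), but there is no difference in substance.
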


\begin{proof}
It is clear that for any non-zero $f \in \Lip_{c}(M)$, we have
\[
\mathcal{R}_{S}(f) = {\inf}_{M}V - 1 + \frac{\| f \|_{H_{V}(M)}^{2}}{\| f \|_{L^{2}(M)}^{2}},
\]
and the asserted equalities follow from Proposition \ref{Bottom of Friedrichs}. \qed
\end{proof}

\begin{proposition}\label{Inequality of Bottoms}
Let $p \colon M_{2} \to M_{1}$ be a Riemannian covering, with $M_{2}$ possibly non-connected. Let $S_{1}$ be a Schr\"{o}dinger operator on $M_{1}$ and consider its lift $S_{2}$ on $M_{2}$. Then $\lambda_{0}^{N}(S_{1}) \leq \lambda_{0}^{N}(S_{2})$.
\end{proposition}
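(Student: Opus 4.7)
The plan is to use the variational characterization from Proposition \ref{Bottom of Neumann}: starting with an arbitrary test function $f \in \Lip_{c}(M_{2}) \smallsetminus \{0\}$ on the cover, I will build a test function $h \in \Lip_{c}(M_{1}) \smallsetminus \{0\}$ whose Rayleigh quotient with respect to $S_{1}$ is no greater than $\mathcal{R}_{S_{2}}(f)$. Taking the infimum over $f$ then yields $\lambda_{0}^{N}(S_{1}) \leq \lambda_{0}^{N}(S_{2})$.

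The natural candidate is the $\ell^{2}$-aggregation along fibers:
\[
h(x) := \Bigl(\sum_{y \in p^{-1}(x)} f(y)^{2}\Bigr)^{1/2}.
\]
First I would check that $h \in \Lip_{c}(M_{1})$. Since $\supp f$ is compact in $M_{2}$ (and thus meets only finitely many connected components of $M_{2}$), the set $p(\supp f)$ is compact in $M_{1}$ and the sum is finite. Over any evenly covered open $U \subset M_{1}$ with $p^{-1}(U) = \bigsqcup_{i} U_{i}$, writing $f_{i} := f \circ (p|_{U_{i}})^{-1}$, we have $h|_{U} = (\sum_{i} f_{i}^{2})^{1/2}$, which is Lipschitz as the $\ell^{2}$-norm of finitely many Lipschitz functions. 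By Proposition \ref{Approximation}, $h \in H_{V}(M_{1})$.

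Next I would compare the three integrals. The $L^{2}$ and potential terms are immediate from the sheet-wise identity $\int_{U} h^{2} = \sum_{i} \int_{U_{i}} f^{2}\circ p|_{U_{i}}^{-1}$ and the fact that $V \circ p$ is the lift of $V$, giving
\[
\int_{M_{1}} h^{2} = \int_{M_{2}} f^{2}, \qquad \int_{M_{1}} V h^{2} = \int_{M_{2}} (V \circ p) f^{2}.
\]
For the gradient term, on the open set $\{h > 0\}$ the chain rule gives $2 h \grad h = \sum_{i} 2 f_{i} \grad f_{i}$ (using that $p$ is a local isometry so $|\grad f_{i}|$ equals the pullback of $|\grad f|$), and the Cauchy--Schwarz inequality in $\ell^{2}$ yields $|\grad h|^{2} \leq \sum_{i} |\grad f_{i}|^{2}$ there. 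The main (minor) technical point is that $h$ is only Lipschitz, not smooth, across $\{h = 0\}$; but since $h$ is a non-negative Lipschitz function, $\grad h = 0$ almost everywhere on $\{h = 0\}$, so the inequality persists a.e. on $M_{1}$. Integrating and applying the sheet-wise identity again gives $\int_{M_{1}} |\grad h|^{2} \leq \int_{M_{2}} |\grad f|^{2}$.

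Combining these three estimates produces $\mathcal{R}_{S_{1}}(h) \leq \mathcal{R}_{S_{2}}(f)$, and taking infima over $f$ via Proposition \ref{Bottom of Neumann} gives the result. The only subtlety I anticipate is justifying the gradient inequality across the zero set of $h$, but this follows from the standard fact that $\grad u = 0$ a.e.\ on $\{u = 0\}$ for any non-negative function $u \in H^{1}_{\mathrm{loc}}$.
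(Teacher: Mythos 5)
Your proposal is correct and follows essentially the same route as the paper: both take the $\ell^{2}$-pushdown $h(x) = \bigl(\sum_{y \in p^{-1}(x)} f(y)^{2}\bigr)^{1/2}$, verify $\mathcal{R}_{S_{1}}(h) \leq \mathcal{R}_{S_{2}}(f)$, and conclude via the variational characterization of Proposition \ref{Bottom of Neumann}. The only difference is that the paper starts from $f \in C^{\infty}_{c}(M_{2})$ and cites \cite[Section 4]{BMP1} for the pushdown inequality, whereas you spell out the sheet-wise computation and the Cauchy--Schwarz/zero-set argument directly — a welcome but not distinct addition.
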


\begin{proof}
Let $f \in C^{\infty}_{c}(M_{2}) \smallsetminus \{0\}$ and consider its pushdown
\[
g(z) := \big( \sum_{y \in p^{-1}(z)} f(y)^{2} \big)^{1/2}
\]
on $M_{1}$. Then $g \in \Lip_{c}(M_{1})$, $\| g \|_{L^{2}(M_{1})} = \| f \|_{L^{2}(M_{2})}$ and $\mathcal{R}_{S_{1}}(g) \leq \mathcal{R}_{S_{2}}(f)$ (cf. \cite[Section 4]{BMP1}). The statement follows from Proposition \ref{Bottom of Neumann}. \qed
\end{proof}

\begin{theorem}\label{Old result for Neumann}
Let $p \colon M_{2} \to M_{1}$ be a Riemannian covering. Let $S_{1}$ be a Schr\"{o}dinger operator on $M_{1}$ and consider its lift $S_{2}$ on $M_{2}$. If $p$ is infinite sheeted and amenable, then $\lambda_{0}^{N}(S_{1}) = \lambda_{0}^{N,\ess}(S_{2})$.
\end{theorem}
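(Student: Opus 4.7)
The plan is to establish the upper bound $\lambda_{0}^{N,\ess}(S_{2}) \le \lambda_{0}^{N}(S_{1})$, since the reverse chain $\lambda_{0}^{N}(S_{1}) \le \lambda_{0}^{N}(S_{2}) \le \lambda_{0}^{N,\ess}(S_{2})$ is immediate from Proposition \ref{Inequality of Bottoms} and the definition of the essential spectrum. By the standard characterization of the bottom of the essential spectrum (a self-adjoint operator $L$ has $\lambda_{0}^{\ess}(L) \le \lambda$ iff for every $\varepsilon > 0$ there is an infinite-dimensional subspace of $\mathcal{D}(L)$ on which the Rayleigh quotient is below $\lambda + \varepsilon$), the task reduces to producing, for each $\varepsilon > 0$, an infinite-dimensional subspace of $H_{V}(M_{2})$ on which $\mathcal{R}_{S_{2}} < \lambda_{0}^{N}(S_{1}) + \varepsilon$.

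Given $\varepsilon > 0$, Proposition \ref{Bottom of Neumann} supplies $f \in \Lip_{c}(M_{1})$ with $\mathcal{R}_{S_{1}}(f) < \lambda_{0}^{N}(S_{1}) + \varepsilon/2$. I enclose $\supp f$ in a smoothly bounded connected compact neighborhood $K$ of the base point $x$; Proposition \ref{amenability domains} ensures that the restricted covering $p \colon p^{-1}(K) \to K$ is amenable, and Proposition \ref{Folner} then produces, for any prescribed finite generating set of $i_{*}\pi_{1}(K)$ and any $\delta > 0$, a finite Følner set $F \subset p^{-1}(x)$ with $\#(F \smallsetminus Fg) < \delta \#F$ for $g$ in the generating set. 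From such an $F$ I build a test function $\tilde f_{F} \in \Lip_{c}(M_{2})$, following the amenable-covering lift of \cite{BMP1}: roughly, on each connected component of $p^{-1}(K)$ that meets $F$, $\tilde f_{F}$ is assembled from pullbacks $f \circ p$ weighted by the count of $F$-points in that component, so that the Følner property forces
\[
\mathcal{R}_{S_{2}}(\tilde f_{F}) \le \mathcal{R}_{S_{1}}(f) + o(1) < \lambda_{0}^{N}(S_{1}) + \varepsilon,
\]
where $o(1) \to 0$ as $\delta \to 0$. That $\tilde f_{F}$ lies in $H_{V}(M_{2})$ is automatic from Proposition \ref{Approximation}.

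Since $p$ is infinite sheeted, $p^{-1}(x)$ is infinite, and Følner sets $F_{n} \subset p^{-1}(x)$ may be extracted outside any prescribed finite subset. As each $\supp \tilde f_{F_{n}}$ meets only finitely many components of $p^{-1}(K)$, the $F_{n}$ can be chosen inductively so that the supports $\supp \tilde f_{F_{n}}$ are pairwise disjoint. On the infinite-dimensional span $G_{\varepsilon} := \spa\{\tilde f_{F_{n}} : n \in \mathbb{N}\}$, disjointness of supports makes $\mathcal{R}_{S_{2}}(g)$ a weighted average of the $\mathcal{R}_{S_{2}}(\tilde f_{F_{n}})$ for any non-zero $g \in G_{\varepsilon}$, hence bounded above by $\lambda_{0}^{N}(S_{1}) + \varepsilon$. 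The characterization above then gives $\lambda_{0}^{N,\ess}(S_{2}) \le \lambda_{0}^{N}(S_{1}) + \varepsilon$, and letting $\varepsilon \to 0$ closes the argument. The main technical obstacle is the Rayleigh estimate for $\tilde f_{F}$: since $K$ is generally not simply connected, $p^{-1}(K)$ does not split as disjoint isometric copies, and the cross-component contributions on components hit by several points of $F$ must be controlled by the Følner defect; this is precisely the estimate of \cite{BMP1, Mine} for the boundaryless case, and its Neumann adaptation is absorbed by Proposition \ref{Approximation}, which removes the need to impose an explicit Neumann condition on $\tilde f_{F}$.
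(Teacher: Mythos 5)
The paper's own proof of this theorem is a one-line citation: it invokes \cite[Theorem 1.2]{Mine} for the nontrivial inequality $\lambda_{0}^{N,\ess}(S_{2}) \leq \lambda_{0}^{N}(S_{1})$ and combines it with Proposition~\ref{Inequality of Bottoms} for the elementary chain $\lambda_{0}^{N}(S_{1}) \leq \lambda_{0}^{N}(S_{2}) \leq \lambda_{0}^{N,\ess}(S_{2})$. What you have written is not a different route so much as a sketch of what that cited theorem actually proves: the Persson-type characterization of $\lambda_{0}^{\ess}$ via infinite-dimensional subspaces of small Rayleigh quotient, test functions built from F\o{}lner sets in the spirit of \cite{BMP1}, and disjointification via infinite-sheetedness. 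That is indeed the mechanism behind \cite{Mine}, and using Proposition~\ref{Approximation} to bypass the explicit Neumann boundary condition on test functions is exactly the right adaptation. In that sense your argument buys self-containment at the price of length; the paper buys brevity at the price of an external citation.

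Two points in your sketch deserve tightening. First, the claim that F\o{}lner sets ``may be extracted outside any prescribed finite subset'' is where infinite-sheetedness enters, and it is not automatic: one needs that F\o{}lner sets can be taken of arbitrarily large cardinality. This does hold here, because $M_{2}$ is connected (the paper's standing convention), so $\pi_{1}(M_{1})$ acts transitively on the infinite fibre $p^{-1}(x)$; in the Schreier graph of a transitive action on an infinite set any finite set has non-empty boundary, which forces $\#F \to \infty$ as the F\o{}lner defect $\varepsilon \to 0$. Once $\#F$ can be made large, deleting a fixed finite set from $F$ perturbs the defect only by $O(\#F_{0}/\#F)$, so the modified set is still F\o{}lner. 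Second, your justification for disjoint supports (``meets only finitely many components of $p^{-1}(K)$'') is slightly off: $p^{-1}(K)$ may well be connected, so the correct observation is that $\supp \tilde f_{F}$ is contained in a bounded neighbourhood of the finitely many sheets over $F$, and one should choose each $F_{n}$ with its (bounded) neighbourhood in $p^{-1}(x)$ disjoint from all previously used sheets. With these two clarifications the sketch is sound, though the core Rayleigh estimate for $\tilde f_{F}$ is still, as you note, imported wholesale from \cite{BMP1,Mine}.
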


\begin{proof}
Follows from \cite[Theorem 1.2]{Mine} and Corollary \ref{Inequality of Bottoms}. \qed
\end{proof}\medskip

Next, we study properties of eigenfunctions corresponding to the bottom of the spectrum and minimizing sequences for the Rayleigh quotient of Schr\"{o}dinger operators on connected Riemannian manifolds.

\begin{proposition}\label{minimizing sequence}
Let $S=\Delta + V$ be a Schr\"{o}dinger operator on a Riemannian manifold $M$, and consider $(f_{n})_{n \in \mathbb{N}} \subset \Lip_{c}(M)$, with $\| f_{n} \|_{L^{2}(M)} = 1$ and $\mathcal{R}_{S}(f_{n}) \rightarrow \lambda_{0}^{N}(S)$. If $\lambda_{0}^{N}(S) 
\notin \sigma_{\ess}^{N}(S)$, then there exists a subsequence $(f_{n_{k}})_{k \in \mathbb{N}}$, such that $f_{n_{k}} \rightarrow \varphi$ in $L^{2}(M)$, for some $\lambda_{0}^{N}(S)$-eigenfunction $\varphi$ of $S^{N}$.
\end{proposition}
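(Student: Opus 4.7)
The plan is to use the spectral theorem for $S^{N}$ together with the assumption that $\lambda_{0}^{N}(S)$ is isolated from the rest of the spectrum. Since $\lambda_{0}^{N}(S) \notin \sigma_{\ess}^{N}(S)$, this bottom is an eigenvalue of finite multiplicity and there exists $\delta > 0$ such that $(\lambda_{0}^{N}(S), \lambda_{0}^{N}(S) + \delta) \cap \sigma^{N}(S) = \emptyset$. Write $\lambda_{0} := \lambda_{0}^{N}(S)$, let $E \subset L^{2}(M)$ be the (finite-dimensional) $\lambda_{0}$-eigenspace of $S^{N}$, and let $P \colon L^{2}(M) \to E$ be the orthogonal projection, with $P^{\perp} := I - P$.

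The key step is to control the Rayleigh quotient from below in terms of $P^{\perp} f_{n}$. By Proposition \ref{Approximation}, each $f_{n} \in \Lip_{c}(M)$ lies in $H_{V}(M)$, which coincides with the form domain of $S^{N}$, and on this form domain the Rayleigh quotient equals $q(f)/\|f\|_{L^{2}}^{2}$, where $q$ is the closed quadratic form associated to $S^{N}$. Using the spectral decomposition of $S^{N}$ and the gap above $\lambda_{0}$, I expect the standard form-domain estimate
\[
q(f) \geq \lambda_{0} \| P f \|_{L^{2}}^{2} + (\lambda_{0} + \delta) \| P^{\perp} f \|_{L^{2}}^{2} = \lambda_{0} \| f \|_{L^{2}}^{2} + \delta \| P^{\perp} f \|_{L^{2}}^{2}
\]
for every $f \in H_{V}(M)$. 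Applying this to $f_{n}$ and rearranging yields $\delta \| P^{\perp} f_{n} \|_{L^{2}}^{2} \leq \mathcal{R}_{S}(f_{n}) - \lambda_{0} \to 0$, so $\| P^{\perp} f_{n} \|_{L^{2}} \to 0$ and consequently $\| P f_{n} \|_{L^{2}} \to 1$.

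Now $(P f_{n})_{n \in \mathbb{N}}$ is a bounded sequence in the finite-dimensional space $E$, so by compactness there is a subsequence $P f_{n_{k}} \to \varphi$ in $L^{2}(M)$, with $\varphi \in E$ and $\| \varphi \|_{L^{2}} = 1$. Combining with $P^{\perp} f_{n_{k}} \to 0$ gives $f_{n_{k}} \to \varphi$ in $L^{2}(M)$, and $\varphi$ is a $\lambda_{0}$-eigenfunction of $S^{N}$ because $\varphi \in E$ and $\varphi \neq 0$.

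The only non-routine point is justifying the form-level spectral inequality for the Lipschitz functions $f_{n}$; this is where Proposition \ref{Approximation} is used to ensure $f_{n}$ lies in the form domain $H_{V}(M)$, after which the inequality is a direct application of the spectral theorem to the closed form of the self-adjoint operator $S^{N}$. Everything else is soft, and I do not expect any genuine obstacle.
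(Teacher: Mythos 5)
Your proof is correct and follows essentially the same strategy as the paper: decompose with respect to the finite-dimensional $\lambda_0^N(S)$-eigenspace $E$, use the spectral gap to show the component orthogonal to $E$ tends to zero in $L^2$, and extract a convergent subsequence of the projections onto $E$ by finite-dimensionality. The only (cosmetic) difference is that you work directly with the closed quadratic form on the form domain $H_V(M)$, whereas the paper first replaces $f_n$ by smooth approximants $f_n' \in C^\infty_c(M) \cap \mathcal{D}(S^N)$ so that $\langle S^N h_n, h_n\rangle$ is literally defined; both routes yield the same gap estimate $\delta\|P^\perp f_n\|^2 \leq \mathcal{R}_S(f_n) - \lambda_0 \to 0$.
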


\begin{proof}
From Proposition \ref{Approximation}, there exists $(f_{n}^{\prime})_{n \in \mathbb{N}} \subset C^{\infty}_{c}(M) \cap \mathcal{D}(S^{N})$, with $\| f_{n}^{\prime} \|_{L^{2}(M)} = 1$ and $\| f_{n} - f_{n}^{\prime} \|_{H_{V}(M)} \leq 1/n$, for any $n  \in \mathbb{N}$. Evidently, $\mathcal{R}_{S}(f_{n}^{\prime}) \rightarrow \lambda_{0}^{N}(S)$ and it suffices to prove the asserted statement for $(f^{\prime}_{n})_{n \in \mathbb{N}}$.

Since $\lambda_{0}^{N}(S)$ is not in the essential spectrum, it is an isolated eigenvalue of finite multiplicity. Let $E$ be the eigenspace corresponding to $\lambda_{0}^{N}(S)$, and $g_{n}$ be the projection of $f^{\prime}_{n}$ on $E$ with respect to the $L^{2}(M)$-inner product, $n \in \mathbb{N}$. Since $E$ is finite dimensional, after passing to a subsequence, we may assume that $g_{n} \rightarrow \varphi$ in $L^{2}(M)$, for some $\varphi \in E$. Consider $h_{n} := f^{\prime}_{n} - g_{n} \in \mathcal{D}(S^{N})$. 
Since $h_{n}$ is perpendicular to $E$, from the Spectral Theorem (cf. for instance \cite[Chapter 8]{Taylor2}), it follows that there exists $c_{0}>0$, such that
\begin{equation}\label{eq}
\| h_{n} \|_{H_{V}(M)}^{2} - (1 - {\inf}_{M}V) \| h_{n} \|_{L^{2}(M)}^{2} = \langle S^{N} h_{n} , h_{n} \rangle_{L^{2}(M)} \geq (\lambda_{0}^{N}(S) + c_{0}) \| h_{n} \|_{L^{2}(M)}^{2} ,
\end{equation}
for any $n \in \mathbb{N}$. It is clear that
\begin{eqnarray}
\langle h_{n} , g_{n} \rangle_{H_{V}(M)} &=& \langle h_{n} , S^{N}g_{n} \rangle_{L^{2}(M)} + (1-{\inf}_{M}V) \langle h_{n} ,g_{n} \rangle_{L^{2}(M)} \nonumber \\
&=& (\lambda_{0}^{N}(S) + 1 - {\inf}_{M}V) \langle h_{n} , g_{n} \rangle_{L^{2}(M)} = 0. \nonumber
\end{eqnarray}
Let $\varepsilon > 0$. Then, for $n$ sufficiently large, we have $\mathcal{R}_{S}(f_{n}) \leq \lambda_{0}^{N}(S) + \varepsilon$, and thus
\begin{eqnarray}
\| h_{n} \|_{H_{V}(M)}^{2} - (1 - {\inf}_{M}V) \| h_{n} \|_{L^{2}(M)}^{2} &=& (\| f_{n}^{\prime} \|_{H_{V}(M)}^{2} - (1-{\inf}_{M}V) \| f_{n}^{\prime} \|_{L^{2}(M)}^{2}) \nonumber\\
&-& (\| g_{n} \|_{H_{V}(M)}^{2} - (1-{\inf}_{M}V) \| g_{n} \|_{L^{2}(M)}^{2} ) \nonumber \\
&\leq& (\lambda_{0}^{N}(S) + \varepsilon) \| f^{\prime}_{n} \|_{L^{2}(M)}^{2} - \lambda_{0}^{N}(S) \| g_{n} \|_{L^{2}(M)}^{2} \nonumber \\
&=& \varepsilon + \lambda^{N}_{0}(S) \| h_{n} \|_{L^{2}(M)}^{2}. \nonumber
\end{eqnarray}
From (\ref{eq}), this yields that $h_{n} \rightarrow 0$ in $L^{2}(M)$. Therefore, $f_{n}^{\prime} \rightarrow \varphi$ in $L^{2}(M)$.\qed

\end{proof}

\begin{lemma}\label{Eigenfunction auxiliary}
Let $S$ be a Schr\"{o}dinger operator on a (connected) Riemannian manifold $M$ and let $\varphi \in C^{\infty}(M) \smallsetminus \{0\}$ be a non-negative function satisfying $S \varphi = \lambda \varphi$, for some $\lambda \in \mathbb{R}$. Then $\varphi$ is positive in the interior of $M$. If, in addition, $M$ has non-empty boundary, and $\nu(\varphi) = 0$ on $\partial M$, then $\varphi$ is positive on $\partial M$.
\end{lemma}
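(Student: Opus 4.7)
The proof plan is to invoke the strong maximum principle for the interior statement and Hopf's boundary point lemma for the boundary statement, both applied to the linear second-order elliptic equation $\Delta\varphi + (V-\lambda)\varphi = 0$. All coefficients are smooth, so on any relatively compact subset the operator is uniformly elliptic with bounded coefficients.

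For the interior, I would argue that the subsets $\{\varphi = 0\} \cap M^\circ$ and $\{\varphi > 0\} \cap M^\circ$ are both relatively open in $M^\circ$. Openness of the second is immediate from continuity. Openness of the first follows from the strong maximum principle: if $\varphi(x_0) = 0$ for some $x_0 \in M^\circ$, work in a small coordinate ball around $x_0$ on which $V - \lambda$ is bounded, and apply the standard fact that a non-negative smooth solution of a linear elliptic equation with bounded coefficients which vanishes at an interior point vanishes throughout the ball (the possibly bad sign of $V - \lambda$ being absorbed by the usual constant-shift trick, since $\varphi \geq 0$). As $M$ is connected with smooth boundary, $M^\circ$ is also connected, and so either $\varphi \equiv 0$ on $M^\circ$---which by continuity and the hypothesis $\varphi \not\equiv 0$ is ruled out---or $\varphi > 0$ on $M^\circ$.

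For the boundary part, suppose toward contradiction that $\varphi(x_0) = 0$ at some $x_0 \in \partial M$. Since $\varphi$ is non-negative on $M$ and strictly positive on $M^\circ$ by the previous step, $x_0$ is an absolute minimum of $\varphi$. Hopf's boundary point lemma then applies in a neighborhood of $x_0$: the smoothness of $\partial M$ provides the required interior sphere at $x_0$, the equation is elliptic with bounded smooth coefficients, and the non-negativity of $\varphi$ again bypasses any sign issue for the zeroth-order term. The conclusion is $\nu(\varphi)(x_0) > 0$, contradicting the hypothesis $\nu(\varphi) = 0$ on $\partial M$.

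The only mildly non-routine point is ensuring that the standard maximum-principle machinery---often stated for operators whose zeroth-order coefficient has a definite sign---applies here, where $V - \lambda$ has none. This is handled in the well-known way by exploiting $\varphi \geq 0$ to cancel the unwanted sign. All remaining ingredients (smoothness of coefficients, interior sphere condition, connectedness of $M^\circ$) are immediate from the hypotheses.
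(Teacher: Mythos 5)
Your argument is correct and the boundary half is essentially identical to the paper's (Hopf's boundary point lemma in a coordinate chart, with a constant added to make the zeroth-order coefficient nonnegative and $\varphi \geq 0$ supplying the required differential inequality). The interior half, however, takes a genuinely different route. You use the strong maximum principle directly: the zero set of $\varphi$ in $M^{\circ}$ is open (a nonnegative supersolution of an elliptic operator with suitably shifted nonnegative zeroth-order coefficient that vanishes at an interior point vanishes on a whole ball) and closed by continuity, and $M^{\circ}$ is connected. The paper instead takes a Harnack-inequality detour: it works with $\varphi + \varepsilon$ on a geodesic ball, applies the Cheng--Yau gradient estimate to get $\sup_{B(x,\delta/2)}(\varphi+\varepsilon) \leq c\,\inf_{B(x,\delta/2)}(\varphi+\varepsilon)$ with $c$ independent of $\varepsilon$, and lets $\varepsilon \to 0$ to conclude that a zero of $\varphi$ forces $\varphi \equiv 0$ on $B(x,\delta/2)$; this again yields that the zero set is open, then finishes by the same connectedness argument. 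Your strong-maximum-principle version is the more standard textbook approach and slightly more elementary; the paper's Harnack version is natural in context since the Cheng--Yau estimate is already invoked elsewhere in the paper (for the Harnack estimates of Section 2.4). Both yield precisely the same open-closed skeleton, so the proofs are interchangeable here. One small point worth being explicit about: when you invoke the ``constant-shift trick,'' you should, as the paper does, fix a constant $c_0$ with $V - \lambda + c_0 \geq 0$ on the relevant compact coordinate ball, so that $\varphi$ is a supersolution (resp.\ $-\varphi$ a subsolution) of an operator with nonnegative zeroth-order coefficient; your sketch leaves this implicit, but the step is routine and your reasoning for why $\varphi \geq 0$ saves the day is sound.
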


\begin{proof}
Assume that there exists a point $x$ in the interior of $M$, such that $\varphi(x)=0$. Let $\delta > 0$, such that $\expo_{x} \colon B(0,2\delta) \subset T_{x}M \to M$ is a diffeomorphism onto its image. Then $B(x,\delta)$ may be considered as a geodesic ball of radius $\delta$ in a complete Riemannian manifold without boundary. In $B(x,\delta)$, for any $\varepsilon > 0$, we have
\[
| \Delta (\varphi + \varepsilon)| \leq (\varphi + \varepsilon) \sup_{B(x,\delta)} |\lambda - V|
\]
and
\[
\| \grad \Delta(\varphi + \varepsilon) \| \leq \|\grad (\varphi + \varepsilon)\| \sup_{B(x,\delta)} |\lambda - V|  + (\varphi + \varepsilon) \sup_{B(x,\delta)} \| \grad V\|.
\]
From \cite[Theorem 6]{Cheng-Yau}, it follows that there exists $c> 0$, independent from $\varepsilon$, such that $$\sup_{B(x,\delta/2)} (\varphi + \varepsilon) \leq c \inf_{B(x,\delta/2)} (\varphi + \varepsilon).$$
Since $\varepsilon > 0$ is arbitrary, it follows that if $\varphi(x) = 0$, then $\varphi = 0$ in $B(x,\delta/2)$. In particular, the set $\{ x \in M^{\circ} : \varphi(x) = 0 \}$ is open and closed. Since $M$ is connected and $\varphi$ is not identically zero, it follows that $\varphi$ is positive in $M^{\circ}$.

Assume that $M$ has non-empty boundary and $\nu(\varphi) = 0$ on $\partial M$. Assume that there exists $x \in \partial M$, such that $\varphi(x) = 0$. Since $\nu(\varphi) = 0$ on $\partial M$ and $\varphi|_{\partial M}$ attains a minimum at $x$, it follows that $\grad \varphi (x) = 0$.
Consider a coordinate system $\Phi \colon U := B(0,r) \cap \mathbb{H}^{m} \to M$, with $\Phi(0) = x$, where $m$ is the dimension of $M$ and $\mathbb{H}^{m}$ is the upper half-space of dimension $m$. 
Consider $c_{0} \in \mathbb{R}$, with $c_{0} \geq - \inf_{M} V$ and $c_{0} \geq -\lambda$.
Then $\phi := \varphi \circ \Phi$ is non-negative, smooth and satisfies
\[
L \phi := - \frac{1}{\sqrt{ \det g }} \sum_{i,j=1}^{m}  \frac{\partial}{\partial x_{i}} (g^{ij} \sqrt{ \det g } \frac{\partial \phi}{\partial x_{j}}) + (V + c_{0}) \phi = (\lambda + c_{0}) \phi \geq 0.
\]
Since $V + c_{0} \geq 0$, $\phi(0) = 0 < \phi(y)$ for all $y \in U^{\circ}$, and $U$ satisfies the interior ball condition at the origin, from Hopf's Lemma (cf. for instance \cite[p. 330]{MR2597943}), it follows that
\[
\frac{\partial \phi}{\partial x_{m}} (0) \neq 0,
\]
which is a contradiction, since $\grad \phi (0) = 0$. Therefore, $\varphi$ is positive on $\partial M$. \qed

\end{proof}

\begin{proposition}\label{eigenfunction}
Let $M$ be a (connected) Riemannian manifold and $S = \Delta +V$ a Schr\"{o}dinger operator on $M$. If $\varphi \in \mathcal{D}(S^{N}) \smallsetminus\{0\}$ is a $\lambda_{0}^{N}(S)$-eigenfunction of $S^{N}$, then $\varphi$ is smooth and nowhere vanishing. Moreover, if $M$ has non-empty boundary, then $\nu(\varphi) = 0$ on $\partial M$.
\end{proposition}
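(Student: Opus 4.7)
The plan is to first use elliptic regularity to establish smoothness and the boundary condition for $\varphi$ itself, and then to pass to $|\varphi|$ and apply Lemma \ref{Eigenfunction auxiliary} to conclude positivity.

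Set $\lambda := \lambda_{0}^{N}(S)$. Since $\varphi \in \mathcal{D}(S^{N})$ satisfies $S^{N} \varphi = \lambda \varphi$, standard elliptic regularity for the elliptic operator $S = \Delta + V$ with smooth coefficients yields $\varphi \in C^{\infty}(M^{\circ})$. When $\partial M \neq \emptyset$, the characterization of $\mathcal{D}(S^{N})$ via the Friedrichs extension on $H_{V}(M)$ forces the natural boundary condition $\nu(\varphi) = 0$ on $\partial M$, and elliptic regularity for the Neumann problem up to the boundary then gives $\varphi \in C^{\infty}(M)$.

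The core of the argument is to produce a non-negative eigenfunction in the form of $|\varphi|$. By standard $H^{1}$ theory (implemented in $H_{V}(M)$ via Proposition \ref{Approximation}, approximating $\varphi$ in $H_{V}(M)$ by compactly supported Lipschitz functions and taking absolute values), one has $|\varphi| \in H_{V}(M)$ with $\||\varphi|\|_{L^{2}(M)} = \|\varphi\|_{L^{2}(M)}$ and $\|\grad |\varphi|\| = \|\grad \varphi\|$ almost everywhere, so $\mathcal{R}_{S}(|\varphi|) = \mathcal{R}_{S}(\varphi) = \lambda$; thus $|\varphi|$ is a non-negative minimizer of the Rayleigh quotient.

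To upgrade this to the eigenvalue equation, I would introduce the symmetric bilinear form on $H_{V}(M)$
\[
Q(f, g) := \langle f, g \rangle_{H_{V}(M)} - (1 - {\inf}_{M}V + \lambda) \langle f, g \rangle_{L^{2}(M)}.
\]
By Proposition \ref{Bottom of Neumann} we have $Q(f,f) \geq 0$ for every $f \in H_{V}(M)$, and $Q(|\varphi|, |\varphi|) = 0$ because $\mathcal{R}_{S}(|\varphi|) = \lambda$. Expanding the inequality $Q(|\varphi| + tg, |\varphi| + tg) \geq 0$ in $t \in \mathbb{R}$ for arbitrary $g \in H_{V}(M)$ forces $Q(|\varphi|, g) = 0$, which is precisely the weak formulation of $S|\varphi| = \lambda |\varphi|$ with Neumann condition on $\partial M$. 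Elliptic regularity (interior and, when $\partial M \neq \emptyset$, up to the boundary) then yields $|\varphi| \in C^{\infty}(M)$ with $\nu(|\varphi|) = 0$ on $\partial M$. Lemma \ref{Eigenfunction auxiliary} applied to $|\varphi|$ gives $|\varphi| > 0$ on $M$, and hence $\varphi$ is nowhere vanishing.

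The main obstacle is the passage from ``$|\varphi|$ minimizes $\mathcal{R}_{S}$'' to ``$|\varphi|$ is a smooth weak solution of $S|\varphi| = \lambda|\varphi|$ satisfying the Neumann condition'', because a priori $|\varphi|$ is only Lipschitz on the zero set of $\varphi$ and the usual Lagrange multiplier derivation is not available there. The Cauchy--Schwarz trick for the non-negative form $Q$ bypasses this difficulty cleanly, and then elliptic regularity and Lemma \ref{Eigenfunction auxiliary} finish the argument.
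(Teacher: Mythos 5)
Your argument is correct and rests on the same pillars as the paper's proof: pass to $|\varphi|$, show it still minimizes the Rayleigh quotient, derive the weak eigenvalue equation, and then invoke elliptic regularity together with Lemma \ref{Eigenfunction auxiliary}. The differences are in presentation. Where you derive $Q(|\varphi|, g) = 0$ by expanding the non-negative quadratic form $Q(|\varphi|+tg, |\varphi|+tg)$ and noting that the constant term vanishes, the paper differentiates $t \mapsto \mathcal{R}_{S}(|\varphi|+tf)$ at its minimum $t=0$; these are the same computation written differently. A more substantive divergence is in how boundary smoothness is obtained. You apply Neumann elliptic regularity directly to $|\varphi|$ as a weak solution in $H_{V}(M)$, concluding $|\varphi| \in C^{\infty}(M)$ with $\nu(|\varphi|)=0$, and get positivity up to the boundary from Lemma \ref{Eigenfunction auxiliary}; the paper instead only invokes interior regularity for $|\varphi|$, uses interior positivity to replace $\varphi$ by $|\varphi|$ up to sign, then cites elliptic regularity for the abstract eigenfunction $\varphi \in \mathcal{D}(S^{N})$ to get $\varphi \in C^{\infty}(M)$, and derives $\nu(\varphi)=0$ by integration by parts in the weak identity. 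Both routes work; yours is slightly more direct, at the cost of needing the boundary version of elliptic regularity for $|\varphi|$ rather than for $\varphi$. One small caveat about your opening paragraph: stating that ``the Friedrichs extension forces $\nu(\varphi)=0$'' and \emph{then} invoking boundary regularity is logically backwards --- $\nu(\varphi)$ only makes sense after regularity up to the boundary is established. The fix is the standard one: the Friedrichs-domain membership gives the weak Neumann formulation, boundary regularity gives $\varphi \in C^{\infty}(M)$, and then integration by parts in the weak formulation yields $\nu(\varphi)=0$. Also note that, once $|\varphi|>0$ everywhere, $\varphi = \pm|\varphi|$ by connectedness, so the smoothness of $\varphi$ and $\nu(\varphi)=0$ follow at once from the corresponding properties of $|\varphi|$, making your first paragraph largely redundant.
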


\begin{proof}
Since $\varphi \in \mathcal{D}(S^{N})$, there exists $(f_{n})_{n \in \mathbb{N}} \subset C^{\infty}_{c}(M)$, such that $f_{n} \rightarrow \varphi$ in $H_{V}(M)$. Clearly, $| f_{n} |$ is Lipschitz and compactly supported. From Proposition \ref{Approximation}, it follows that $|f_{n}| \in H_{V}(M)$.
From Rademacher's Theorem, $|f_{n}|$ is almost everywhere differentiable. Therefore, we have $\|\grad |f_{n}|\| = \|\grad f_{n}\|$ almost everywhere, and in particular, $\mathcal{R}_{S}(|f_{n}|) = \mathcal{R}_{S}(f_{n})$. Since $(| f_{n} |)_{n \in \mathbb{N}}$ is bounded in $H_{V}(M)$, it has a weakly convergent subsequence in $H_{V}(M)$. Since $| f_{n} | \rightarrow | \varphi |$ in $L^{2}(M)$, it follows that $| \varphi | \in H_{V}(M)$, and after passing to a subsequence, we have that $|f_{n}| \rightharpoonup | \varphi |$ in $H_{V}(M)$. Hence, $\mathcal{R}_{S}(| \varphi |) = \lambda_{0}^{N}(S)$.

In particular, for any $f \in C^{\infty}_{c}(M)$, the function $t \mapsto \mathcal{R}_{S}(|\varphi| + tf)$, with $|t| < \varepsilon$, is differentiable and attains minimum for $t=0$. This yields that
\begin{equation}\label{eigenfunction weakly}
\int_{M} (\langle \grad |\varphi| , \grad f  \rangle + V |\varphi| f) = \lambda_{0}^{N}(S) \int_{M} |\varphi| f,
\end{equation}
for any $f \in C^{\infty}_{c}(M)$. From Elliptic Regularity Theory, it follows that $|\varphi| \in C^{\infty}(M^{\circ})$ and $S|\varphi| = \lambda_{0}^{N}(S) |\varphi|$ in $M^{\circ}$. From Lemma \ref{Eigenfunction auxiliary}, $| \varphi |$ is nowhere vanishing in the interior of $M$, and so is $\varphi$. If $M$ has empty boundary, this completes the proof.
If $M$ has non-empty boundary, then without loss of generality, we may assume that $\varphi$ is positive in the interior of $M$. Since $\varphi \in \mathcal{D}(S^{N})$ and $S^{N}\varphi = \lambda_{0}^{N}(S)\varphi$, from Elliptic Regularity Theory, it follows that $\varphi \in C^{\infty}(M)$. Moreover, from (\ref{eigenfunction weakly}) we have that
\[
\int_{\partial M} \nu(\varphi) f = \int_{M} f S \varphi - \int_{M} (\langle \grad \varphi , \grad f \rangle + V \varphi f) = 0,
\]
for any $f \in C^{\infty}_{c}(M)$. Therefore, $\nu(\varphi) = 0$ on $\partial M$, and from Lemma \ref{Eigenfunction auxiliary}, it follows that $\varphi$ is positive on $\partial M$. \qed
\end{proof}

\begin{proposition}\label{Decomposition}
Let $S$ be a Schr\"{o}dinger operator on a (connected) Riemannian manifold $M$, with $\lambda_{0}^{N}(S) \notin \sigma_{\ess}^{N}(S)$. Then for any compact $K \subset M$ of positive measure, we have
\[
\inf_{f} \mathcal{R}_{S}(f) > \lambda_{0}^{N}(S),
\]
where the infimum is taken over all non-zero $f \in \Lip_{c}(M)$, with $\supp f \cap K = \emptyset$.
\end{proposition}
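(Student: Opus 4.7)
The plan is to argue by contradiction using the two main tools already established: Proposition \ref{minimizing sequence}, which upgrades minimizing sequences to strongly convergent ones under the hypothesis $\lambda_{0}^{N}(S) \notin \sigma_{\ess}^{N}(S)$, and Proposition \ref{eigenfunction}, which says that any $\lambda_{0}^{N}(S)$-eigenfunction of $S^{N}$ is smooth and nowhere vanishing.

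First I would note that the inequality $\inf_{f} \mathcal{R}_{S}(f) \geq \lambda_{0}^{N}(S)$ is immediate from Proposition \ref{Bottom of Neumann}, so the content is to rule out equality. Suppose, toward a contradiction, that the infimum equals $\lambda_{0}^{N}(S)$. Then there exists a sequence $(f_{n})_{n \in \mathbb{N}} \subset \Lip_{c}(M)$ with $\supp f_{n} \cap K = \emptyset$, $\| f_{n} \|_{L^{2}(M)} = 1$, and $\mathcal{R}_{S}(f_{n}) \to \lambda_{0}^{N}(S)$.

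Applying Proposition \ref{minimizing sequence}, I would pass to a subsequence so that $f_{n_{k}} \to \varphi$ in $L^{2}(M)$ for some $\lambda_{0}^{N}(S)$-eigenfunction $\varphi$ of $S^{N}$. Since $\| \varphi \|_{L^{2}(M)} = 1$, the function $\varphi$ is non-zero, so by Proposition \ref{eigenfunction} it is smooth on $M$ and nowhere vanishing (including on $\partial M$). On the other hand, each $f_{n_{k}}$ vanishes on $K$, so the $L^{2}$-limit $\varphi$ vanishes almost everywhere on $K$. Because $\varphi$ is continuous and nowhere zero, $|\varphi|$ is bounded below by a positive constant on the compact set $K$, which has positive measure; this contradicts $\varphi = 0$ a.e.\ on $K$.

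I do not anticipate a serious obstacle, since the hard analytic work has already been packaged into the two cited propositions; the only point worth double-checking is that the convergence $f_{n_{k}} \to \varphi$ in $L^{2}(M)$ really forces $\varphi = 0$ almost everywhere on $K$, which is immediate because $\mathbf{1}_{K} f_{n_{k}} = 0$ in $L^{2}(M)$ for every $k$ and $L^{2}$-convergence passes to the product with the bounded function $\mathbf{1}_{K}$. The hypothesis that $K$ has positive measure is used precisely at the final step, when comparing the a.e.\ vanishing of $\varphi$ on $K$ with its pointwise positivity.
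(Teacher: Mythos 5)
Your proof is correct and follows exactly the paper's argument: pass from a minimizing sequence supported off $K$ to an $L^{2}$-limit $\varphi$ via Proposition \ref{minimizing sequence}, invoke Proposition \ref{eigenfunction} to get that $\varphi$ is nowhere vanishing, and derive a contradiction from the fact that $\varphi$ must vanish a.e.\ on the positive-measure set $K$. The paper phrases the final step as $\|\varphi - f_{n}\|_{L^{2}(M)}^{2} \geq \int_{K}\varphi^{2} > 0$, which is the same observation you make.
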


\begin{proof}
Assume to the contrary that there exists a compact subset $K$ of $M$ of positive measure, such that for any $\varepsilon > 0$, there exists a non-zero $f \in \Lip_{c}(M)$, with $\mathcal{R}_{S}(f) < \lambda_{0}^{N}(S) + \varepsilon$ and $\supp f \cap K = \emptyset$. Evidently, there exists $(f_{n})_{n \in \mathbb{N}} \subset \Lip_{c}(M)$, with $\| f_{n} \|_{L^{2}(M)} = 1$, $\supp f_{n} \cap K =\emptyset$ and $\mathcal{R}_{S}(f_{n}) \rightarrow \lambda_{0}^{N}(S)$. From Proposition \ref{minimizing sequence}, after passing to a subsequence, we have that $f_{n} \rightarrow \varphi$ in $L^{2}(M)$, for some $\lambda_{0}^{N}(S)$-eigenfunction $\varphi$ of $S^{N}$. Since $\| \varphi \|_{L^{2}(M)} = 1$, from Proposition \ref{eigenfunction}, it follows that $\varphi$ is nowhere vanishing in $M$. This is a contradiction, since
\[
\| \varphi - f_{n} \|_{L^{2}(M)}^{2} \geq \int_{K} \varphi^{2} > 0,
\]
while $f_{n} \rightarrow \varphi$ in $L^{2}(M)$. This proves the asserted claim. \qed
\end{proof}\medskip


We end this section with the notion of renormalized Schr\"{o}dinger operators. This notion was introduced for the Laplacian in \cite{Brooks2} and for Schr\"{o}dinger operators on manifolds without boundary in \cite{Mine}.

Let $M$ be a possibly non-connected Riemannian manifold and $S = \Delta + V$ a Schr\"{o}dinger operator on $M$. Let $\varphi \in C^{\infty}(M)$ be a positive function, satisfying $S \varphi  = \lambda \varphi$, for some $\lambda \in \mathbb{R}$. If $M$ has non-empty boundary, assume that $\nu(\varphi) = 0$ on $\partial M$. Consider the space $L^{2}_{\varphi}(M) := \{ [v] : \varphi v \in L^{2}(M) \}$, where two measurable functions are equivalent if they are almost everywhere equal, endowed with the inner product $\langle v_{1} ,v_{2} \rangle_{L^{2}_{\varphi}(M)} := \int_{M} v_{1} v_{2} \varphi^{2}$. Then the map $\mu_{\varphi} \colon L^{2}_{\varphi}(M) \to L^{2}(M)$, defined by $\mu_{\varphi}v := \varphi v$, is an isometric isomorphism. In particular, $L^{2}_{\varphi}(M)$ is a separable Hilbert space. The \textit{renormalization} $S_{\varphi}$ of $S$ with respect to $\varphi$ is defined by
\[
S_{\varphi}v := \mu_{\varphi}^{-1} (S^{N} - \lambda)(\mu_{\varphi}v), \text{ for all } v \in \mathcal{D}(S_{\varphi}) := \mu_{\varphi}^{-1}(\mathcal{D}(S^{N})).
\]
It is clear that $S_{\varphi} \colon \mathcal{D}(S_{\varphi}) \colon L^{2}_{\varphi}(M) \to L^{2}_{\varphi}(M)$ is self-adjoint and $\sigma(S_{\varphi}) = \sigma^{N}(S) - \lambda$. For a non-zero $f \in \Lip_{c}(M)$, the \textit{Rayleigh quotient} of $f$ with respect to $S_{\varphi}$ is defined as
\begin{equation*}\label{Rayleigh quotient renormalized}
\mathcal{R}_{S_{\varphi}}(f) := \frac{\int_{M} \| \grad f \|^{2} \varphi^{2}}{\int_{M} f^{2}\varphi^{2}}.
\end{equation*}

\begin{proposition}[{{\cite[Proposition 7.1]{Mine},\cite[Subsection 2.1]{BMP2}}}]\label{renormalization without boundary}
In the above situation, if $M$ has empty boundary, then the bottom of the spectrum of $S_{\varphi}$ is given by
\[
\lambda_{0}(S) - \lambda = \lambda_{0}(S_{\varphi}) = \inf_{f \in C^{\infty}_{c}(M) \smallsetminus \{0\}} \mathcal{R}_{S_{\varphi}}(f) = \inf_{f \in \Lip_{c}(M) \smallsetminus \{0\}} \mathcal{R}_{S_{\varphi}}(f).
\]
\end{proposition}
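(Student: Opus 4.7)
The plan is to reduce everything to Proposition \ref{Bottom of Neumann} applied to $S$, via the isometry $\mu_\varphi$. First I would observe that, since $\mu_\varphi \colon L^{2}_{\varphi}(M) \to L^{2}(M)$ is an isometric isomorphism and $S_\varphi = \mu_\varphi^{-1}(S^{N}-\lambda)\mu_\varphi$ by definition, $S_\varphi$ is unitarily equivalent to $S^{N}-\lambda$. Therefore $\sigma(S_\varphi) = \sigma^{N}(S)-\lambda$ and in particular $\lambda_0(S_\varphi) = \lambda_0(S)-\lambda$, which handles the first equality in the statement.

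The core of the argument is the pointwise/integration-by-parts identity
\[
\mathcal{R}_{S}(\varphi f) \;=\; \mathcal{R}_{S_{\varphi}}(f) + \lambda
\]
for every $f \in \Lip_c(M)\smallsetminus\{0\}$. To derive it, I would expand $\grad(\varphi f) = \varphi\grad f + f\grad\varphi$ so that
\[
\|\grad(\varphi f)\|^{2} = \varphi^{2}\|\grad f\|^{2} + 2f\varphi\langle\grad\varphi,\grad f\rangle + f^{2}\|\grad\varphi\|^{2}.
\]
Next, I would integrate by parts with the compactly supported Lipschitz function $u := f^{2}\varphi$ against the smooth function $\varphi$ and use the eigenvalue equation $\Delta\varphi = (\lambda-V)\varphi$ to get
\[
\int_{M}\bigl(2f\varphi\langle\grad\varphi,\grad f\rangle + f^{2}\|\grad\varphi\|^{2}\bigr) = \int_{M}\langle\grad\varphi,\grad(f^{2}\varphi)\rangle = \int_{M}f^{2}\varphi\,\Delta\varphi = \int_{M}(\lambda-V)f^{2}\varphi^{2}.
\]
Adding $\int_{M}V\varphi^{2}f^{2}$ to both sides and combining with the expansion above yields
\[
\int_{M}\bigl(\|\grad(\varphi f)\|^{2}+V(\varphi f)^{2}\bigr) = \int_{M}\|\grad f\|^{2}\varphi^{2} + \lambda\int_{M}\varphi^{2}f^{2},
\]
which, after dividing by $\|\varphi f\|_{L^{2}(M)}^{2} = \|f\|_{L^{2}_{\varphi}(M)}^{2}$, is exactly the desired identity.

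To conclude, I would note that because $\varphi \in C^{\infty}(M)$ is positive (so $1/\varphi$ is smooth), multiplication by $\varphi$ is a bijection of $\Lip_c(M)\smallsetminus\{0\}$ onto itself, and likewise of $C^{\infty}_{c}(M)\smallsetminus\{0\}$ onto itself. Taking the infimum of the identity above over either space and invoking Proposition \ref{Bottom of Neumann} applied to $S$ (whose Friedrichs and Neumann extensions coincide, since $\partial M = \emptyset$) gives
\[
\inf_{f}\mathcal{R}_{S_{\varphi}}(f) \;=\; \inf_{g}\mathcal{R}_{S}(g)-\lambda \;=\; \lambda_{0}(S)-\lambda \;=\; \lambda_{0}(S_{\varphi}),
\]
with the infima taken over $C^{\infty}_{c}(M)\smallsetminus\{0\}$ or $\Lip_c(M)\smallsetminus\{0\}$, establishing all the claimed equalities.

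The only delicate point is justifying the integration-by-parts identity $\int_{M}\langle\grad\varphi,\grad u\rangle = \int_{M}u\,\Delta\varphi$ when $u = f^{2}\varphi$ is merely Lipschitz and compactly supported. This is routine: $u$ lies in $H^{1}_{0}$ of any smoothly bounded compact neighbourhood of its support, so approximation by $C^{\infty}_{c}$ functions in $H^{1}$, together with the smoothness of $\varphi$ on the fixed neighbourhood, transfers the identity from the smooth case to the Lipschitz case. Everything else is a direct manipulation.
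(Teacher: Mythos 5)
Your proof is correct and rests on the same ground-state-transformation integration by parts that the paper itself uses to prove the analogous Proposition~\ref{renormalization Neumann} (the boundaryless case being cited from external references). The identity $\mathcal{R}_{S}(\varphi f)=\mathcal{R}_{S_{\varphi}}(f)+\lambda$ you derive is the Rayleigh-quotient form of the paper's computation $\langle S_{\varphi}f,f\rangle_{L^{2}_{\varphi}(M)}=\int_{M}\|\grad f\|^{2}\varphi^{2}$, and combining it with the bijectivity of multiplication by $\varphi$, Proposition~\ref{Bottom of Neumann}, and the unitary equivalence giving $\sigma(S_{\varphi})=\sigma(S)-\lambda$ completes the argument exactly as intended.
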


\begin{proposition}\label{renormalization Neumann}
In the above situation, if $M$ has non-empty boundary, then the bottom of the spectrum of $S_{\varphi}$ is given by
\[
\lambda_{0}^{N}(S) - \lambda = \lambda_{0}(S_{\varphi}) = \inf_{f} \mathcal{R}_{S_{\varphi}}(f),
\]
where the infimum is taken over all non-zero $f \in C^{\infty}_{c}(M)$, with $\nu(f) = 0$ on $\partial M$.
\end{proposition}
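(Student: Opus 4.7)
The first equality $\lambda_0^N(S) - \lambda = \lambda_0(S_\varphi)$ is immediate from the relation $\sigma(S_\varphi) = \sigma^N(S) - \lambda$ recorded when $S_\varphi$ was defined, since $\mu_\varphi$ is an isometric isomorphism intertwining $S_\varphi$ with $S^N - \lambda$. The content lies in the Rayleigh-quotient characterization, and my plan rests on the following ground state transform identity: for every $f \in C^\infty_c(M)$ with $\nu(f) = 0$ on $\partial M$,
\[
\int_M \bigl(\|\grad(\varphi f)\|^2 + V(\varphi f)^2\bigr) \;=\; \lambda \int_M \varphi^2 f^2 \;+\; \int_M \|\grad f\|^2 \varphi^2,
\]
equivalently $\mathcal{R}_S(\varphi f) = \lambda + \mathcal{R}_{S_\varphi}(f)$.

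To derive this identity I would combine the product rule $\Delta(\varphi f) = (\Delta\varphi)f + \varphi\Delta f - 2\langle\grad\varphi,\grad f\rangle$ with the eigenvalue equation $\Delta\varphi = (\lambda - V)\varphi$, and then apply Green's formula twice. First, $\int_M (\varphi f) S(\varphi f) = \int_M (\|\grad(\varphi f)\|^2 + V(\varphi f)^2)$; the boundary term vanishes because $\nu(\varphi f) = f\nu(\varphi) + \varphi\nu(f) = 0$ on $\partial M$. Second, applied to $\int_M \varphi^2 f\,\Delta f$, the boundary term vanishes from $\nu(f) = 0$, giving $\int_M \varphi^2 f\,\Delta f = 2\int_M \varphi f\langle\grad\varphi,\grad f\rangle + \int_M \varphi^2 \|\grad f\|^2$. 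Substituting and observing that the cross terms $\int_M \varphi f\langle\grad\varphi,\grad f\rangle$ cancel yields the displayed identity.

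Since $\varphi$ is smooth and strictly positive on all of $M$ (including $\partial M$) with $\nu(\varphi) = 0$, the map $f \mapsto \varphi f$ is a bijection of $\{f \in C^\infty_c(M)\smallsetminus\{0\} : \nu(f) = 0 \text{ on } \partial M\}$ with itself, because the inverse $g \mapsto g/\varphi$ satisfies $\nu(g/\varphi) = (\varphi\,\nu(g) - g\,\nu(\varphi))/\varphi^2 = 0$ on $\partial M$. Proposition \ref{Bottom of Friedrichs}, applied with $\mathcal{H}^\prime = \{f \in C^\infty_c(M) : \nu(f) = 0 \text{ on } \partial M\} = \mathcal{D}(T)$ for the pre-Neumann realization, asserts $\lambda_0^N(S) = \inf_g \mathcal{R}_S(g)$ over non-zero such $g$. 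Composing this with the bijection and the ground state transform then yields
\[
\inf_f \mathcal{R}_{S_\varphi}(f) \;=\; \inf_g \mathcal{R}_S(g) - \lambda \;=\; \lambda_0^N(S) - \lambda,
\]
as claimed, where both infima run over non-zero smooth compactly supported functions with $\nu = 0$ on $\partial M$.

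The main technical obstacle is the careful double integration-by-parts in the ground state transform: two boundary terms appear, and the proof uses crucially that $\nu(\varphi) = 0$ kills one while $\nu(f) = 0$ kills the other; once both are disposed of, the remaining equality is a clean algebraic cancellation. The bijectivity of $f \leftrightarrow \varphi f$ hinges on $\varphi > 0$ up to $\partial M$, but this is part of the hypothesis.
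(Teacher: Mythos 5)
Your proposal is correct and follows essentially the same route as the paper: both rest on the ground state transform identity $\mathcal{R}_S(\varphi f) = \lambda + \mathcal{R}_{S_\varphi}(f)$ (which the paper obtains by a single integration by parts showing $\langle S_\varphi f, f\rangle_{L^2_\varphi} = \int_M \|\grad f\|^2 \varphi^2$, and you obtain by two) together with the fact that $g \mapsto g/\varphi$ preserves the class of smooth, compactly supported test functions with vanishing normal derivative. The paper phrases the conclusion as a Rayleigh lower bound plus a minimizing sequence $f_n = \mu_\varphi^{-1} g_n$, while you phrase it as a bijection equating the two infima; these are the same argument in different packaging.
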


\begin{proof}
Let $f \in C^{\infty}_{c}(M)\smallsetminus\{0\}$, with $\nu(f) = 0$ on $\partial M$. Since $\varphi$ is smooth and $\nu(\varphi) = 0$ on $\partial M$, it follows that $f \in \mathcal{D}(S_{\varphi})$. It is easy to see that
\[
S_{\varphi}f = \Delta f - \frac{2}{\varphi}\langle \grad \varphi , \grad  f \rangle.
\]
Hence, we have
\begin{eqnarray}
\langle S_{\varphi} f , f \rangle_{L^{2}_{\varphi}(M)} &=& \int_{M} (\varphi^{2}f \Delta f - 2 f \varphi \langle \grad f, \grad \varphi \rangle) \nonumber \\
&=& \int_{M} (\langle \grad (\varphi^{2}f), \grad f \rangle - 2 f \varphi \langle \grad f, \grad \varphi \rangle) + \int_{\partial M} \varphi^{2} f \nu(f) \nonumber \\
&=& \int_{M} \| \grad f \|^{2} \varphi ^{2},\nonumber
\end{eqnarray}
where we used that $\nu(f) = 0$ on $\partial M$. In particular, we have that
\[
\mathcal{R}_{S_{\varphi}}(f) = \frac{\langle S_{\varphi} f ,f \rangle_{L^{2}_{\varphi}(M)}}{\| f \|_{L^{2}_{\varphi}(M)}^{2}}.
\]
From Proposition \ref{Rayleigh}, it follows that $\mathcal{R}_{S_{\varphi}}(f) \geq \lambda_{0}(S_{\varphi})$.
From Proposition \ref{Bottom of Friedrichs}, there exists $(g_{n})_{n \in \mathbb{N}} \subset C^{\infty}_{c}(M) \smallsetminus \{0\}$, with $\nu(g_{n})=0$ on $\partial M$ and $\mathcal{R}_{S}(g_{n}) \rightarrow \lambda_{0}^{N}(S)$. Consider $f_{n} := \mu_{\varphi}^{-1}g_{n}$. Then $f_{n} \in C^{\infty}_{c}(M)$, $\nu(f_{n}) = 0$ on $\partial M$, and $\mathcal{R}_{S_{\varphi}} (f_{n})\rightarrow \lambda_{0}(S_{\varphi})$. This proves the asserted equality. \qed

\end{proof}

\section{Coverings of compact manifolds}\label{Section Compact}

Throughout this section, for simplicity of notation, we denote by $\mathcal{R}(f)$ the Rayleigh quotient of a Lipschitz function $f$ with respect to the Laplacian. 
The aim of this section is to prove Theorem \ref{Theorem Neumann}. Since a part of it follows from Theorem \ref{Old result for Neumann}, it remains to prove the converse implication. For reasons that will become clear in the sequel, we need to establish it also for non-connected covering spaces.


\begin{theorem}\label{Neumann compact}
Let $p \colon M_{2} \to M_{1}$ be a Riemannian covering, with $M_{1}$ compact with non-empty boundary, and $M_{2}$ possibly non-connected. If $\lambda_{0}^{N}(M_{2}) = 0$, then $p$ is amenable.
\end{theorem}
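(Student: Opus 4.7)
The natural approach is to double both manifolds across their boundaries, reducing to the closed case, where Theorem \ref{weak converse} applies. Let $\tilde{M}_{1}$ denote the double of $M_{1}$, equipped with a smooth Riemannian metric chosen so that on each of the two copies of $M_{1}$ it agrees with the original metric outside an arbitrarily thin collar of $\partial M_{1}$, and so that the smooth and the glued metrics are uniformly equivalent on that collar (with ratio as close to $1$ as desired). Correspondingly, let $\tilde{M}_{2}$ be the double of $M_{2}$, possibly non-connected, and $\tilde{p} \colon \tilde{M}_{2} \to \tilde{M}_{1}$ the induced Riemannian covering. Constant functions on $\tilde{M}_{1}$ yield $\lambda_{0}(\tilde{M}_{1}) = 0$.

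The main step is to show $\lambda_{0}(\tilde{M}_{2}) = 0$. By Proposition \ref{Bottom of Neumann} and the hypothesis $\lambda_{0}^{N}(M_{2}) = 0$, there is a sequence $(f_{n}) \subset \Lip_{c}(M_{2})$ with $\| f_{n} \|_{L^{2}(M_{2})} = 1$ and $\mathcal{R}(f_{n}) \to 0$ in the original metric of $M_{2}$. Reflecting each $f_{n}$ across $\partial M_{2}$ yields a Lipschitz function $\tilde{f}_{n} \in \Lip_{c}(\tilde{M}_{2})$; with the glued Riemannian metric on $\tilde{M}_{2}$, the two copies of $M_{2}$ are isometric, so $\mathcal{R}(\tilde{f}_{n}) = \mathcal{R}(f_{n}) \to 0$ in that metric. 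Passing to the smooth metric, uniform equivalence of the two metrics on the collar gives $\mathcal{R}_{\mathrm{smooth}}(\tilde{f}_{n}) \leq C\, \mathcal{R}(f_{n}) \to 0$ for some constant $C$ independent of $n$, so $\lambda_{0}(\tilde{M}_{2}) = 0$ in the smooth metric as well. Since $\tilde{M}_{1}$ is closed, it has bounded Ricci curvature and empty essential spectrum; thus Theorem \ref{weak converse}, applied to the Laplacian (the Schr\"{o}dinger operator with $V = 0$), yields amenability of $\tilde{p}$.

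It remains to descend amenability from $\tilde{p}$ to $p$. Fix $x$ in the interior of the first copy of $M_{1}$ inside $\tilde{M}_{1}$; the fiber decomposes as $\tilde{p}^{-1}(x) = A \sqcup B$, where $A := p^{-1}(x)$ sits in the first copy of $M_{2}$ and $B$ in the mirror copy. The inclusion $\pi_{1}(M_{1}) \hookrightarrow \pi_{1}(\tilde{M}_{1})$ is injective (a retraction of $\tilde{M}_{1}$ onto $M_{1}$ folding the mirror copy onto the original provides a left inverse); its elements are represented by loops in the first copy of $M_{1}$, whose lifts remain in the starting copy of $M_{2}$, so this subgroup preserves $A$, and its action on $A$ coincides with the original deck action of $p$. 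Given a $\pi_{1}(\tilde{M}_{1})$-invariant mean $\mu$ on $L^{\infty}(\tilde{p}^{-1}(x))$, the identity $\mu(\chi_{A}) + \mu(\chi_{B}) = 1$ forces at least one of the two to be positive; assuming $\mu(\chi_{A}) > 0$, the normalized functional $\nu(h) := \mu(\tilde{h})/\mu(\chi_{A})$, where $\tilde{h}$ extends $h \in L^{\infty}(A)$ by zero on $B$, is a $\pi_{1}(M_{1})$-invariant mean on $L^{\infty}(p^{-1}(x))$, establishing amenability of $p$. The principal technical subtlety is the smoothing: modifying the glued metric in a collar of $\partial M_{1}$ must not destroy $\lambda_{0}(\tilde{M}_{2}) = 0$, which is ensured by the uniform equivalence of the glued and smooth metrics on that collar.
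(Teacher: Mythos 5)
Your proposal is correct but takes a genuinely different route from the paper, and it is arguably the more elementary of the two. The paper modifies the metric to be product near $\partial M_1$ (its Lemma \ref{metric}), glues an infinite cylinder $\partial M_1 \times [0,+\infty)$ to obtain a complete open manifold $N_1$, and then has to produce a Schr\"{o}dinger operator $S_1 = \Delta + V$ (with exponentially decaying positive eigenfunction $\varphi$) whose bottom is $0$ and whose essential spectrum is bounded away from $0$; the hard step is Lemma \ref{approximation}, which manufactures test functions on $M_2$ with simultaneously small interior and boundary Rayleigh quotients, so that after extending them down the cylinders via (\ref{extension}) one still has small Rayleigh quotient for the renormalized operator $S_{\tilde{\varphi}}$. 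Your doubling argument bypasses all of this: $\tilde{M}_1$ is closed, so $V=0$ and $\varphi \equiv 1$ suffice; reflecting $f_n$ across $\partial M_2$ manifestly preserves the Rayleigh quotient in the glued metric, and passing to a smooth metric uniformly equivalent to the glued one (or, equivalently, first normalizing the metric to be product near $\partial M_1$ as in Lemma \ref{metric}, so that the doubled metric is already smooth) only changes it by a bounded factor. The hypotheses of Theorem \ref{weak converse} are then met in the easiest possible way, since closed manifolds trivially have bounded Ricci curvature and empty essential spectrum. What the paper's route buys is a richer kit (the renormalization formalism of Proposition \ref{renormalization Neumann}, the boundary-Rayleigh estimate of Lemma \ref{approximation}) that is reused elsewhere; what yours buys is a proof of this particular theorem with almost no machinery.

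One small inaccuracy in your descent step: since $x$ lies in the interior of the first copy of $M_1$ and $\tilde{p}$ maps the first copy of $M_2$ to the first copy of $M_1$ and the mirror copy to the mirror copy, the fiber $\tilde{p}^{-1}(x)$ is entirely contained in the first copy of $M_2$; that is, $\tilde{p}^{-1}(x) = p^{-1}(x)$ and the set $B$ you introduce is empty. The normalization by $\mu(\chi_A)$ and the concern about the case $\mu(\chi_A) = 0$ are therefore vacuous. The correct statement is simpler: the $\pi_1(\tilde{M}_1)$-invariant mean is already a mean on $L^\infty(p^{-1}(x))$, and since $\pi_1(M_1)$ injects into $\pi_1(\tilde{M}_1)$ via the fold retraction and acts on $p^{-1}(x)$ exactly as the deck action of $p$, that mean is in particular $\pi_1(M_1)$-invariant, whence $p$ is amenable.
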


Let $\nu_{i}$ be the inward pointing normal to $\partial M_{i}$, $i=1,2$. Then there exists $\delta > 0$, such that the map $\Phi \colon \partial M_{1} \times [0,2 \delta) \to M_{1}$, defined by $\Phi(x,t) := \exp_{x}(t\nu_{1})$, is a diffeomorphism onto its image. By definition, any point of $M_{1}$ has an evenly covered neighborhood with respect to the restriction of $p$ on any connected component of $M_{2}$. Therefore, we may assume that $\delta$ is sufficiently small, so that for any $x \in \partial M_{1}$ and $y_{1},y_{2} \in p^{-1}(x)$, with $y_{1} \neq y_{2}$, we have $d(y_{1},y_{2}) \geq 2\delta$. It is worth to point out that we consider the distance of points of different connected components of $M_{2}$ to be infinite.

\begin{lemma}\label{Psi injective}
The map $\Psi \colon \partial M_{2} \times [0,\delta) \to M_{2}$, defined by $\Psi(y,t) := \exp_{y}(t\nu_{2})$, is a diffeomorphism onto its image.
\end{lemma}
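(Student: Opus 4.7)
The plan is to verify three things in turn: that $\Psi$ is well-defined and smooth on $\partial M_{2} \times [0,\delta)$, that it is a local diffeomorphism, and that it is globally injective. Once these are established, an injective local diffeomorphism onto its image is automatically a diffeomorphism.

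For the first two points, the key observation is that since $p$ is a Riemannian covering of manifolds with smooth boundary, geodesics lift to geodesics and the inward normal satisfies $p_{*}\nu_{2} = \nu_{1}$. Consequently,
\[
p \circ \Psi(y,t) = \exp_{p(y)}(t\nu_{1}) = \Phi(p(y),t),
\]
so the image of the normal geodesic segment starting at $y \in \partial M_{2}$ sits over the image of $\Phi$, which is contained in $M_{1}$ for $t \in [0,2\delta)$. In particular, $\Psi$ is defined on all of $\partial M_{2} \times [0,\delta)$, takes values in $M_{2}$, and is smooth. Since $\Phi$ is a diffeomorphism on $\partial M_{1} \times [0,2\delta)$ and $p$ is a local diffeomorphism, the identity $p \circ \Psi = \Phi \circ (p|_{\partial M_{2}} \times \id)$ and the chain rule show that $d\Psi$ is everywhere an isomorphism; thus $\Psi$ is a local diffeomorphism.

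For injectivity, suppose $\Psi(y_{1},t_{1}) = \Psi(y_{2},t_{2})$. Applying $p$ and using the identity above together with the injectivity of $\Phi$ on $\partial M_{1} \times [0,2\delta)$, we obtain $p(y_{1}) = p(y_{2}) =: x$ and $t_{1} = t_{2} =: t < \delta$. If $y_{1} \neq y_{2}$, then by the choice of $\delta$, we have $d(y_{1},y_{2}) \geq 2\delta$; in particular $y_{1}$ and $y_{2}$ lie in the same connected component of $M_{2}$. On the other hand, the normal geodesic segments from $y_{i}$ to $\Psi(y_{i},t)$ have length $t < \delta$, so by the triangle inequality
\[
d(y_{1},y_{2}) \leq d(y_{1},\Psi(y_{1},t)) + d(\Psi(y_{2},t),y_{2}) \leq 2t < 2\delta,
\]
a contradiction. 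Hence $y_{1} = y_{2}$, which combined with $t_{1} = t_{2}$ gives injectivity.

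The argument is essentially bookkeeping; the only subtle point is coordinating the choice of $\delta$ so that both the exponential normal of $\partial M_{1}$ is a diffeomorphism and distinct preimages in $p^{-1}(x)$ over boundary points are $2\delta$-separated. Both conditions have been built into the choice of $\delta$ preceding the lemma, so the argument goes through cleanly and the main obstacle is really just checking that the distance convention for points in possibly different connected components of $M_{2}$ does not create trouble, which it does not, since the contradiction from injectivity automatically forces the relevant points into a single component.
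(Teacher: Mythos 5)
Your proposal is correct and follows essentially the same approach as the paper: you use the identity $p \circ \Psi(y,t) = \Phi(p(y),t)$ to deduce that $\Psi$ is a local diffeomorphism, and for injectivity you combine the injectivity of $\Phi$ (forcing $p(y_1)=p(y_2)$ and $t_1=t_2$) with the triangle-inequality bound $d(y_1,y_2) < 2\delta$ and the $2\delta$-separation of fibers built into the choice of $\delta$. The only cosmetic difference is the order in which these two ingredients are invoked and the explicit phrasing as a proof by contradiction.
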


\begin{proof}
Since $(p \circ \Psi) (y,t) = \Phi(p(y),t)$, for any $y \in \partial M_{2}$ and $t \in [0,\delta)$, it is clear that $\Psi$ is a local diffeomorphism. So, it suffices to prove that it is injective. Consider $y_{1},y_{2} \in \partial M_{2}$ and $t_{1},t_{2} \in [0,\delta)$, such that $\Psi(y_{1},t_{1}) = \Psi(y_{2},t_{2}) =:z$. Then $d(y_{i},z) < \delta$, $i=1,2$, which yields that $d(y_{1},y_{2}) < 2 \delta$. Moreover, it follows that $\Phi(p(y_{1}) , t_{1}) = \Phi(p(y_{2}),t_{2})$. Since $\Phi$ is a diffeomorphism onto its image, this yields that $t_{1} = t_{2}$, $p(y_{1}) = p(y_{2})$, and in particular, $y_{1} = y_{2}$. \qed
\end{proof}

\begin{lemma}\label{metric}
There exists a Riemannian metric $\mathrm{g}^{\prime}$ on $M_{1}$, such that $\Phi$ restricted to $\partial M_{1} \times [0,\delta)$ is an isometry onto its image.
\end{lemma}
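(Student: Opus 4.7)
The plan is to define $\mathrm{g}'$ so that it agrees with the pullback of the product metric $\mathrm{g}_{\partial M_1} + dt^2$ on the collar $\Phi(\partial M_1 \times [0,\delta))$, agrees with the original metric $\mathrm{g}$ outside a slightly thicker collar $\Phi(\partial M_1 \times [0,2\delta))$, and interpolates smoothly between these two on the intermediate annular region. Since a convex combination of inner products on a vector space is again an inner product, this will automatically yield a smooth Riemannian metric.

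Concretely, let $\mathrm{g}_{\partial M_1}$ denote the metric induced by $\mathrm{g}$ on $\partial M_1$, and on the collar $W := \Phi(\partial M_1 \times [0, 2\delta))$ define the product metric
\[
\mathrm{h} := (\Phi^{-1})^{*}(\mathrm{g}_{\partial M_1} + dt^2),
\]
which by construction makes $\Phi|_{\partial M_1 \times [0,\delta)}$ an isometry onto its image. Choose a smooth cutoff $\chi \colon [0, 2\delta) \to [0,1]$ with $\chi \equiv 1$ on $[0,\delta]$ and $\chi \equiv 0$ on a neighborhood of $2\delta$, and define $\rho \in C^{\infty}(M_1)$ by $\rho := \chi \circ \pi_2 \circ \Phi^{-1}$ on $W$ and $\rho \equiv 0$ on $M_1 \smallsetminus W$; this is well defined and smooth since $\rho$ vanishes near $\partial W \cap M_1^\circ$. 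Set
\[
\mathrm{g}' := \rho \cdot \mathrm{h} + (1-\rho) \cdot \mathrm{g} \quad \text{on } W, \qquad \mathrm{g}' := \mathrm{g} \quad \text{on } M_1 \smallsetminus W.
\]
These definitions agree where both apply (because $\rho \equiv 0$ there), so $\mathrm{g}'$ is a globally smooth symmetric $(0,2)$-tensor field on $M_1$.

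At every point, $\mathrm{g}'$ is either $\mathrm{g}$, $\mathrm{h}$, or a convex combination of the two; hence it is positive definite, so $\mathrm{g}'$ is a Riemannian metric. Finally, on $\Phi(\partial M_1 \times [0,\delta))$ we have $\rho \equiv 1$, so $\mathrm{g}' = \mathrm{h}$ there, which is exactly the condition that $\Phi|_{\partial M_1 \times [0,\delta)}$ be an isometry onto its image. The only subtle points are the smooth extension of $\rho$ across $\partial W$ (handled by the choice of $\chi$ vanishing near $2\delta$) and the positive definiteness of the interpolated metric (immediate from convexity), neither of which presents a genuine obstacle.
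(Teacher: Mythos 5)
Your proof is correct and takes essentially the same approach as the paper: define the collar product metric, choose a cutoff function of the collar coordinate $t$ that equals $1$ on $[0,\delta]$ and vanishes before the end of the collar, and interpolate linearly between the two metrics. The paper chooses the cutoff to vanish for $t \geq 3\delta/2$ rather than merely near $2\delta$, but this is an inessential variation.
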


\begin{proof}
Let $\mathrm{g}_{c}$ be the push-forward of the product metric of $\partial M_{1} \times [0,2\delta)$ via $\Phi$. Denote by $\mathrm{g}$ the original Riemannian metric of $M_{1}$. Consider a smooth $\tau \colon [0,2\delta) \to [0,1]$, with $\tau(t) = 1$ for $t \leq \delta$, and $\tau (t) = 0$ for $t \geq 3\delta/2$. Consider the function $\tau^{\prime}\in C^{\infty}(M_{1})$, defined by $\tau^{\prime}(\Phi(x,t)) = \tau(t)$ in $\Phi(\partial M_{1} \times [0,2\delta))$, and $\tau^{\prime} = 0$ otherwise. Evidently, the Riemannian metric
\[
\mathrm{g}^{\prime} := \tau^{\prime} \mathrm{g}_{c} + (1-\tau^{\prime}) \mathrm{g}.
\]
on $M_{1}$ satisfies the desired property. \qed
\end{proof}\medskip

Consider $M_{1}$ and $M_{2}$ endowed with $\mathrm{g}^{\prime}$ and its lift, respectively. Since $(p \circ \Psi)(y,t) = \Phi (p(y),t)$, for any $y \in \partial M_{2}$ and $t\in[0,\delta)$, it follows that $\Psi$ restricted on $\partial M_{2} \times [0,\delta)$ is a local isometry, with respect to the lift of $\mathrm{g}^{\prime}$. From Lemma \ref{Psi injective}, this map is also injective, which yields that it is an isometry onto its image.
Denote by $U_{t}$ the open set $\Psi(\partial M_{2} \times [0,t))$, and by $C_{t}$ the closed set $\Psi(\partial M_{2} \times \{t\})$.

Evidently, there exist $c_{1}, c_{2} > 0$, such that for any $f \in C^{\infty}(M_{2})$, the norms of the gradients of $f$ with respect to the lifts of $\mathrm{g}$ and $\mathrm{g}^{\prime}$, are related by
\[
c_{1}\| {\grad} _{\mathrm{g}} f \|_{\mathrm{g}} \leq \| {\grad}_{\mathrm{g}^{\prime}} f \|_{\mathrm{g}^{\prime}} \leq c_{2} \| {\grad}_{\mathrm{g}} f \|_{\mathrm{g}}.
\]
Moreover, there exists a positive, smooth $\mathcal{V} \colon M_{1} \to \mathbb{R}$, such that the volume elements induced by the lifts of $\mathrm{g}$ and $\mathrm{g}^{\prime}$, satisfy
\[
\frac{d {\Vol} _{\mathrm{g}^{\prime}}}{d {\Vol}_{\mathrm{g}}} = \mathcal{V} \circ p.
\]
Therefore, for any non-zero $f \in C^{\infty}_{c}(M_{2})$, the Rayleigh quotients of $f$ with respect to the Laplacians induced by the lifts of $\mathrm{g}$ and $\mathrm{g}^{\prime}$, satisfy
\[
\mathcal{R}_{\mathrm{g}^{\prime}}(f) = \frac{\int_{M_{2}} \| \grad_{\mathrm{g}^{\prime}} f \|^{2}_{\mathrm{g}^{\prime}} d \Vol_{\mathrm{g}^{\prime}} }{\int_{M_{2}} f^{2} d \Vol_{\mathrm{g}^{\prime}}} \leq c_{2}^{2} \frac{\max \mathcal{V}}{\min \mathcal{V}} \mathcal{R}_{\mathrm{g}}(f).
\]
From Proposition \ref{Bottom of Neumann}, since $\lambda_{0}^{N}(M_{2}) = 0$ with respect to the lift of $\mathrm{g}$, it follows that $\lambda_{0}^{N}(M_{2}) = 0$ with respect to the lift of $\mathrm{g}^{\prime}$. From now on, we will be working with $\mathrm{g}^{\prime}$ and its lift. It is worth to point out that the maps $\Phi$ and $\Psi$ are defined in terms of the exponentials with respect to the original Riemannian metrics.

\begin{lemma}\label{approximation}
For any $\varepsilon > 0$, there exists $f \in \Lip_{c}(M_{2})$, smooth on $M_{2} \smallsetminus C_{t_{0}}$, for one $t_{0} \in (0,\delta)$, with $f|_{\partial M_{2}}$ non-zero, such that $\mathcal{R}(f) \leq \varepsilon$ and
\[
\frac{\int_{\partial M_{2}} \| \grad f \|^{2}}{\int_{\partial M_{2}} f^{2}} \leq \varepsilon.
\]
\end{lemma}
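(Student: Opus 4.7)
\emph{Plan.} The strategy is to construct $f$ as a \emph{normal extension} of a near-minimizer of the Rayleigh quotient for the Laplacian on $M_{2}$. Because $\mathrm{g}^{\prime}$ makes $\Psi$ restricted to $\partial M_{2}\times[0,\delta)$ an isometric embedding, the collar $U_{\delta}$ carries a product metric, and gradients split cleanly into a tangential part along the slices $C_{t}$ and a normal part. This product structure is the crucial geometric input.

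By Proposition \ref{Bottom of Neumann} and the hypothesis $\lambda_{0}^{N}(M_{2})=0$, for every $\eta>0$ there is $g\in C^{\infty}_{c}(M_{2})$ with $\|g\|_{L^{2}(M_{2})}=1$ and $\mathcal{R}(g)\leq\eta$. Given such a $g$ and some $t_{0}\in(0,\delta)$ with $\int_{C_{t_{0}}}g^{2}>0$, I would set $f:=g$ on $M_{2}\smallsetminus U_{t_{0}}$ and $f(\Psi(y,t)):=g(\Psi(y,t_{0}))$ for $(y,t)\in\partial M_{2}\times[0,t_{0}]$. Then $f\in\Lip_{c}(M_{2})$, smooth on $M_{2}\smallsetminus C_{t_{0}}$, with $f|_{\partial M_{2}}(y)=g(\Psi(y,t_{0}))$ non-zero. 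Using the product metric, $\grad f$ on $U_{t_{0}}$ has only a tangential component, constant in $t$, so a direct computation gives
\[
\int_{M_{2}}\|\grad f\|^{2}=\int_{M_{2}\smallsetminus U_{t_{0}}}\|\grad g\|^{2}+t_{0}\int_{C_{t_{0}}}\|\grad^{C_{t_{0}}}g\|^{2},\qquad \int_{M_{2}}f^{2}=\int_{M_{2}\smallsetminus U_{t_{0}}}g^{2}+t_{0}\int_{C_{t_{0}}}g^{2},
\]
together with $\int_{\partial M_{2}}\|\grad f\|^{2}=\int_{C_{t_{0}}}\|\grad^{C_{t_{0}}}g\|^{2}$ and $\int_{\partial M_{2}}f^{2}=\int_{C_{t_{0}}}g^{2}$, where $\grad^{C_{t_{0}}}$ denotes the intrinsic gradient on the slice $C_{t_{0}}$.

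To produce a good $t_{0}$ I would apply Fubini and a Markov-type argument in the collar: since
\[
\int_{0}^{\delta}\!\!\int_{C_{t}}\|\grad^{C_{t}}g\|^{2}\,dt\leq\int_{M_{2}}\|\grad g\|^{2}\leq\eta,\qquad \int_{0}^{\delta}\!\!\int_{C_{t}}g^{2}\,dt=\int_{U_{\delta}}g^{2},
\]
once we know $\int_{U_{\delta}}g^{2}\geq\rho$ for some $\rho>0$ independent of $\eta$, there must exist $t_{0}\in(0,\delta)$ with $\int_{C_{t_{0}}}g^{2}>0$ and $\int_{C_{t_{0}}}\|\grad^{C_{t_{0}}}g\|^{2}\leq(\eta/\rho)\int_{C_{t_{0}}}g^{2}$, so the boundary ratio is $\leq\eta/\rho$. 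For $\mathcal{R}(f)$, the denominator is bounded below by $\int_{M_{2}\smallsetminus U_{\delta}}g^{2}\geq 1-\rho$, the first term of the numerator is at most $\eta$, and the second is at most $t_{0}(\eta/\rho)\int_{C_{t_{0}}}g^{2}$; choosing $\eta$ small in terms of $\varepsilon$ and $\rho$, and restricting the Markov step to small $t_{0}$ among the admissible slices, makes both ratios $\leq\varepsilon$.

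The main obstacle is the uniform lower bound $\int_{U_{\delta}}g^{2}\geq\rho$. A priori the near-minimizers could concentrate arbitrarily deep in the interior of $M_{2}$, but because $M_{1}$ is compact with non-empty boundary, every point of $M_{2}$ lies within $\diam(M_{1})$ of $\partial M_{2}$, and when $\mathcal{R}(g)$ is small the support of $g$ should be F\o{}lner-like and hence large, forcing it to meet the collar in a controlled way. I would try to make this quantitative either by redistributing $g$ over a partition of $M_{2}$ into sets of uniformly bounded diameter that each reach $\partial M_{2}$, or by summing a bounded number of isometric translates of $g$ (via deck transformations of the covering) so as to place a definite fraction of $L^{2}$-mass inside $U_{\delta}$ without spoiling the Rayleigh quotient by more than a uniform factor.
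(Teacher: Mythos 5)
The core idea (flatten a near-minimizer from a good slice $C_{t_0}$ inward, in the collar where $\mathrm{g}^{\prime}$ is a product metric) is the same as the paper's, and your product-metric computations on $U_{t_0}$ are correct. But the argument has a genuine gap precisely at the step you flag, and the fallback ideas you sketch are not the right tools.

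Your Markov argument needs a lower bound $\int_{U_{\delta}}g^{2}\geq\rho$ with $\rho$ independent of the Rayleigh quotient $\eta$ of $g$. You observe you cannot prove it and propose to repair this by summing deck-transformation translates or by a partition of $M_{2}$ into pieces touching $\partial M_{2}$. Neither works in general: the covering is not assumed to be normal, so deck transformations need not exist in abundance, and in a partition argument there is no a priori control on how the Rayleigh quotient degrades under cutting and reassembling. More fundamentally, no such uniform $\rho$ exists or is proved in the paper. The paper's route is by contradiction and uses a different positivity input: if the slice ratio $\int_{C_{t}}\|\grad f_{n}\|^{2}/\int_{C_{t}}f_{n}^{2}$ exceeded $\varepsilon$ for every $n$ and every $t\in[0,\delta)$, then integrating in $t$ forces $\int_{U_{\delta}}f_{n}^{2}\to 0$; one can then multiply $f_{n}$ by a cut-off $\tilde\chi$ vanishing near $\partial M_{2}$ to obtain interior test functions with Rayleigh quotients tending to $0$, and this contradicts $\lambda_{0}^{D}(M_{2})\geq\lambda_{0}^{D}(M_{1})>0$ (Proposition \ref{Inequality of Bottoms} and Remark \ref{Dirichlet}, together with compactness of $M_{1}$). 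You do not invoke $\lambda_{0}^{D}(M_{1})>0$ at all, and it is precisely the missing ingredient.

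There is also a second gap, independent of the first. Even granting a uniform $\rho$, your bound on $\mathcal{R}(f)$ uses ``the denominator is bounded below by $\int_{M_{2}\smallsetminus U_{\delta}}g^{2}\geq 1-\rho$,'' which requires an \emph{upper} bound $\int_{U_{\delta}}g^{2}\leq\rho$, the opposite direction from the one you want to prove; with only $\int_{U_{\delta}}g^{2}\geq\rho$ one gets $\int_{M_{2}\smallsetminus U_{\delta}}g^{2}\leq 1-\rho$. In the regime where almost all mass lives in the collar, the denominator $\int_{M_{2}\smallsetminus U_{t_{0}}}g^{2}$ can be small while the piece $\int_{M_{2}\smallsetminus U_{t_{0}}}\|\grad g\|^{2}$ is not, and your estimate of $\mathcal{R}(f)$ breaks down. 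The paper handles this by choosing $t_{0}$ as the \emph{minimum} $t$ with good slice ratio, which guarantees that the flattened-region ratio of the original $f_{n}$ exceeds $\varepsilon$; combined with $\mathcal{R}(f_{n})<\varepsilon$ and the elementary inequality $\tfrac{a_{1}+a_{2}}{b_{1}+b_{2}}\geq\min\{\tfrac{a_{1}}{b_{1}},\tfrac{a_{2}}{b_{2}}\}$, this forces the exterior ratio $\int_{M_{2}\smallsetminus U_{t_{0}}}\|\grad f_{n}\|^{2}/\int_{M_{2}\smallsetminus U_{t_{0}}}f_{n}^{2}$ to be $<\varepsilon$, and the corresponding $\max$ inequality then gives $\mathcal{R}(f)\leq\varepsilon$. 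You would need to add some device of this kind; as written, the exterior ratio is not controlled.

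In short: keep the flat-extension construction and the Markov idea, but replace the unproved collar-mass bound by a contradiction argument based on $\lambda_{0}^{D}(M_{2})\geq\lambda_{0}^{D}(M_{1})>0$, and take $t_{0}$ minimal so that the exterior Rayleigh ratio of the original near-minimizer can be bounded via a $\min$/$\max$ decomposition.
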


\begin{proof}
Since $\lambda_{0}^{N}(M_{2}) = 0$, from Proposition \ref{Bottom of Neumann}, there exists $(f_{n})_{n \in \mathbb{N}} \subset C^{\infty}_{c}(M_{2})$, with $\| f_{n} \|_{L^{2}(M_{2})} = 1$, such that $\mathcal{R}(f_{n}) \rightarrow 0$. Assume that there exists $\varepsilon > 0$, such that for any $n \in \mathbb{N}$ and $t \in [0,\delta)$, we have
\begin{equation}\label{contradiction}
\int_{C_{t}} \| \grad f_{n} \|^{2} > \varepsilon \int_{C_{t}} f^{2}_{n}.
\end{equation}
Then
\[
\int_{U_{\delta}} \| \grad f_{n} \|^{2} > \varepsilon \int_{U_{\delta}} f_{n}^{2},
\]
which yields that $\int_{U_{\delta}} f_{n}^{2} \rightarrow 0$ and $\int_{M_{2} \smallsetminus U_{\delta}} f_{n}^{2} \rightarrow 1$.
Let $\chi \in C^{\infty}(M_{1})$, with $\chi(x) = 1$ for $d(x, \partial M_{1}) \geq \delta$, and $\chi(x) = 0$ for $d(x,\partial M_{1}) < \delta / 2$. Let $\tilde{\chi} \in C^{\infty}(M_{2})$ be the lift of $\chi$. Then $\tilde{\chi} = 0$ in $U_{\delta/2}$ and $\tilde{\chi} = 1$ outside $U_{\delta}$. For $g_{n} := \tilde{\chi} f_{n} \in C^{\infty}_{c}(M_{2})$, we have


\[
\| g_{n} \|_{L^{2}(M_{2})}^{2} = \int_{U_{\delta}} \tilde{\chi}^{2} f_{n}^{2} + \int_{M_{2} \smallsetminus U_{\delta}} f_{n}^{2} \rightarrow 1,
\]
and
\[
\int_{M_{2}}\| \grad g_{n} \|^{2} \leq 2 \int_{U_{\delta}}( \tilde{\chi}^{2} \| \grad f_{n} \|^{2} + f_{n}^{2} \| \grad \tilde{\chi}\| ^{2}) + \int_{M_{2} \smallsetminus U_{\delta}} \| \grad f_{n} \|^{2} \rightarrow 0.
\]
In particular, we have that $\mathcal{R}(g_{n}) \rightarrow 0$. Since $g_{n}$ is supported in the interior of $M_{2}$, for any $n \in \mathbb{N}$,
from Proposition \ref{Bottom of Neumann} and Remark \ref{Dirichlet}, it follows that $\lambda_{0}^{D}(M_{2}) = 0$. This is a contradiction, since from Proposition \ref{Inequality of Bottoms} and Remark \ref{Dirichlet}, we have $\lambda_{0}^{D}(M_{2}) \geq \lambda_{0}^{D}(M_{1}) > 0$.

Hence, (\ref{contradiction}) cannot hold, that is, for any $\varepsilon >0$, there exists $n \in \mathbb{N}$ and $t \in [0,\delta)$, such that
\begin{equation}\label{estimate}
\int_{C_{t}} \| \grad f_{n} \|^{2} \leq \varepsilon \int_{C_{t}} f_{n}^{2}.
\end{equation}
Let $0 < \varepsilon < \lambda_{0}^{D}(M_{2})$ and consider $f_{n} \in C^{\infty}_{c}(M_{2})$, with $\| f_{n} \|_{L^{2}(M_{2})} = 1$, $\mathcal{R}(f_{n}) < \varepsilon$, satisfying (\ref{estimate}) for some $t  \in [0,\delta)$. Let $t_{0}$ be the minimum of all $t \in [0,\delta)$, for which (\ref{estimate}) holds. If $t_{0} = 0$, then $f_{n}$ is the desired function.
Otherwise, define $f \in C_{c}(M_{2})$ by $f = f_{n}$ outside $U_{t_{0}}$, and $f(\Psi (x,t)) = f_{n}(\Psi(x,t_{0}))$ for $t \leq t_{0}$. It is clear that $f \in \Lip_{c}(M)$ and is smooth on $M_{2} \smallsetminus C_{t_{0}}$.

Since $\mathcal{R}(f_{n}) < \lambda_{0}^{D}(M_{2})$, from Proposition \ref{Bottom of Neumann} and Remark \ref{Dirichlet}, it follows that $f_{n}$ is not identically zero on $U_{t_{0}}$. Since $\mathcal{R}(f_{n}) < \varepsilon$, from the definition of $t_{0}$, it follows that $f_{n}$ is not identically zero on $M_{2} \smallsetminus U_{t_{0}}$. In particular, this yields that $f$ is non-zero.
Since (\ref{estimate}) holds for $t=t_{0}$, we have
\[
\int_{\partial M_{2}} \| \grad f \|^{2} = \int_{C_{t_{0}}} \| \grad (f_{n}|_{C_{t_{0}}}) \|^{2} \leq \varepsilon \int_{C_{t_{0}}} f_{n}^{2} = \varepsilon \int_{\partial M_{2}} f ^{2}.
\]
Furthermore, we have
\begin{eqnarray}\label{Rayleigh quotient estimate}
\mathcal{R}(f) &=& \frac{\int_{0}^{t_{0}} \int_{C_{t}} \| \grad f \|^{2}  + \int_{M_{2} \smallsetminus U_{t_{0}}} \| \grad f_{n} \|^{2}}{\int_{0}^{t_{0}} \int_{C_{t}} f ^{2}  + \int_{M_{2} \smallsetminus U_{t_{0}}} f_{n}^{2}} \nonumber \\
&\leq& \frac{ \varepsilon \int_{0}^{t_{0}} \int_{C_{t}}  f ^{2}  + \int_{M_{2} \smallsetminus U_{t_{0}}} \| \grad f_{n} \|^{2}}{\int_{0}^{t_{0}} \int_{C_{t}} f ^{2}  + \int_{M_{2} \smallsetminus U_{t_{0}}} f_{n}^{2}} \nonumber \\
&\leq& \max \left\{ \varepsilon , \frac{\int_{M_{2} \smallsetminus U_{t_{0}}} \| \grad f_{n} \|^{2}}{\int_{M_{2} \smallsetminus U_{t_{0}}} f_{n}^{2}}  \right\}.
\end{eqnarray}
It is clear that
\begin{eqnarray}
\varepsilon &>& \mathcal{R}(f_{n}) = \frac{\int_{0}^{t_{0}} \int_{C_{t}} \| \grad f_{n} \|^{2}  + \int_{M_{2} \smallsetminus U_{t_{0}}} \| \grad f_{n} \|^{2}}{\int_{0}^{t_{0}} \int_{C_{t}} f_{n} ^{2}  + \int_{M_{2} \smallsetminus U_{t_{0}}} f_{n}^{2}} \nonumber \\
&\geq& \min \left\{ \frac{\int_{0}^{t_{0}} \int_{C_{t}} \| \grad f_{n} \|^{2}}{\int_{0}^{t_{0}} \int_{C_{t}} f_{n} ^{2}} ,\frac{\int_{M_{2} \smallsetminus U_{t_{0}}} \| \grad f_{n} \|^{2}}{\int_{M_{2} \smallsetminus U_{t_{0}}} f_{n}^{2}} \right\}. \nonumber
\end{eqnarray}
From the definition of $t_{0}$, the first term is greater than $\varepsilon$, which yields that the second term is smaller than $\varepsilon$. From (\ref{Rayleigh quotient estimate}), it follows that $\mathcal{R}(f) \leq \varepsilon$. Since $\varepsilon < \lambda_{0}^{D}(M_{2})$, from Remark \ref{Dirichlet} and Proposition \ref{Bottom of Neumann}, it is clear that $f$ cannot vanish identically on $\partial M_{2}$.\qed
\end{proof}\medskip

Glue the cylinder $\partial M_{1} \times [0,+\infty)$, with the product metric, along $\partial M_{1}$, so that $\partial / \partial t$ is the outward pointing normal to $\partial M_{1}$. Denote by $N_{1}$ the obtained Riemannian manifold.
The covering $p \colon M_{2} \to M_{1}$ can be extended to a Riemannian covering $p \colon N_{2} \to N_{1}$, where $N_{2}$ is the Riemannian manifold obtained by gluing $\partial M_{2} \times [0,+\infty)$ along $\partial M_{2}$ in the analogous way. Evidently, $p \colon M_{2} \to M_{1}$ is amenable if and only if $p \colon N_{2} \to N_{1}$ is amenable. Points in $N_{i} \smallsetminus M_{i}^{\circ}$ will be written in the form $(x,t)$, with $x \in \partial M_{i}$ and $t \geq 0$, $i=1,2$.

Consider a positive smooth $\phi \colon [0,+\infty) \to \mathbb{R}$, with $\phi(t) = 1$ for $t \leq 1/2$, and $\phi(t) = e^{-t}$ for $t \geq 1$. Let $\varphi \in C^{\infty}(N_{1})$ be the square-integrable function defined by $\varphi = 1$ in $M_{1}$, and $\varphi(x,t) = \phi(t)$ in $N_{1} \smallsetminus M_{1}$. Consider the function $V \in C^{\infty}(N_{1})$, defined by $V=0$ in $M_{1}$, and $V(x,t) = \phi^{\prime \prime}(t)/\phi(t)$ in $N_{1} \smallsetminus M_{1}$. It is worth to point out that outside the compact set $M_{1} \cup (\partial M_{1} \times [0,1])$, we have that $V=1$ and in particular, $V$ is bounded from below.
Consider the Schr\"{o}dinger operator $S_{1} = \Delta + V$ on $N_{1}$ and its lift $S_{2}$ on $N_{2}$. It is clear that $S_{1} \varphi = 0$.

\begin{remark}\label{construction remark}
Evidently, $N_{1}$ is complete, without boundary and with Ricci curvature bounded from below. Since $V=1$ outside the compact set $M_{1} \cup (\partial M_{1} \times [0,1])$, from Propositions \ref{Bottom of essential spectrum} and \ref{Bottom of Neumann}, it follows that $\lambda_{0}^{\ess}(S_{1}) \geq 1$. Moreover, it is clear that $V$ and $ \grad V $ are bounded.
\end{remark}

\begin{lemma}\label{bottom zero}
The function $\varphi$ belongs to the domain of the Friedrichs extension of $S_{1}$ and in particular, $\lambda_{0}(S_{1}) = 0$.
\end{lemma}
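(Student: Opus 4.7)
The plan is to verify directly that $\varphi$ lies in $\mathcal{D}(S_1^{N})$ with $S_1^{N}\varphi = 0$, and then combine this with Proposition \ref{Maximum of positive spectrum} to obtain $\lambda_0(S_1) = 0$. Writing $c := \inf_{N_1} V$, recall that by construction $\varphi \equiv 1$ on $M_1$ and $\varphi(x,t) = \phi(t)$ on the cylinder, with $\phi(t) = e^{-t}$ for $t \geq 1$; similarly $V = 1$ on $\partial M_1 \times [1,\infty)$ and $S_1 \varphi = 0$ pointwise on $N_1$.

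First I would check that $\varphi \in H_V(N_1)$. Using the product structure of the metric on the cylindrical end and the exponential decay $|\phi(t)|, |\phi'(t)| = O(e^{-t})$, the integrals $\int_{N_1}\varphi^2$, $\int_{N_1}\|\grad \varphi\|^2$ and $\int_{N_1} V\varphi^2$ are all finite, so $\varphi$ has finite $H_V$-norm. Picking a sequence of smooth compactly supported cutoffs $\chi_n$ that equal $1$ on $M_1 \cup (\partial M_1 \times [0,n])$, vanish for $t \geq 2n$, and satisfy $\|\grad \chi_n\|_\infty \leq C/n$, dominated convergence combined with the exponential decay of $\varphi$ on the cylinder gives $\chi_n \varphi \to \varphi$ in $H_V(N_1)$. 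Hence $\varphi \in H_V(N_1)$.

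Next, for any $w \in C^\infty_c(N_1)$, the integration-by-parts identity $\int_{N_1} \langle \grad \varphi, \grad w \rangle = \int_{N_1} w \,\Delta \varphi$ holds without boundary terms (since $w$ is compactly supported and $\varphi$ is globally smooth), and $\Delta \varphi = -V\varphi$ by the equation $S_1\varphi = 0$. Consequently,
\[
\langle \varphi, w \rangle_{H_V(N_1)} = \int_{N_1} \bigl( -V\varphi w + (V - c + 1)\varphi w \bigr) = (1-c)\langle \varphi, w \rangle_{L^2(N_1)}.
\]
Since $C^\infty_c(N_1)$ is dense in $H_V(N_1)$, this identity extends to all $w \in H_V(N_1)$. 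Taking $v' := (1-c)\varphi \in L^2(N_1)$ in the description of the Friedrichs extension recalled in Subsection \ref{Schroedinger preliminaries}, I conclude that $\varphi \in \mathcal{D}(S_1^N)$ and $S_1^N \varphi = v' + (c-1)\varphi = 0$. In particular, $0$ is an eigenvalue of $S_1^N$, so $\lambda_0(S_1) \leq 0$.

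Finally, by Remark \ref{construction remark}, $N_1$ is complete and without boundary, and $\varphi \in C^\infty(N_1)$ is a positive function with $S_1 \varphi = 0$. Proposition \ref{Maximum of positive spectrum} then yields $\lambda_0(S_1) \geq 0$, and combining the two inequalities gives $\lambda_0(S_1) = 0$. The only point requiring any care is the passage from the classical identity $S_1\varphi = 0$ to the operator-theoretic identity $S_1^{N}\varphi = 0$, which is handled by the density of $C^\infty_c(N_1)$ in $H_V(N_1)$ together with the fact that all test functions involved in the integration by parts are compactly supported, so no decay estimates beyond $\varphi \in H_V(N_1)$ are needed.
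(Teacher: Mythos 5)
Your proposal is correct and follows the same route as the paper: establish $\varphi \in H_{V}(N_{1})$ by cutting off on the cylindrical end (the paper uses Lipschitz cutoffs with a unit-length transition, you use smooth ones, both valid), conclude from $S_{1}\varphi = 0$ that $\varphi$ is a $0$-eigenfunction of $S_{1}^{N}$, and finish with Proposition \ref{Maximum of positive spectrum}. The only difference is that you spell out in detail the step the paper disposes of with ``since $S_{1}\varphi = 0$, it is clear that $\varphi$ is an eigenfunction of the Friedrichs extension,'' explicitly verifying membership in $\mathcal{D}(S_{1}^{N})$ via the weak identity $\langle \varphi, w\rangle_{H_{V}(N_{1})} = (1-c)\langle \varphi, w\rangle_{L^{2}(N_{1})}$ and the density of $C^{\infty}_{c}(N_{1})$ in $H_{V}(N_{1})$; that computation is accurate and consistent with the paper's description of the Friedrichs extension.
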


\begin{proof}
For $T>0$, consider the compactly supported Lipschitz function $\chi_{T}$ defined by $\chi_{T} = 1$ in $M_{1}$, $\chi_{T}(x,t) = 1$ for $t \leq T$, $\chi_{T}(x,t) = T+1 - t$ for $T \leq t \leq T+1$, and $\chi_{T}(x,t) = 0$ for $t \geq T$. Then $\chi_{T} \varphi \in H_{0}^{1}(N_{1})$, for any $T>0$, and
\[
\| \varphi - \chi_{T} \varphi \|_{L^{2}(N_{1})}^{2} \leq \int_{\partial M_{1} \times [T,+\infty)} \varphi^{2}.
\]
Moreover, we have
\begin{eqnarray}
\int_{N_{1}} \| \grad (\varphi - \chi_{T} \varphi) \|^{2} &\leq& 2 \int_{ N_{1}} ((1-\chi_{T})^{2} \| \grad \varphi \|^{2} + \varphi^{2} \| \grad (1-\chi_{T}) \|^{2}) \nonumber \\
&\leq& 2 \int_{\partial M_{1} \times [T,+\infty)} \| \grad \varphi \|^{2} + 2 \int_{ \partial M_{1} \times [T,T+1] } \varphi^{2}. \nonumber
\end{eqnarray}
Evidently, $\varphi(x,t) = \| \grad \varphi (x,t) \| = e^{-t}$ for $t \geq 1$, which yields that $\chi_{T} \varphi \rightarrow \varphi$ in $H_{0}^{1}(N_{1})$, as $T \rightarrow +\infty$. Since $V$ is bounded, it follows that $\varphi \in H_{V}(N_{1})$. Since $S_{1} \varphi = 0$, it is clear that $ \varphi$ is an eigenfunction of the Friedrichs extension of $S_{1}$, which yields that $\lambda_{0}(S_{1}) \leq 0$. From Proposition \ref{Maximum of positive spectrum}, since $\varphi$ is positive, it follows that $\lambda_{0}(S_{1}) = 0$. \qed
\end{proof}\medskip


Denote by $\tilde{\varphi}$ the lift of $\varphi$ on $N_{2}$ and consider the renormalization $S_{\tilde{\varphi}}$ of $S_{2}$ with respect to $\tilde{\varphi}$. Let $g \in \Lip_{c}(M_{2})$, such that $g$ restricted on $\partial M_{2}$ is non-zero and smooth, and $h \colon [0,+\infty) \to  \mathbb{R}$ be a compactly supported, smooth function, with $h(t) = 1$ for $t \leq 1/2$. Extend $g$ in the glued ends $\partial M_{2} \times [0,+\infty)$ by
\begin{equation}\label{extension}
g(x,t) := g(x) h(t).
\end{equation}
It is clear that $g  \in \Lip_{c}(N_{2})$, and in the glued ends, we have
\[
\grad g (x,t) = g(x) h^{\prime}(t) \frac{\partial}{\partial t} + h(t)  \grad g(x).
\]
In particular, it follows that
\[
\| \grad g(x,t) \|^{2} = g^{2}(x) h^{\prime}(t)^{2} + h^{2}(t) \| \grad g (x) \|^{2},
\]
which yields that
\begin{eqnarray}\label{Rayleigh quotient of extension}
\frac{\int_{N_{2} \smallsetminus M_{2}} \| \grad g \|^{2} \tilde{\varphi}^{2}}{\int_{N_{2} \smallsetminus M_{2}} g^{2} \tilde{\varphi}^{2} } &=&  \frac{\int_{\partial M_{2}} \int_{0}^{+\infty} \| \grad g \|^{2}\tilde{\varphi}^{2}}{\int_{\partial M_{2}}\int_{0}^{+\infty} g^{2}\tilde{\varphi}^{2} } \nonumber \\
&=&  \frac{\int_{\partial M_{2}} \| \grad g \|^{2}}{\int_{\partial M_{2}} g^{2}} + \frac{\int_{0}^{+\infty} (h^{\prime})^{2} \phi^{2}}{\int_{0}^{\infty} h^{2} \phi^{2}},
\end{eqnarray}
where we used that in the glued ends $\partial M_{2} \times [0,+\infty)$, we have $\tilde{\varphi}(x,t) = \phi(t)$.
\begin{proposition}\label{equality of bottoms}
The renormalized operator $S_{\tilde{\varphi}}$ satisfies $\lambda_{0}(S_{\tilde{\varphi}})=0$, which yields that $\lambda_{0}(S_{2}) = \lambda_{0}(S_{1})$.
\end{proposition}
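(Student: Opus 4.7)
The plan is to prove the two inequalities $\lambda_{0}(S_{\tilde{\varphi}}) \geq 0$ and $\lambda_{0}(S_{\tilde{\varphi}}) \leq 0$ separately, and then invoke Proposition \ref{renormalization without boundary} to translate the conclusion into $\lambda_{0}(S_{2}) = \lambda_{0}(S_{1}) = 0$, the last equality coming from Lemma \ref{bottom zero}.

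The lower bound is immediate: Proposition \ref{Inequality of Bottoms} applied to the lift $p \colon N_{2} \to N_{1}$ (both manifolds without boundary) gives $\lambda_{0}(S_{1}) \leq \lambda_{0}(S_{2})$, and since $\lambda_{0}(S_{1}) = 0$ by Lemma \ref{bottom zero}, we have $\lambda_{0}(S_{2}) \geq 0$. Applying Proposition \ref{renormalization without boundary} (with $\lambda = 0$) yields $\lambda_{0}(S_{\tilde{\varphi}}) = \lambda_{0}(S_{2}) \geq 0$.

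For the upper bound, the idea is to build compactly supported test functions on $N_{2}$ whose Rayleigh quotient with respect to $S_{\tilde{\varphi}}$ is arbitrarily small, and then conclude via Proposition \ref{renormalization without boundary}. Fix $\varepsilon > 0$, and use Lemma \ref{approximation} to produce $g \in \Lip_{c}(M_{2})$, smooth and non-zero on $\partial M_{2}$, with $\mathcal{R}(g) \leq \varepsilon$ and $\int_{\partial M_{2}} \|\grad g\|^{2} \leq \varepsilon \int_{\partial M_{2}} g^{2}$. Extend $g$ to $N_{2}$ via the formula (\ref{extension}), using a smooth compactly supported cutoff $h$ with $h = 1$ on $[0, 1/2]$, chosen so that the second summand on the right-hand side of (\ref{Rayleigh quotient of extension}) is also at most $\varepsilon$; this is possible because $\int_{0}^{\infty} h^{2} \phi^{2} \geq \int_{0}^{1/2} 1 = 1/2$ while for $h$ equal to $1$ on $[0,T]$ and decreasing smoothly to $0$ on $[T,T+1]$ one has $\int_{0}^{\infty} (h^{\prime})^{2} \phi^{2} = O(e^{-2T})$. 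Thus the resulting extended function, still denoted $g$, lies in $\Lip_{c}(N_{2})$ and satisfies, on the cylindrical part, the bound
\[
\frac{\int_{N_{2} \smallsetminus M_{2}} \|\grad g\|^{2} \tilde{\varphi}^{2}}{\int_{N_{2} \smallsetminus M_{2}} g^{2} \tilde{\varphi}^{2}} \leq 2\varepsilon,
\]
while on $M_{2}$, where $\tilde{\varphi} = 1$, one has $\int_{M_{2}} \|\grad g\|^{2} \tilde{\varphi}^{2} \leq \varepsilon \int_{M_{2}} g^{2} \tilde{\varphi}^{2}$.

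Combining the two estimates by the elementary inequality $(a_{1} + a_{2})/(b_{1} + b_{2}) \leq \max\{a_{1}/b_{1}, a_{2}/b_{2}\}$ applied to the numerators and denominators of the Rayleigh quotient split over $M_{2}$ and $N_{2} \smallsetminus M_{2}$, we obtain $\mathcal{R}_{S_{\tilde{\varphi}}}(g) \leq 2\varepsilon$. Proposition \ref{renormalization without boundary} now gives $\lambda_{0}(S_{\tilde{\varphi}}) \leq 2\varepsilon$, and letting $\varepsilon \to 0$ yields $\lambda_{0}(S_{\tilde{\varphi}}) \leq 0$. Together with the lower bound, $\lambda_{0}(S_{\tilde{\varphi}}) = 0$, whence $\lambda_{0}(S_{2}) = \lambda_{0}(S_{1})$. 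The main technical point in this plan is verifying that the denominator $\int_{M_{2}} g^{2} \tilde{\varphi}^{2}$ is controlled from below so that the combined Rayleigh quotient estimate is legitimate; this is ensured by the non-vanishing of $g$ on $\partial M_{2}$ guaranteed by Lemma \ref{approximation} together with the fact that $\tilde{\varphi} = 1$ on $M_{2}$.
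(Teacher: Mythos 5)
Your proof is correct and follows essentially the same route as the paper: use Lemma \ref{approximation} to get a good Lipschitz test function on $M_{2}$, extend it over the cylindrical ends via (\ref{extension}), control the cylinder contribution through (\ref{Rayleigh quotient of extension}) by choosing $T$ large, and split the Rayleigh quotient with the elementary $\max$ inequality. The one cosmetic difference is that you argue the lower bound $\lambda_{0}(S_{\tilde\varphi}) \geq 0$ explicitly via Proposition \ref{Inequality of Bottoms}, whereas the paper takes it for granted (it is automatic since $\mathcal{R}_{S_{\tilde\varphi}}(f) \geq 0$ for every $f$); also note that what the combined estimate actually requires of $\int_{M_{2}} g^{2}\tilde\varphi^{2}$ is merely positivity, not a quantitative lower bound.
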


\begin{proof}
Let $\varepsilon > 0$. From Lemma \ref{approximation}, there exists $g \in \Lip_{c}(M_{2})$, smooth on $M_{2} \smallsetminus C_{t_{0}}$, for one $t_{0} \in (0,\delta)$, not vanishing identically on the boundary, such that
\[
\frac{\int_{M_{2}} \| \grad g \|^{2}}{\int_{M_{2}} g^{2}} < \frac{\varepsilon}{2}  \text{ and } \frac{\int_{\partial M_{2}} \| \grad g \|^{2}}{\int_{\partial M_{2}} g^{2}} < \frac{\varepsilon}{2}.
\]
Let $T > 1$ and consider a compactly supported, smooth $h \colon [0,+\infty) \to \mathbb{R}$, with $h(t) = 1$ for $t \leq T$, $h(t) = 0$ for $t \geq T + 1$, and $| h^{\prime} | \leq 2$.
Extend $g \in \Lip_{c}(M_{2})$ to the compactly supported $g \in \Lip_{c}(N_{2})$ as in (\ref{extension}). Evidently, we have
\[
\frac{\int_{0}^{+\infty} (h^{\prime})^{2} \phi^{2}}{\int_{0}^{+\infty} h^{2}\phi^{2}} \leq 4 \frac{\int_{T}^{T+1} e^{-2t}dt }{\int_{1}^{T} e^{-2t}dt} = 4 \frac{1 - e^{2}}{e^{2} - e^{2T}} < \frac{\varepsilon}{2},
\]
for some sufficiently large $T$. From (\ref{Rayleigh quotient of extension}), it follows that
\[
\frac{\int_{N_{2} \smallsetminus M_{2}} \| \grad g \|^{2} \tilde{\varphi}^{2}}{\int_{N_{2} \smallsetminus M_{2}} g^{2}\tilde{\varphi} ^{2}} < \varepsilon.
\]
Hence, we have
\begin{eqnarray}
\mathcal{R}_{S_{\tilde{\varphi}}}(g) &=&  \frac{\int_{M_{2}} \| \grad g \|^{2} +  \int_{N_{2} \smallsetminus M_{2}} \| \grad g \|^{2} \tilde{\varphi}^{2}}{\int_{M_{2}} g^{2} + \int_{N_{2} \smallsetminus M_{2}} g^{2}\tilde{\varphi}^{2}} \nonumber\\
&\leq& \max \left\{ \frac{\int_{M_{2}} \| \grad g \|^{2}}{\int_{M_{2}} g^{2}}, \frac{\int_{N_{2} \smallsetminus M_{2}} \| \grad g \|^{2} \tilde{\varphi}^{2}}{\int_{N_{2} \smallsetminus M_{2}} g^{2}\tilde{\varphi}^{2}}  \right\} < \varepsilon. \nonumber
\end{eqnarray}
Since $\varepsilon > 0$ is arbitrary, from Proposition \ref{renormalization without boundary}, it follows that $\lambda_{0}(S_{\tilde{\varphi}}) = 0$ and in particular, $\lambda_{0}(S_{2}) = \lambda_{0}(S_{1})$. \qed
\end{proof}\medskip

\noindent\emph{Proof of Theorem \ref{Neumann compact}:} Consider a Riemannian metric on $M_{1}$ as in Lemma \ref{metric} and its lift on $M_{2}$. Glue cylinders along the boundaries and extend the covering $p \colon M_{2} \to M_{1}$ to a Riemannian covering $p \colon N_{2} \to N_{1}$ as above. From Remark \ref{construction remark}, $N_{1}$ is complete, without boundary, and with Ricci curvature bounded from below. Consider the Schr\"{o}dinger operator $S_{1} = \Delta + V$ on $N_{1}$, as above, and its lift $S_{2}$ on $N_{2}$. From Remark \ref{construction remark}, we have that $V$ and $ \grad V $ are bounded.
From Lemma \ref{bottom zero} and Proposition \ref{equality of bottoms}, we obtain that $\lambda_{0}(S_{2}) = \lambda_{0}(S_{1}) = 0$, and Remark \ref{construction remark} yields that $\lambda_{0}^{\ess}(S_{1}) \geq 1$. From Theorem \ref{weak converse}, it follows that the covering $p \colon N_{2} \to N_{1}$ is amenable, and so is the covering $p \colon M_{2} \to M_{1}$. \qed \medskip



\noindent\emph{Proof of Theorem \ref{Theorem Neumann}:} Follows from Theorems \ref{Neumann compact} and \ref{Old result for Neumann}. \qed

\section{Arbitrary Riemannian coverings}\label{Section Arbitrary}

In this section, we prove Theorem \ref{Main result intro} and present some immediate consequences of it. As stated in the Introduction, we establish the following more general version of this theorem, involving manifolds with possibly non-empty boundary.

\begin{theorem}\label{main theorem}
Let $p \colon M_{2} \to M_{1}$ be a Riemannian covering. Let $S_{1}$ be a Schr\"{o}dinger operator on $M_{1}$, with $\lambda_{0}^{N}(S_{1}) \notin \sigma_{\ess}^{N}(S_{1})$, and $S_{2}$ its lift on $M_{2}$. Then $\lambda_{0}^{N}(S_{2}) = \lambda_{0}^{N}(S_{1})$ if and only if the covering is amenable.
\end{theorem}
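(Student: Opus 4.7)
The \emph{if} direction is immediate from Theorem \ref{Old result for Neumann}. For the \emph{only if} direction, the plan is to adapt the cylinder construction of Section \ref{Section Compact} so as to reduce, not to Theorem \ref{Neumann compact} itself (which is about the Laplacian), but to Theorem \ref{weak converse} applied to a cylindrical extension of a suitable compact piece of the covering. The eigenfunction provided by the isolation of $\lambda_{0}^{N}(S_{1})$, combined with the localization principle of Proposition \ref{Decomposition}, plays the role that the constant function and compactness of $M_{1}$ played in Section \ref{Section Compact}.

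By Propositions \ref{Folner} and \ref{amenability domains}, it suffices, given any finite $G \subset \pi_{1}(M_{1})$, to exhibit a smoothly bounded, compact, connected neighborhood $K$ of $x$ with $G \subset i_{*}\pi_{1}(K)$ for which $p\colon p^{-1}(K) \to K$ is amenable. I use two ingredients, both free from the hypothesis $\lambda_{0}^{N}(S_{1}) \notin \sigma_{\ess}^{N}(S_{1})$. First, Proposition \ref{eigenfunction} gives a positive smooth ground state $\varphi$ of $S_{1}^{N}$ (with $\nu(\varphi)=0$ on $\partial M_{1}$); lifting to $\tilde{\varphi}$ on $M_{2}$ and renormalizing yields $\lambda_{0}(S_{1,\varphi}) = \lambda_{0}(S_{2,\tilde{\varphi}}) = 0$. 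Second, by Proposition \ref{Decomposition} fix a compact $K_{0} \subset M_{1}$ of positive measure and $c_{0}>0$ such that any $f \in \Lip_{c}(M_{1})$ with $\supp f \cap K_{0} = \emptyset$ satisfies $\mathcal{R}_{S_{1}}(f) \geq \lambda_{0}^{N}(S_{1}) + c_{0}$; the pushdown argument behind Proposition \ref{Inequality of Bottoms} lifts this estimate upstairs, so that any $F \in \Lip_{c}(M_{2})$ with $\supp F \cap p^{-1}(K_{0}) = \emptyset$ obeys $\mathcal{R}_{S_{2}}(F) \geq \lambda_{0}^{N}(S_{1}) + c_{0}$.

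Enlarging if necessary, I take $K$ to contain $K_{0}$, to contain representative loops for $G$, and to contain a very wide collar around $K_{0}$. Mirroring Lemma \ref{metric} I modify the metric on $K$ so the boundary exponential is isometric on a tube, glue cylinders $\partial K \times [0,\infty)$ with product metric to obtain a complete $N_{1}$ without boundary and with Ricci curvature bounded from below, and repeat the construction upstairs to form $p\colon N_{2} \to N_{1}$ (with $N_{2}$ possibly non-connected). I then build $\hat{\varphi} \in C^{\infty}(N_{1})$ positive, equal to $\varphi$ outside a narrow collar of $\partial K$, smoothly interpolated across that collar so that $\nu(\hat{\varphi})=0$ on $\partial K$, and of product form $\varphi(x)e^{-t}$ on the cylinder for $t \geq 1$. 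Setting $\hat{V} := \lambda_{0}^{N}(S_{1}) - \Delta\hat{\varphi}/\hat{\varphi}$, the Schr\"{o}dinger operator $\hat{S}_{1} := \Delta + \hat{V}$ satisfies $\hat{S}_{1}\hat{\varphi} = \lambda_{0}^{N}(S_{1})\hat{\varphi}$, has $\hat{V}$ and $|\grad \hat{V}|$ bounded, and has $\hat{V}$ equal to $\lambda_{0}^{N}(S_{1}) + 1$ outside a compact set; Propositions \ref{Maximum of positive spectrum} and \ref{Bottom of essential spectrum} then give $\lambda_{0}(\hat{S}_{1}) = \lambda_{0}^{N}(S_{1})$ with $\lambda_{0}^{\ess}(\hat{S}_{1}) \geq \lambda_{0}^{N}(S_{1}) + 1$.

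It remains to check $\lambda_{0}(\hat{S}_{2}) = \lambda_{0}(\hat{S}_{1})$ for the lift $\hat{S}_{2}$, after which Theorem \ref{weak converse} applied to $p\colon N_{2} \to N_{1}$ gives amenability, which descends to $p\colon p^{-1}(K) \to K$ and hence, by choice of $K$, to $p\colon M_{2} \to M_{1}$. Starting from a minimizing sequence $(F_{n}) \subset \Lip_{c}(M_{2})$ for $S_{2}$, I set $G_{n} := F_{n}/\tilde{\varphi}$, so that $\mathcal{R}_{S_{2,\tilde{\varphi}}}(G_{n}) \to 0$ in the weighted Hilbert space. An IMS partition $\chi_{1}^{2}+\chi_{2}^{2}=1$, pulled back from $M_{1}$ so that $\chi_{1}=1$ on a neighborhood of $K_{0}$ and $\supp\chi_{1} \subset K^{\circ}$, combined with the renormalized version of the decomposition estimate (applied to $\chi_{2}G_{n}$), forces the weighted mass of $G_{n}$ outside $p^{-1}(K_{0})$ to tend to $0$ once the collar width $r$ of $K_{0}$ inside $K$ is taken large enough to render $|\grad\chi|^{2} \leq C/r^{2}$ negligible against $c_{0}$. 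The localizations $\chi_{1}G_{n}$ lie in $\Lip_{c}(p^{-1}(K))$; extending them to $N_{2}$ by multiplication with a cylinder cutoff exactly as in Proposition \ref{equality of bottoms} produces a sequence of test functions for $\hat{S}_{2,\hat{\varphi}}$ whose renormalized Rayleigh quotients tend to $0$. The main obstacle I expect is the joint design of the collar modification of $\hat{\varphi}$ and the IMS cutoff: the collar must be wide enough for the cutoff error to die, while $\hat{V}$ must remain uniformly bounded across the interpolation region so that Theorem \ref{weak converse} still applies. Once these two estimates are balanced, the plan closes.
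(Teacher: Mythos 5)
Your ``if'' direction and your reduction via Propositions~\ref{Folner} and~\ref{amenability domains} are fine, and your idea of transplanting the cylinder construction from Section~\ref{Section Compact} directly onto a compact piece $K$ of $M_{1}$ (instead of first establishing Theorem~\ref{Neumann compact} and then feeding it into Lemma~\ref{key}) is genuinely different from what the paper does. However, the crucial step in your plan does not close.

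The obstruction is quantitative. You take a minimizing sequence $(F_{n})$ for $S_{2}$, pass to $G_{n}=F_{n}/\tilde{\varphi}$ with $\mathcal{R}_{S_{2,\tilde{\varphi}}}(G_{n})\to 0$, and apply IMS with a cutoff $\chi_{1}$ supported in $K$ and equal to $1$ near $K_{0}$. The IMS identity together with the lifted version of Proposition~\ref{Decomposition} yields
\[
c_{0}\,\|\chi_{2}G_{n}\|^{2}_{L^{2}_{\tilde{\varphi}}}
\;\leq\;
\bigl(\mathcal{R}_{S_{2,\tilde{\varphi}}}(G_{n}) + C/r^{2}\bigr)\|G_{n}\|^{2}_{L^{2}_{\tilde{\varphi}}}
\quad\text{and}\quad
\|\grad(\chi_{1}G_{n})\|^{2}_{L^{2}_{\tilde{\varphi}}}
\;\leq\;
\bigl(\mathcal{R}_{S_{2,\tilde{\varphi}}}(G_{n}) + C/r^{2}\bigr)\|G_{n}\|^{2}_{L^{2}_{\tilde{\varphi}}},
\]
so as $n\to\infty$ the renormalized Rayleigh quotient of $\chi_{1}G_{n}$ tends to approximately $C/r^{2}$, not to $0$. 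Since $K$ (hence $r$) must be fixed before one glues cylinders and forms $N_{1},N_{2}$ and $\hat{S}_{1},\hat{S}_{2}$, you only obtain $\lambda_{0}(\hat{S}_{2})\leq\lambda_{0}(\hat{S}_{1})+C/r^{2}$ rather than the equality $\lambda_{0}(\hat{S}_{2})=\lambda_{0}(\hat{S}_{1})$ that Theorem~\ref{weak converse} requires. Letting $r\to\infty$ changes the covering to which you want to apply Theorem~\ref{weak converse}, so this does not help; the hypothesis of that theorem is exact, not approximate, and nothing in your proposal bridges the gap.

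The paper evades precisely this difficulty by not trying to localize the minimizing sequence \emph{inside} $p^{-1}(K)$. Instead it argues by contradiction: assuming non-amenability, Proposition~\ref{amenability domains} produces a compact $K'$ with $p\colon p^{-1}(K')\to K'$ non-amenable, Theorem~\ref{Neumann compact} gives $\lambda_{0}^{N}(p^{-1}(K'))>0$, and that positive lower bound on the Laplacian's Rayleigh quotient on $p^{-1}(K')$ (applied to restrictions $f_{n}|_{p^{-1}(K')}$, valid test functions by Proposition~\ref{Bottom of Neumann}) forces the weighted mass of the minimizing sequence to escape $p^{-1}(K')$ with \emph{no} error term. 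After a harmless cutoff, the pushdowns are supported away from a compact set and have Rayleigh quotient tending to $\lambda_{0}^{N}(S_{1})$, contradicting Proposition~\ref{Decomposition}. In other words, Theorem~\ref{Neumann compact} supplies the uniform spectral gap that makes the localization exact; your construction tries to do without that gap and hence cannot drive the error to zero. You cannot avoid first proving a statement of the flavour of Theorem~\ref{Neumann compact}.

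A smaller but real point: you write that the weighted mass of $G_{n}$ outside $p^{-1}(K_{0})$ ``tends to $0$'' once $r$ is large. It does not; it tends to at most $(C/r^{2})/c_{0}$, which is small but positive. The ``main obstacle'' you flag at the end (making the cutoff error ``die'') is exactly this, and it is not an estimate to balance but the place the argument breaks.
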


The following lemma, which is a consequence of Theorem \ref{Neumann compact}, is essential for the proof of this theorem.

\begin{lemma}\label{key}
Let $p \colon M_{2} \to M_{1}$ be a non-amenable Riemannian covering. Let $S_{1}$ be a Schr\"{o}dinger operator on $M_{1}$, with $\lambda_{0}^{N}(S_{1})$ being an eigenvalue of $S_{1}^{N}$, and $S_{2}$ its lift on $M_{2}$.
If $\lambda_{0}^{N}(S_{2}) = \lambda_{0}^{N}(S_{1})$, then there exists a compact $K \subset M_{1}$ with non-empty interior, and $(f_{n})_{n \in \mathbb{N}} \subset C^{\infty}_{c}(M_{2})$, with $\| f_{n} \|_{L^{2}(M_{2})} = 1$, $\supp f_{n} \cap p^{-1}(K) = \emptyset$, for any $n \in \mathbb{N}$, and $\mathcal{R}_{S_{2}}(f_{n}) \rightarrow \lambda_{0}^{N}(S_{2})$.
\end{lemma}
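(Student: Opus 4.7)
\textbf{Proof plan for Lemma \ref{key}.}

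The plan is to reduce the problem via the renormalization trick to a statement about a weighted Dirichlet form, and then exploit the fact that non-amenability forces a positive bottom of the Laplace spectrum on preimages of compact sets.

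First, let $\varphi$ be a positive $\lambda_0^N(S_1)$-eigenfunction of $S_1^N$ (existing by hypothesis, smooth and nowhere vanishing, with $\nu(\varphi)=0$ on $\partial M_1$, by Proposition~\ref{eigenfunction}). Set $\tilde\varphi := \varphi\circ p$ and form the renormalization $(S_2)_{\tilde\varphi}$ on $L^2_{\tilde\varphi}(M_2)$. By Propositions~\ref{renormalization without boundary} and~\ref{renormalization Neumann}, our assumption $\lambda_0^N(S_2)=\lambda_0^N(S_1)$ rewrites as $\lambda_0((S_2)_{\tilde\varphi})=0$. Since the map $\mu_{\tilde\varphi}\colon v\mapsto \tilde\varphi v$ is an isometric isomorphism preserving compact supports, producing the $(f_n)$ of the lemma is equivalent to producing $v_n\in C^\infty_c(M_2)$ with $\nu(v_n)=0$ on $\partial M_2$, $\supp v_n\cap p^{-1}(K)=\emptyset$, and $\mathcal{R}_{(S_2)_{\tilde\varphi}}(v_n)\to 0$.

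Next, since $p$ is non-amenable, Proposition~\ref{amenability domains} yields a smoothly bounded, compact, connected neighborhood $K_{0}\subset M_1$ of some point for which the restricted covering $p\colon p^{-1}(K_0)\to K_0$ (with $p^{-1}(K_0)$ possibly non-connected) is non-amenable. Theorem~\ref{Neumann compact} then gives $\lambda_0^N(p^{-1}(K_0))>0$ for the Neumann Laplacian. Since $\varphi$ has positive upper and lower bounds on the compact $K_0$, comparing the renormalized Rayleigh quotient $\int\|\grad u\|^2\tilde\varphi^{\,2}/\int u^2\tilde\varphi^{\,2}$ with the unweighted one, one obtains a constant $\beta>0$ such that every Lipschitz $u$ supported in $p^{-1}(K_0)$ with $\nu(u)=0$ on $\partial M_2\cap p^{-1}(K_0)$ satisfies
\[
\int_{M_2}\|\grad u\|^2\tilde\varphi^{\,2}\;\geq\;\beta\int_{M_2}u^2\tilde\varphi^{\,2}.
\]

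Now choose $K\subset K_0^\circ$ compact with non-empty interior, and a smooth partition of unity $\chi^2+\rho^2=1$ on $M_1$ with $\chi=0$ in a neighborhood of $K$, $\rho$ supported in $K_0$, and $\nu(\chi)=\nu(\rho)=0$ on $\partial M_1$; set $\tilde\chi=\chi\circ p$, $\tilde\rho=\rho\circ p$. Let $u_n\in C^\infty_c(M_2)$ with $\nu(u_n)=0$ on $\partial M_2$, $\|u_n\|_{L^2_{\tilde\varphi}}=1$, and $\int\|\grad u_n\|^2\tilde\varphi^{\,2}\to 0$, furnished by Proposition~\ref{renormalization Neumann} (or \ref{renormalization without boundary}). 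Set $v_n:=\tilde\chi u_n$; then $\supp v_n\cap p^{-1}(K)=\emptyset$.

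The core step is an IMS-type localization in the weighted space $L^2_{\tilde\varphi}$: using $\tilde\chi^2+\tilde\rho^2=1$ one obtains the identity
\[
\int\|\grad u_n\|^2\tilde\varphi^{\,2} \;=\; \int\|\grad(\tilde\chi u_n)\|^2\tilde\varphi^{\,2} + \int\|\grad(\tilde\rho u_n)\|^2\tilde\varphi^{\,2} - \int u_n^{\,2}\bigl(\|\grad\tilde\chi\|^2+\|\grad\tilde\rho\|^2\bigr)\tilde\varphi^{\,2}.
\]
Since $\tilde\rho u_n$ is supported in $p^{-1}(K_0)$, the spectral bound gives $\int\|\grad(\tilde\rho u_n)\|^2\tilde\varphi^{\,2}\geq\beta\int(\tilde\rho u_n)^2\tilde\varphi^{\,2}$. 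Combining this with $\int\|\grad u_n\|^2\tilde\varphi^{\,2}\to 0$ and the pushdown analysis of minimizing sequences (which controls $\int_{p^{-1}(U)}u_n^{\,2}\tilde\varphi^{\,2}$ for compact $U\subset M_1$), one extracts, along a subsequence and after a suitable $n$-dependent refinement of the cutoff, a sequence $(v_n)$ with $\int(v_n)^2\tilde\varphi^{\,2}$ bounded below and $\int\|\grad v_n\|^2\tilde\varphi^{\,2}\to 0$. After normalization and passage through $\mu_{\tilde\varphi}$, this yields the desired $(f_n)$.

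\emph{Main obstacle.} The delicate point is the final IMS step: the localization error $\int u_n^2(\|\grad\tilde\chi\|^2+\|\grad\tilde\rho\|^2)\tilde\varphi^{\,2}$ lives in the transition region inside $p^{-1}(K_0)$, where pushdown shows the mass does \emph{not} vanish. The coercivity estimate coming from $\beta$ must be used to absorb this error against $\int\|\grad(\tilde\rho u_n)\|^2\tilde\varphi^{\,2}$, and this typically requires either tuning the cutoff width along the sequence or iterating the construction on a nested family of compacts $K_0$ provided by Proposition~\ref{amenability domains}; making this balance quantitative is the heart of the argument.
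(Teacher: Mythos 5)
Your plan identifies the right ingredients (renormalization by the lift of the positive ground state, a smoothly bounded compact domain $K_0$ with $\lambda_0^N(p^{-1}(K_0))>0$ from Proposition~\ref{amenability domains} and Theorem~\ref{Neumann compact}, and a cutoff), but the ``main obstacle'' you flag is not actually an obstacle, and as written your argument has a genuine gap where you try to circumvent it.

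The gap is in how you formulate the coercivity on $p^{-1}(K_0)$. You state it for Lipschitz $u$ \emph{supported} in $p^{-1}(K_0)$ (and you even impose $\nu(u)=0$ on $\partial M_2\cap p^{-1}(K_0)$, which is unnecessary). But by Proposition~\ref{Bottom of Neumann}, $\lambda_0^N(p^{-1}(K_0))$ is the infimum of the Rayleigh quotient over \emph{all} compactly supported Lipschitz functions on $p^{-1}(K_0)$, with no condition on $\partial(p^{-1}(K_0))=p^{-1}(\partial K_0)$. In particular the estimate applies to the \emph{restriction} $u_n|_{p^{-1}(K_0)}$ of your minimizing sequence (it is Lipschitz with compact support in $p^{-1}(K_0)$). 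Combining $c_1\le\varphi\le c_2$ on $K_0$ with $\int_{p^{-1}(K_0)}\|\grad u_n\|^2\tilde\varphi^{\,2}\le\int_{M_2}\|\grad u_n\|^2\tilde\varphi^{\,2}\to0$, you immediately get $\int_{p^{-1}(K_0)}u_n^{\,2}\tilde\varphi^{\,2}\to0$. This is exactly what the paper does, and it trivializes the cutoff step: for any $\chi\in C^\infty_c(M_1)$ with $\chi=1$ near $K$ and $\supp\chi\subset K_0^\circ$, the function $g_n:=(1-\tilde\chi)u_n$ satisfies $\|g_n\|_{L^2_{\tilde\varphi}}\to1$ and $\int\|\grad g_n\|^2\tilde\varphi^{\,2}\to0$, since both correction terms ($u_n^2\|\grad\tilde\chi\|^2$ and the mass removed) are supported in $p^{-1}(K_0)$ where the $\tilde\varphi^2$-mass of $u_n$ vanishes. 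No IMS identity, no tuning of cutoff widths, and no iteration over nested compacts is needed; your claim that ``pushdown shows the mass does not vanish'' in the transition region is the opposite of what actually happens. After normalizing $g_n$ and applying $\mu_{\tilde\varphi}$, one lands back in $L^2(M_2)$ with the required $(f_n)$.

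So your overall architecture matches the paper's, but you would get stuck exactly at the point you call ``the heart of the argument.'' The fix is to apply the Neumann spectral bound to restrictions rather than only to functions supported in $p^{-1}(K_0)$; once that is done, the rest of your plan goes through and in fact collapses to the paper's short proof.
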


\begin{proof}
If $M_{1}$ has non-empty boundary, then we denote by $\nu_{i}$ the inward pointing normal to $\partial M_{i}$, $i=1,2$.
From Proposition \ref{amenability domains}, since $p \colon M_{2} \to M_{1}$ is non-amenable, there exists a smoothly bounded, compact domain $K^{\prime}$, with non-empty interior, such that the covering $p \colon p^{-1}(K^{\prime}) \to K^{\prime}$ is non-amenable, where $p^{-1}(K^{\prime})$ may be non-connected. From Theorem \ref{Neumann compact}, it follows that $\lambda_{0}^{N}(p^{-1}(K^{\prime})) > 0$. 

Since $\lambda_{0}^{N}(S_{1})$ is an eigenvalue of $S_{1}^{N}$, from Proposition \ref{eigenfunction}, there exists a positive function $\varphi \in C^{\infty}(M_{1})$, with $S_{1} \varphi = \lambda_{0}^{N}(S_{1}) \varphi$ and $\nu_{1}(\varphi) = 0$ on $\partial M_{1}$ (if non-empty).
Consider the lift $\tilde{\varphi}$ of $\varphi$ on $M_{2}$ and the renormalization $S_{\tilde{\varphi}}$ of $S_{2}$ with respect to $\tilde{\varphi}$. Since $\lambda_{0}^{N}(S_{2}) = \lambda_{0}^{N}(S_{1})$, from Propositions \ref{renormalization without boundary} and \ref{renormalization Neumann}, it follows that
\[
0 = \lambda_{0}(S_{\tilde{\varphi}}) = \inf_{f} \frac{\int_{M_{2}} \| \grad f \|^{2} \tilde{\varphi}^{2}}{\int_{M_{2}} f^{2} \tilde{\varphi}^{2}},
\]
where the infimum is taken over all non-zero $f \in C^{\infty}_{c}(M_{2})$, with $\nu_{2}(f) = 0$ on $\partial M_{2}$ (if non-empty). In particular, there exists $(f_{n})_{n \in \mathbb{N}} \subset C^{\infty}_{c}(M_{2})$, with $\| f_{n} \|_{L^{2}_{\tilde{\varphi}}(M_{2})} = 1$, $\mathcal{R}_{S_{\tilde{\varphi}}}(f_{n}) \rightarrow 0$ and $\nu_{2}(f_{n}) = 0$ on $\partial M_{2}$ (if non-empty).

Since $\varphi$ is smooth and positive and $K^{\prime}$ is compact, there exist $c_{1},c_{2}>0$, such that $c_{1} \leq \varphi \leq c_{2}$ in $K^{\prime}$. From Proposition \ref{Bottom of Neumann}, it follows that
\[
\frac{\int_{p^{-1}(K^{\prime})} \| \grad f \|^{2} \tilde{\varphi}^{2}}{\int_{p^{-1}(K^{\prime})} f^{2} \tilde{\varphi}^{2}} \geq \frac{c_{1}^{2}}{c_{2}^{2}} \lambda_{0}^{N}(p^{-1}(K^{\prime})) > 0,
\]
for any $f \in C_{c}^{\infty}(p^{-1}(K^{\prime})) \smallsetminus \{0\}$. Since $\| f_{n} \|_{L^{2}_{\tilde{\varphi}}(M_{2})}=1$ and $\mathcal{R}_{S_{\tilde{\varphi}}}(f_{n}) \rightarrow 0$, it follows that
\[
\int_{p^{-1}(K^{\prime})} f_{n}^{2}  \tilde{\varphi}^{2} \rightarrow 0 \text{ and } \int_{M_{2} \smallsetminus p^{-1}(K^{\prime})} f_{n}^{2} \tilde{\varphi}^{2} \rightarrow 1.
\]
Let $K \subset M_{2}^{\circ}$ be a compact set, with non-empty interior, contained in the interior of $K^{\prime}$. Let $\chi \in C^{\infty}_{c}(M_{1})$, with $\chi = 1$ in a neighborhood of $K$, and $\supp \chi \subset K^{\prime} \cap M_{2}^{\circ}$. Consider the lift $\tilde{\chi}$ of $\chi$ on $M_{2}$, and let $g_{n} := (1 - \tilde{\chi}) f_{n} \in C^{\infty}_{c}(M_{2})$. It is clear that if $M_{1}$ has non-empty boundary, then $\nu_{2}(g_{n}) = 0$ on $\partial M_{2}$. Moreover, we have
$$\| g_{n} \|_{L^{2}_{\tilde{\varphi}}(M_{2})}^{2} = \int_{p^{-1}(K^{\prime})} (1 - \tilde{\chi})^{2} f_{n}^{2} \tilde{\varphi}^{2} + \int_{M_{2} \smallsetminus p^{-1}(K^{\prime})} f_{n}^{2} \tilde{\varphi}^{2} \rightarrow 1$$
and
\begin{eqnarray}
\int_{M_{2}} \| \grad g_{n} \|^{2} \tilde{\varphi}^{2} &\leq& 2 \int_{p^{-1}(K^{\prime})} (f_{n}^{2} \| \grad \tilde{\chi} \|^{2} +  (1-\tilde{\chi})^{2} \| \grad f_{n} \|^{2} )\tilde{\varphi}^{2} \nonumber \\
&+& \int_{M_{2} \smallsetminus p^{-1}(K^{\prime})} \|\grad f_{n}\|^{2} \tilde{\varphi}^{2} \rightarrow 0. \nonumber
\end{eqnarray} 
Therefore, $\mathcal{R}_{S_{\tilde{\varphi}}}(g_{n}) \rightarrow 0$ and $\supp g_{n} \cap p^{-1}(K) = \emptyset$. We may normalize $g_{n}$ in $L^{2}_{\tilde{\varphi}}(M)$, so that $\| g_{n} \|_{L^{2}_{\tilde{\varphi}} (M_{2})}=1$, for any $n \in \mathbb{N}$.

Consider $h_{n} := \tilde{\varphi} g_{n} \in C^{\infty}_{c}(M_{2})$. If $M_{2}$ has non-empty boundary, since $\nu_{2}(\tilde{\varphi}) = 0$ and $\nu_{2}(g_{n}) =0$, it follows that $\nu_{2}(h_{n}) = 0$ on $\partial M_{2}$. Evidently, $\| h_{n} \|_{L^{2}(M_{2})} = \| g_{n} \|_{L^{2}_{\tilde{\varphi}}(M_{2})}= 1 $. Moreover, from the definition of the renormalized Schr\"{o}dinger operator, it is clear that
\begin{eqnarray}
\mathcal{R}_{S_{2}}(h_{n}) &=& \langle S_{2}h_{n},h_{n} \rangle_{L^{2}(M_{2})} = \langle S_{\tilde{\varphi}}g_{n},g_{n} \rangle_{L_{\tilde{\varphi}}^{2}(M_{2})} + \lambda_{0}^{N}(S_{2}) \nonumber \\
&=& \mathcal{R}_{S_{\tilde{\varphi}}}(g_{n}) + \lambda_{0}^{N}(S_{2}) \rightarrow \lambda_{0}^{N}(S_{2}), \nonumber
\end{eqnarray}
which completes the proof. \qed
\end{proof}\medskip

\noindent\emph{Proof of Theorem \ref{main theorem}:}
From Theorem \ref{Old result for Neumann}, if the covering is infinite sheeted and amenable, then $\lambda_{0}^{N}(S_{1}) = \lambda_{0}^{N}(S_{2})$.
If the covering is finite sheeted, then for $f \in C^{\infty}_{c}(M_{1})$, we have that $f \circ p \in C^{\infty}_{c}(M_{2})$, and the equality of the bottoms follows from Proposition \ref{Bottom of Neumann} and Corollary \ref{Inequality of Bottoms}.
Hence, it remains to prove the converse implication.

Assume to the contrary that the covering is non-amenable. Since $\lambda_{0}^{N}(S_{2}) = \lambda_{0}^{N}(S_{1}) \notin \sigma_{\ess}^{N}(S_{1})$, from Lemma \ref{key} there exists a compact $K \subset M_{1}$ with non-empty interior, and $(f_{n})_{n \in \mathbb{N}} \subset C^{\infty}_{c}(M_{2})$, with $\| f_{n} \|_{L^{2}(M_{2})} = 1$, $\supp f_{n} \cap p^{-1}(K) = \emptyset$, for any $n \in \mathbb{N}$, and $\mathcal{R}_{S_{2}}(f_{n}) \rightarrow \lambda_{0}^{N}(S_{2})$.
For $n \in \mathbb{N}$, consider the pushdown $g_{n}$ of $f_{n}$, defined by
\[
g_{n}(z) := \big( \sum_{y \in p^{-1}(z)} f_{n}(y)^{2} \big)^{1/2},
\]
for any $z \in M_{1}$. Then $g_{n} \in \Lip_{c}(M_{1})$, $\| g_{n} \|_{L^{2}(M_{1})} = 1$ and $\mathcal{R}_{S_{1}}(g_{n}) \leq \mathcal{R}_{S_{2}}(f_{n})$, for any $n \in \mathbb{N}$ (cf. \cite[Section 4]{BMP1}). From Proposition \ref{Bottom of Neumann}, since $\lambda_{0}^{N}(S_{2})=\lambda_{0}^{N}(S_{1})$, it follows that $\mathcal{R}_{S_{1}}(g_{n}) \rightarrow \lambda_{0}^{N}(S_{1})$. From Proposition \ref{Decomposition}, since $\lambda_{0}^{N}(S_{1}) \notin \sigma_{\ess}^{N}(S_{1})$ and $\supp g_{n} \cap K = \emptyset$, this is a contradiction. Hence, the covering is amenable. \qed \medskip



\noindent{\emph{Proof of Theorem \ref{Main result intro}:}} Follows from Theorem \ref{main theorem}, since the manifolds involved may have empty boundary. \qed

\begin{remark}\label{main result Dirichlet}
In Theorem \ref{Main result intro}, the manifolds do not have to be complete. Therefore, from Remark \ref{Dirichlet}, we obtain the corresponding result for the Dirichlet spectrum of Schr\"{o}dinger operators on manifolds with boundary.
\end{remark}

\begin{corollary}
Let $p \colon M_{2} \to M_{1}$ be a Riemannian covering, with $M_{1}$ compact. Then the covering is amenable if and only if it preserves the bottom of the Dirichlet/Neumann spectrum of some/any Schr\"{o}dinger operator.
\end{corollary}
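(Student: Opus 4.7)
The plan is to observe that this corollary is essentially an immediate specialization of Theorem \ref{main theorem} (Neumann case) and its Dirichlet counterpart from Remark \ref{main result Dirichlet}, once we verify that the compactness of $M_{1}$ makes the spectral hypothesis $\lambda_{0}(S_{1}) \notin \sigma_{\ess}(S_{1})$ automatic.

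First I would note that when $M_{1}$ is compact, the form domain $H_{V}(M_{1})$ (as well as its Dirichlet analogue) embeds compactly into $L^{2}(M_{1})$ by the Rellich--Kondrachov theorem. Therefore the Friedrichs extension of any Schr\"{o}dinger operator $S_{1}$ on $M_{1}$, whether taken with Neumann or Dirichlet boundary conditions, has compact resolvent and hence purely discrete spectrum. In particular $\sigma_{\ess}^{N}(S_{1}) = \sigma_{\ess}^{D}(S_{1}) = \emptyset$, so $\lambda_{0}^{N}(S_{1}) \notin \sigma_{\ess}^{N}(S_{1})$ and $\lambda_{0}^{D}(S_{1}) \notin \sigma_{\ess}^{D}(S_{1})$ hold for every Schr\"{o}dinger operator on $M_{1}$, without any further assumption.

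Next, I would apply Theorem \ref{main theorem} to obtain the Neumann equivalence: $\lambda_{0}^{N}(S_{2}) = \lambda_{0}^{N}(S_{1})$ if and only if $p$ is amenable. For the Dirichlet equivalence, Remark \ref{main result Dirichlet} (which reinterprets the Dirichlet extension as the Friedrichs extension on the interior $M_{1}^{\circ}$, a manifold without boundary that is possibly non-complete) combined with Theorem \ref{Main result intro} yields the same equivalence for $\lambda_{0}^{D}$. Both equivalences hold for an \emph{arbitrary} Schr\"{o}dinger operator on $M_{1}$.

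Finally, to unpack the ``some/any'' phrasing: if $p$ is amenable, then the bottom of the Dirichlet (respectively Neumann) spectrum is preserved for every Schr\"{o}dinger operator — this follows from Theorem \ref{Old result for Neumann} in the infinite sheeted case, and in the finite sheeted case from Proposition \ref{Inequality of Bottoms} together with the elementary lift $f \mapsto f \circ p$ (see the proof of Theorem \ref{main theorem}). Conversely, if the bottom is preserved for merely \emph{some} such operator, then since the spectral hypothesis is automatic, Theorem \ref{main theorem} (respectively its Dirichlet counterpart) already forces amenability. I do not anticipate any substantive obstacle here; the corollary is a direct specialization, and the only point to verify is the automatic discreteness of the spectrum on a compact manifold, which is standard.
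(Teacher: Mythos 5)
Your proof is correct and matches the paper's approach: the paper's own justification is precisely that the Dirichlet and Neumann spectra of a Schr\"{o}dinger operator on a compact manifold are discrete, so the hypothesis $\lambda_{0} \notin \sigma_{\ess}$ of Theorem~\ref{main theorem} (and, via Remark~\ref{main result Dirichlet}, of Theorem~\ref{Main result intro}) is automatic, and the equivalence follows. Your additional detail — the Rellich--Kondrachov argument for compact resolvent and the explicit unpacking of ``some/any'' — is a faithful expansion of the same one-line argument.
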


\begin{proof}
Follows from Theorem \ref{main theorem} and Remark \ref{main result Dirichlet}, since the Dirichlet and the Neumann spectrum of a Schr\"{o}dinger operator on a compact manifold is discrete. \qed
\end{proof}\medskip


The next example shows that the assumption $\lambda_{0}(S_{1}) \notin \sigma_{\ess}(S_{1})$ in Theorem \ref{Main result intro} cannot be replaced with $\lambda_{0}(S_{1})$ being an eigenvalue of the Friedrichs extension of $S_{1}$.

\begin{example}\label{Example}
Let $M_{1}$ be a two dimensional torus with a cusp attached, endowed with a Riemannian metric, such that $M_{1}$ is complete and outside a compact set, the cusp is the surface of revolution generated by $1/t^{2}$, with $t \geq 1$. Since $M_{1}$ has finite volume, it follows that $\lambda_{0}(M_{1}) = 0$ and constant functions are $\lambda_{0}(M_{1})$-eigenfunctions of the Friedrichs extension of the Laplacian on $M_{1}$.
Let $x$ be a point of the torus and consider the non-negative quantity
\[
\mu := - \lim_{r \rightarrow +\infty} \frac{1}{r} \ln (\Vol(M_{1}) - \Vol(B(x,r))) \leq - \lim_{r \rightarrow +\infty} \frac{1}{r} \ln (2 \pi \int_{r+1}^{+\infty} \frac{1}{t^{2}} dt) = 0.
\]
From \cite[Theorem 1]{MR757481}, it follows that $\lambda_{0}^{\ess}(M_{1}) = 0$.
Consider the universal covering $p \colon M_{2} \to M_{1}$. Since $\pi_{1}(M_{1})$ is the free group with two generators, it follows that $p$ is non-amenable. Since the fundamental group of the cusp is amenable, from \cite[Corollary 1.6]{Mine}, it follows that $\lambda_{0}(M_{2}) = 0$.
\end{example}

It is clear that Theorem \ref{Main result intro} is more general than the results of \cite{BMP2,Brooks2,RT}. For sake of completeness, we present an example demonstrating this fact.
Let $p \colon M_{2} \to M_{1}$ be a Riemannian covering, with $M_{1}$ non-compact, complete, without boundary, and with $\sigma_{\ess}(M_{1}) = \emptyset$. Then, from Theorem \ref{Main result intro}, it follows that $\lambda_{0}(M_{2}) = \lambda_{0}(M_{1})$ if and only if the covering is amenable. It is worth to point out that since we do not require the covering to be normal, the results of \cite{Brooks2,RT} cannot be applied in this case. Moreover, from \cite[Theorem 3.1]{MR592568}, it follows that the Ricci curvature of $M_{1}$ is not bounded from below. Hence, also the result of \cite{BMP2} cannot be applied in this case.



\section{An application}\label{Section Application}

The aim of this section is to prove the following proposition, which was established for the Laplacian on manifolds without boundary in \cite{BMP2}.

\begin{proposition}\label{Application}
Let $p \colon M_{2} \to M_{1}$ be an infinite sheeted Riemannian covering. If $\lambda_{0}^{N}(S_{1}) = \lambda_{0}^{N}(S_{2})$, then $\lambda_{0}^{N}(S_{2}) \in \sigma_{\ess}^{N}(S_{2})$.
\end{proposition}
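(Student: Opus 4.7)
The plan is to argue by contradiction: assume $\lambda_{0}^{N}(S_{2}) \notin \sigma_{\ess}^{N}(S_{2})$ and split on the amenability of $p$. The amenable case is immediate: since $p$ is infinite sheeted and amenable, Theorem~\ref{Old result for Neumann} yields $\lambda_{0}^{N}(S_{1}) = \lambda_{0}^{N,\ess}(S_{2})$, which combined with the hypothesis $\lambda_{0}^{N}(S_{1}) = \lambda_{0}^{N}(S_{2})$ forces $\lambda_{0}^{N}(S_{2}) \in \sigma_{\ess}^{N}(S_{2})$, a contradiction.

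For the non-amenable case, the plan is to mimic the renormalization argument of Lemma~\ref{key} with the roles of $S_{1}$ and $S_{2}$ exchanged. First, fix any compact $K_{2} \subset M_{2}$ with non-empty interior; by Proposition~\ref{Decomposition} there is $\mu > \lambda_{0}^{N}(S_{2})$ such that $\mathcal{R}_{S_{2}}(f) \geq \mu$ for every non-zero $f \in \Lip_{c}(M_{2})$ with $\supp f \cap K_{2} = \emptyset$. By Proposition~\ref{amenability domains} combined with Theorem~\ref{Neumann compact}, one can find a smoothly bounded, connected compact neighborhood $K$ of $p(K_{2})$ in $M_{1}$ such that $p \colon p^{-1}(K) \to K$ is non-amenable, and hence $\lambda_{0}^{N}(p^{-1}(K)) > 0$; note that $K_{2} \subset p^{-1}(K)$.

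Next, take a positive smooth $\lambda_{0}^{N}(S_{1})$-eigenfunction $\varphi$ of $S_{1}$ on $M_{1}$ (with $\nu_{1}(\varphi) = 0$ on $\partial M_{1}$ if non-empty) and lift it to $\tilde{\varphi}$ on $M_{2}$. By Propositions~\ref{renormalization without boundary} and~\ref{renormalization Neumann} together with the hypothesis $\lambda_{0}^{N}(S_{2}) = \lambda_{0}^{N}(S_{1})$, the renormalization $S_{\tilde{\varphi}}$ satisfies $\lambda_{0}(S_{\tilde{\varphi}}) = 0$, so there exists $(f_{n}) \subset C_{c}^{\infty}(M_{2})$ (with the Neumann condition where relevant) of unit $L^{2}_{\tilde{\varphi}}$-norm with $\mathcal{R}_{S_{\tilde{\varphi}}}(f_{n}) \to 0$. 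Exactly as in the proof of Lemma~\ref{key}, the bound $\lambda_{0}^{N}(p^{-1}(K)) > 0$ together with the comparability of $\varphi$ on $K$ force $\int_{p^{-1}(K)} f_{n}^{2} \tilde{\varphi}^{2} \to 0$. Setting $g_{n} := (1 - \tilde{\chi}) f_{n}$, where $\tilde{\chi}$ is the lift of a bump function $\chi \in C_{c}^{\infty}(M_{1})$ equal to $1$ on a neighborhood of $p(K_{2})$ and supported in $\inter(K)$, yields $\supp g_{n} \cap K_{2} = \emptyset$ and $\mathcal{R}_{S_{\tilde{\varphi}}}(g_{n}) \to 0$. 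Then $h_{n} := \tilde{\varphi} g_{n}$ lies in $C_{c}^{\infty}(M_{2})$ (with $\nu_{2} h_{n} = 0$ when relevant), has $\supp h_{n} \cap K_{2} = \emptyset$, $\|h_{n}\|_{L^{2}(M_{2})} \to 1$, and $\mathcal{R}_{S_{2}}(h_{n}) = \mathcal{R}_{S_{\tilde{\varphi}}}(g_{n}) + \lambda_{0}^{N}(S_{1}) \to \lambda_{0}^{N}(S_{2})$, which contradicts $\mathcal{R}_{S_{2}}(h_{n}) \geq \mu$.

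The main obstacle I anticipate is securing the positive smooth $\lambda_{0}^{N}(S_{1})$-eigenfunction $\varphi$ of $S_{1}$ in the cases where Proposition~\ref{Maximum of positive spectrum}, stated only for complete boundary-less manifolds, does not directly apply. The cylinder-gluing construction of Section~\ref{Section Compact} should reduce the Neumann boundary case to the boundary-less setting, while a parallel Harnack-limit argument along an exhaustion should handle the non-complete boundary-less case. All remaining ingredients---Proposition~\ref{amenability domains}, Theorem~\ref{Neumann compact}, Proposition~\ref{Decomposition}, and the renormalization propositions---are already established in the paper.
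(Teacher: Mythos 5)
Your split into the amenable and non-amenable cases matches the paper, and the renormalization/pushdown machinery you invoke in the non-amenable case is essentially the paper's Lemma~\ref{key}. The genuine gap is exactly the step you flagged as an ``obstacle'': you need a positive smooth function $\varphi$ on $M_{1}$ with $S_{1}\varphi=\lambda_{0}^{N}(S_{1})\varphi$ and, in the boundary case, $\nu_{1}(\varphi)=0$ on $\partial M_{1}$. Neither of the two work-arounds you sketch closes this gap. The cylinder-gluing argument of Section~\ref{Section Compact} was designed for \emph{compact} $M_{1}$; in general, $N_{1}$ need not be complete, and even when it is, the auxiliary potential $V$ need not stay bounded below, nor is it established that the bottom of the spectrum of the glued problem equals $\lambda_{0}^{N}(S_{1})$ (the paper needs Lemma~\ref{bottom zero} and Proposition~\ref{equality of bottoms} for that, and both rely on compactness of $M_{1}$). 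A Harnack-limit along an exhaustion gives a positive solution of $S_{1}\varphi=\lambda_{0}(S_{1})\varphi$ in the interior, but offers no control of the Neumann condition $\nu_{1}(\varphi)=0$ along $\partial M_{1}$, which Proposition~\ref{renormalization Neumann} requires. Without that condition the identity $\langle S_{\tilde\varphi}f,f\rangle_{L^{2}_{\tilde\varphi}}=\int\|\grad f\|^{2}\tilde\varphi^{2}$ breaks down, and the renormalization step of Lemma~\ref{key} fails.

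The paper resolves this differently, and this is the idea you are missing: it does not try to produce a positive, possibly non-$L^{2}$ ground state of $S_{1}$. Instead, Lemma~\ref{existence of eigenfunction} shows that under the standing hypotheses, $\lambda_{0}^{N}(S_{1})$ is actually an $L^{2}$-eigenvalue of $S_{1}^{N}$, after which Proposition~\ref{eigenfunction} automatically delivers a positive smooth eigenfunction satisfying the Neumann condition. The proof of Lemma~\ref{existence of eigenfunction} goes through Proposition~\ref{minimizing sequence weak conv}: if $\lambda_{0}^{N}(S_{1})$ were \emph{not} an eigenvalue, every minimizing sequence on $M_{1}$ would converge weakly to $0$; but pushing down the minimizing sequence $f_{n}\to\varphi$ on $M_{2}$ (where $\varphi$ is the $L^{2}$-eigenfunction of $S_{2}^{N}$ one \emph{does} have, since $\lambda_{0}^{N}(S_{2})\notin\sigma_{\ess}^{N}(S_{2})$) and pairing with the pushdown of a nonnegative bump function gives $\langle\chi_{1},g_{n}\rangle_{L^{2}(M_{1})}\geq\langle\chi_{2},f_{n}\rangle_{L^{2}(M_{2})}\to\int_{M_{2}}\chi_{2}\varphi>0$, contradicting $g_{n}\rightharpoonup 0$. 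Once Lemma~\ref{existence of eigenfunction} is in place, you should simply invoke Lemma~\ref{key} directly rather than re-derive it; your remaining steps then coincide with the paper's.
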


The main point of this proposition is that the covering is not required to be normal (or to have infinite deck transformations group), since in this case, according to \cite[Corollary 1.4]{Mine}, the spectrum of $S_{2}^{N}$ coincides with its essential spectrum.
It is worth to point out that the manifolds in this proposition may have empty boundary.
Moreover, since they may be non-complete, from Remark \ref{Dirichlet}, the analogous statement holds for the Dirichlet spectrum of Schr\"{o}dinger operators on manifolds with boundary.

\begin{proposition}\label{minimizing sequence weak conv}
Let $S= \Delta +V$ be a Schr\"{o}dinger operator on a Riemannian manifold $M$ and $(f_{n})_{n \in \mathbb{N}} \subset \Lip_{c}(M)$, with $\| f_{n} \|_{L^{2}(M)} = 1$ and $\mathcal{R}_{S}(f_{n}) \rightarrow \lambda_{0}^{N}(S)$. If $\lambda_{0}^{N}(S)$ is not an eigenvalue of $S^{N}$, then there exists a subsequence $(f_{n_{k}})_{k \in \mathbb{N}}$, such that $f_{n_{k}} \rightharpoonup 0$ in $L^{2}(M)$.
\end{proposition}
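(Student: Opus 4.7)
The plan is to exploit weak compactness in the Hilbert space $H_V(M)$ introduced in Subsection \ref{Schroedinger preliminaries}. Since $\|f_n\|_{L^2(M)} = 1$ and $\|f_n\|_{H_V(M)}^2 = \mathcal{R}_S(f_n) + 1 - \inf_M V$, the hypothesis $\mathcal{R}_S(f_n) \to \lambda_0^N(S)$ gives that $(f_n)$ is bounded in $H_V(M)$; by Proposition \ref{Approximation} it lies in $H_V(M)$ in the first place. Passing to a subsequence, I may assume $f_n \rightharpoonup \varphi$ in $H_V(M)$, and since the inclusion $H_V(M) \hookrightarrow L^2(M)$ is continuous and linear, also $f_n \rightharpoonup \varphi$ in $L^2(M)$. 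It therefore suffices to prove that $\varphi = 0$.

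Set $c := \lambda_0^N(S) + 1 - \inf_M V$, so $\|f_n\|_{H_V(M)}^2 \to c$. Suppose toward contradiction that $\varphi \neq 0$ and write $g_n := f_n - \varphi$, so that $g_n \rightharpoonup 0$ both in $H_V(M)$ and in $L^2(M)$. Expanding the two norms and using that the cross terms $\langle \varphi, g_n\rangle_{H_V(M)}$ and $\langle \varphi, g_n\rangle_{L^2(M)}$ tend to $0$, I get
\[
\lim_n \|g_n\|_{L^2(M)}^2 = 1 - \|\varphi\|_{L^2(M)}^2, \qquad \lim_n \|g_n\|_{H_V(M)}^2 = c - \|\varphi\|_{H_V(M)}^2.
\]
Proposition \ref{Bottom of Friedrichs} applied with $\mathcal{H}' = H_V(M)$ yields $\|v\|_{H_V(M)}^2 \geq c\,\|v\|_{L^2(M)}^2$ for every $v \in H_V(M)$. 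Inserting $v = g_n$ and taking the limit gives $c - \|\varphi\|_{H_V(M)}^2 \geq c(1 - \|\varphi\|_{L^2(M)}^2)$, i.e.\ $\|\varphi\|_{H_V(M)}^2 \leq c\,\|\varphi\|_{L^2(M)}^2$. The reverse inequality applied to $v = \varphi$ forces equality, so $\varphi \neq 0$ saturates the Rayleigh bound.

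It remains to upgrade this saturation to the statement that $\varphi$ is a $\lambda_0^N(S)$-eigenfunction of $S^N$. Writing the first-order variation of $\|\cdot\|_{H_V(M)}^2/\|\cdot\|_{L^2(M)}^2$ at the minimizer $\varphi$ along an arbitrary $\psi \in H_V(M)$ yields $\langle \varphi, \psi\rangle_{H_V(M)} = c\,\langle \varphi, \psi\rangle_{L^2(M)}$. Subtracting $(1 - \inf_M V)\langle \varphi, \psi\rangle_{L^2(M)}$ on both sides matches precisely the defining relation of $\mathcal{D}(S^N)$ in the Friedrichs construction of Section \ref{Section Preliminaries}, with $v' = \lambda_0^N(S)\varphi \in L^2(M)$; hence $\varphi \in \mathcal{D}(S^N)$ and $S^N\varphi = \lambda_0^N(S)\varphi$. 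Since $\varphi \neq 0$, this contradicts the hypothesis that $\lambda_0^N(S)$ is not an eigenvalue of $S^N$, proving $\varphi = 0$ and hence $f_n \rightharpoonup 0$ in $L^2(M)$.

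The main technical point I expect to require care is the Brezis-type splitting in the second paragraph: one must simultaneously lower-bound $\|g_n\|_{H_V(M)}^2$ through Proposition \ref{Bottom of Friedrichs} and match the resulting asymptotic with $\|f_n\|_{H_V(M)}^2 \to c$, which is exactly what forces the nonzero weak limit (if any) to minimize the Rayleigh quotient. Everything else is routine: the domain identification of $\mathcal{D}(S^N)$ converts the minimizer directly into an eigenfunction without any need for elliptic regularity beyond what is already built into the Friedrichs construction.
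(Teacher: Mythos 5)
Your proof is correct, and it takes a genuinely different route from the one in the paper. The paper invokes the spectral theorem to realize $S^{N}$ as a multiplication operator by a measurable function $f$ on a measure space $X$, shows that the weak $L^{2}$-limit $g$ of the (approximating smooth) minimizing sequence must vanish on every set $\{f \geq \lambda_{0}^{N}(S) + \varepsilon\}$, hence is supported on $\{f = \lambda_{0}^{N}(S)\}$ and is therefore a $\lambda_{0}^{N}(S)$-eigenfunction, which forces $g = 0$. You instead run a direct variational argument entirely within the form domain $H_{V}(M)$: bounded minimizing sequences have a weak limit $\varphi$ in $H_{V}(M)$; the Brezis-type splitting of the two norms combined with the sharp inequality $\|v\|_{H_{V}}^{2} \geq c\|v\|_{L^{2}}^{2}$ from Proposition \ref{Bottom of Friedrichs} shows that a nonzero $\varphi$ must saturate the Rayleigh bound; and the first-order condition at the minimizer, read off against the explicit definition of $\mathcal{D}(T^{(F)})$ in the Friedrichs construction, upgrades this to $\varphi \in \mathcal{D}(S^{N})$ with $S^{N}\varphi = \lambda_{0}^{N}(S)\varphi$, contradicting the hypothesis. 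Your route buys you a proof that never leaves the Friedrichs machinery already set up in Section \ref{Section Preliminaries} and needs no spectral theorem; the paper's route has the virtue of one line once the multiplication-operator picture is in place, and generalizes painlessly to statements about the whole spectrum rather than just the bottom. Both are sound; yours is arguably the more elementary and self-contained of the two.
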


\begin{proof}
From Proposition \ref{Approximation}, there exists $(f_{n}^{\prime}) \in C^{\infty}_{c}(M) \cap \mathcal{D}(S^{N})$, with $\| f_{n}^{\prime} \|_{L^{2}(M)} = 1$ and $\| f_{n} - f_{n}^{\prime} \|_{H_{V}(M)} \leq 1/n$, for any $n \in \mathbb{N}$, where $H_{V}(M)$ is the space defined in Subsection \ref{Schroedinger preliminaries}. It is clear that $\mathcal{R}_{S}(f_{n}^{\prime}) \rightarrow \lambda_{0}^{N}(S)$ and it suffices to prove the statement for $(f_{n}^{\prime})_{n \in \mathbb{N}}$. From the Spectral Theorem (cf. \cite[Chapter 8]{Taylor2}), there exists a measure space $X$, such that $L^{2}(M)$ is isometrically isomorphic to $L^{2}(X)$, and under this identification, $S^{N}$ corresponds to a multiplication operator with a measurable function $f \colon X \to \mathbb{R}$; that is, an operator of the form $\mu_{f} \colon \mathcal{D}(\mu_{f}) \subset L^{2}(X) \to L^{2}(X)$, with $\mathcal{D}(\mu_{f}) := \{ g \in L^{2}(X) : fg \in L^{2}(X) \}$ and $\mu_{f}(g) = fg$, for any $g \in \mathcal{D}(\mu_{f})$. 
The spectrum of $S^{N}$ coincides with the essential range of $f$ and in particular, $f \geq \lambda_{0}^{N}(S)$ almost everywhere.

Let $(g_{n})_{n \in \mathbb{N}} \subset \mathcal{D}(\mu_{f})$ be the sequence corresponding to $(f_{n}^{\prime})_{n \in \mathbb{N}}$ under this identification. Since $\| g_{n} \|_{L^{2}(X)} = 1$, after passing to a subsequence, we have that $g_{n} \rightharpoonup g$ in $L^{2}(X)$, for some $g \in L^{2}(X)$. It is clear that
\[
\int_{X} (f - \lambda_{0}^{N}(S)) g_{n}^{2} = \langle \mu_{f} g_{n},g_{n} \rangle_{L^{2}(X)} - \lambda_{0}^{N}(S) =  \mathcal{R}_{S}(f^{\prime}_{n})- \lambda_{0}^{N}(S) \rightarrow 0.
\]
For $\varepsilon > 0$, consider the measurable set $A_{\varepsilon} := \{ f \geq \lambda_{0}^{N}(S) + \varepsilon \}$. Evidently, we have
\[
\int_{ A_{\varepsilon} } g_{n}^{2} \leq \frac{1}{\varepsilon} \int_{A_{\varepsilon}} (f - \lambda_{0}(S^{N})) g_{n}^{2} \rightarrow 0.
\]
Since $g_{n} \rightharpoonup g$ in $L^{2}(X)$, this yields that $g= 0$ almost everywhere in $A_{\varepsilon}$. In particular, $g = 0$ almost everywhere in $X \smallsetminus f^{-1}(\{\lambda_{0}^{N}(S)\})$, which yields that $\mu_{f} g = \lambda_{0}^{N}(S) g$. Since $\lambda_{0}^{N}(S)$ is not an eigenvalue of $S^{N}$, it follows that $g=0$. Therefore, $g_{n} \rightharpoonup 0$ in $L^{2}(X)$, which yields that $f_{n}^{\prime} \rightharpoonup 0$ in $L^{2}(M)$.\qed
\end{proof}

\begin{lemma}\label{existence of eigenfunction}
Let $p \colon M_{2} \to M_{1}$ be a Riemannian covering. If $\lambda_{0}^{N}(S_{2}) = \lambda_{0}^{N}(S_{1}) \notin \sigma_{\ess}^{N}(S_{2})$, then $\lambda_{0}^{N}(S_{1})$ is an eigenvalue of $S_{1}^{N}$.
\end{lemma}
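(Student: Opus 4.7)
The plan is to argue by contradiction: assume $\lambda_{0}^{N}(S_{1})$ is not an eigenvalue of $S_{1}^{N}$, and derive a contradiction by comparing a minimizing sequence on $M_2$ with its pushdown on $M_1$.

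First, since $\lambda_0^N(S_2) \notin \sigma_{\ess}^N(S_2)$, Proposition \ref{Bottom of Neumann} yields a sequence $(g_{n})_{n \in \mathbb{N}} \subset C^{\infty}_{c}(M_{2})$ with $\|g_{n}\|_{L^{2}(M_{2})} = 1$ and $\mathcal{R}_{S_{2}}(g_{n}) \rightarrow \lambda_{0}^{N}(S_{2})$. By Proposition \ref{minimizing sequence}, after passing to a subsequence, $g_{n} \rightarrow \varphi$ in $L^{2}(M_{2})$ for some $\lambda_{0}^{N}(S_{2})$-eigenfunction $\varphi$ of $S_{2}^{N}$ with $\|\varphi\|_{L^{2}(M_{2})} = 1$.

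Next, following the pushdown construction of Proposition \ref{Inequality of Bottoms}, define $f_{n}(z) := (\sum_{y \in p^{-1}(z)} g_{n}(y)^{2})^{1/2} \in \Lip_c(M_1)$, and similarly the $L^{2}(M_1)$ function $f_{\varphi}(z) := (\sum_{y \in p^{-1}(z)} \varphi(y)^{2})^{1/2}$. Then $\|f_{n}\|_{L^{2}(M_{1})} = 1$, $\|f_{\varphi}\|_{L^{2}(M_{1})} = 1$, and the hypothesis $\lambda_0^N(S_1) = \lambda_0^N(S_2)$ together with $\mathcal{R}_{S_{1}}(f_{n}) \leq \mathcal{R}_{S_{2}}(g_{n})$ forces $\mathcal{R}_{S_{1}}(f_{n}) \rightarrow \lambda_{0}^{N}(S_{1})$. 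Assuming $\lambda_{0}^{N}(S_{1})$ is not an eigenvalue of $S_{1}^{N}$, Proposition \ref{minimizing sequence weak conv} gives (along a further subsequence) $f_{n} \rightharpoonup 0$ in $L^{2}(M_{1})$.

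The key step is then to show that $f_{n} \rightarrow f_{\varphi}$ strongly in $L^{2}(M_{1})$, which together with $f_n \rightharpoonup 0$ forces $f_\varphi = 0$, contradicting $\|f_{\varphi}\|_{L^{2}(M_{1})} = 1$ (and hence $\|\varphi\|_{L^{2}(M_2)}=1$). For the strong convergence, one first bounds
\[
\int_{M_{1}} |f_{n}^{2} - f_{\varphi}^{2}| \leq \int_{M_{2}} |g_{n}^{2} - \varphi^{2}| \leq \|g_{n} - \varphi\|_{L^{2}(M_{2})} \|g_{n} + \varphi\|_{L^{2}(M_{2})} \rightarrow 0,
\]
and then uses the elementary pointwise inequality $|a - b|^{2} \leq |a^{2} - b^{2}|$ valid for $a,b \geq 0$ to conclude $\|f_{n} - f_{\varphi}\|_{L^{2}(M_{1})}^{2} \leq \int_{M_{1}} |f_{n}^{2} - f_{\varphi}^{2}| \rightarrow 0$.

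I do not expect serious obstacles here: the main technical point is the passage from $L^{2}$-convergence of $g_{n}$ on $M_2$ to $L^{2}$-convergence of the pushdowns $f_n$ on $M_1$, which is handled by the two displayed estimates above. Everything else is a direct application of the already established propositions on the behaviour of minimizing sequences and on the pushdown inequality for Rayleigh quotients.
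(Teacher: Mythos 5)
Your proof is correct, and it takes a genuinely different route than the paper's. The paper first invokes Proposition \ref{eigenfunction} to obtain a smooth, strictly positive $\lambda_0^N(S_2)$-eigenfunction $\varphi$ on $M_2$, approximates it in $H_{V\circ p}(M_2)$ by smooth compactly supported $f_n$, pushes those down to $g_n$ on $M_1$, and then pairs against a fixed non-negative test function $\chi_2$: by a fiberwise Cauchy--Schwarz inequality, $\langle \chi_2, f_n\rangle_{L^2(M_2)} \le \langle \chi_1, g_n\rangle_{L^2(M_1)}$, where $\chi_1$ is the pushdown of $\chi_2$; the left-hand side tends to $\int \chi_2\varphi > 0$ (which requires the positivity of $\varphi$), while the right-hand side tends to $0$ by the weak convergence $g_n \rightharpoonup 0$ from Proposition \ref{minimizing sequence weak conv}. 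You instead bypass the regularity and positivity content of Proposition \ref{eigenfunction} entirely: you use Proposition \ref{minimizing sequence} to get $g_n \to \varphi$ strongly in $L^2(M_2)$, and then show that the pushdown map is continuous in $L^2$ via the chain $\|f_n - f_\varphi\|_{L^2(M_1)}^2 \le \int_{M_1}|f_n^2 - f_\varphi^2| \le \int_{M_2}|g_n^2 - \varphi^2| \le 2\|g_n - \varphi\|_{L^2(M_2)}$ (indeed one even has the sharper $\|f_n - f_\varphi\|_{L^2(M_1)} \le \|g_n - \varphi\|_{L^2(M_2)}$ by the reverse triangle inequality in $\ell^2$ applied fiberwise). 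Combining $f_n \rightharpoonup 0$ with $f_n \to f_\varphi$ forces $\|f_\varphi\|_{L^2(M_1)} = 0$, contradicting $\|f_\varphi\|_{L^2(M_1)} = \|\varphi\|_{L^2(M_2)} = 1$. Your argument is a bit more self-contained since it only needs $\|\varphi\|_{L^2}=1$ rather than the strict positivity of $\varphi$; the paper's version avoids Proposition \ref{minimizing sequence} and the pushdown-continuity estimate in exchange for using the pointwise positivity of the eigenfunction.
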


\begin{proof}
Assume to the contrary that $\lambda_{0}^{N}(S_{1})$ is not an eigenvalue of $S^{N}_{1}$. From Proposition \ref{eigenfunction}, there exists a square-integrable, $\lambda_{0}^{N}(S_{2})$-eigenfunction $\varphi$ of $S_{2}^{N}$, which is smooth and positive in $M_{2}$. Without loss of generality, we may assume that $\| \varphi \|_{L^{2}(M_{2})} = 1$. Since $\varphi \in H_{V \circ p}(M_{2})$, there exists $(f_{n})_{n \in \mathbb{N}} \subset C^{\infty}_{c}(M_{2})$, with $\| f_{n} \|_{L^{2}(M_{2})} = 1$ and $f_{n} \rightarrow \varphi$ in $H_{V \circ  p}(M_{2})$, where $H_{V \circ p}(M_{2})$ is the space defined in Subsection \ref{Schroedinger preliminaries}. Evidently, we have that $\mathcal{R}_{S_{2}}(f_{n}) \rightarrow \lambda_{0}^{N}(S_{2})$.


Consider the pushdowns
\[
g_{n}(z) := \big( \sum_{y \in p^{-1}(z)} f_{n}(y)^{2} \big)^{1/2}.
\]
on $M_{1}$, with $n \in \mathbb{N}$. Then $g_{n} \in \Lip_{c}(M_{1})$, $\| g_{n} \|_{L^{2}(M_{1})} = 1$ and $\mathcal{R}_{S_{1}}(g_{n}) \leq \mathcal{R}_{S_{2}}(f_{n})$, for any $n \in \mathbb{N}$ (cf. \cite[Section 4]{BMP1}). From Proposition \ref{Bottom of Neumann}, since $\lambda_{0}^{N}(S_{1})=\lambda_{0}^{N}(S_{2})$, it follows that $\mathcal{R}_{S_{1}}(g_{n}) \rightarrow \lambda_{0}^{N}(S_{1})$. Since $\lambda_{0}^{N}(S_{1})$ is not an eigenvalue of $S^{N}_{1}$, from Proposition \ref{minimizing sequence weak conv}, after passing to a subsequence, we have that $g_{n} \rightharpoonup 0$ in $L^{2}(M_{1})$.

Consider a non-negative $\chi_{2} \in C^{\infty}_{c}(M_{2}) \smallsetminus \{0\}$, and its pushdown $\chi_{1} \in \Lip_{c}(M_{1})$ on $M_{1}$. Then
\begin{eqnarray}
\langle \chi_{2} , f_{n} \rangle_{L^{2}(M_{2})} &=& \int_{M_{1}} \sum_{y \in p^{-1}(z)} \chi_{2}(y) f_{n}(y) dz \nonumber \\ 
&\leq& \int_{M_{1}} (\sum_{y \in p^{-1}(z)} \chi_{2}(y)^{2})^{1/2} (\sum_{y \in p^{-1}(z)} f_{n}(y)^{2})^{1/2} dz \nonumber \\
&=& \langle \chi_{1} , g_{n} \rangle_{L^{2}(M_{1})}. \nonumber
\end{eqnarray}
This is a contradiction, since $\langle \chi_{1} , g_{n} \rangle_{L^{2}(M_{1})} \rightarrow 0$ and $\langle \chi_{2} , f_{n} \rangle_{L^{2}(M_{2})} \rightarrow \int_{M_{2}} \chi_{2} \varphi > 0$. Therefore, $\lambda_{0}^{N}(S_{1})$ is an eigenvalue of $S_{1}^{N}$. \qed
\end{proof}\medskip

\noindent\emph{Proof of Proposition \ref{Application}:}
If the covering is amenable, then the claim follows from Theorem \ref{Old result for Neumann}. Hence, it remains to prove the statement for $p$ non-amenable. Assume to the contrary that $\lambda_{0}^{N}(S_{2}) \notin \sigma_{\ess}^{N}(S_{2})$. From Lemma \ref{existence of eigenfunction}, it follows that $\lambda_{0}^{N}(S_{1})$ is an eigenvalue of $S_{1}^{N}$. Since $\lambda_{0}^{N}(S_{2}) = \lambda_{0}^{N}(S_{1})$, from Lemma \ref{key}, there exists a compact set $K \subset M_{1}$ with non-empty interior, and $(f_{n})_{n \in \mathbb{N}} \subset C^{\infty}_{c}(M_{2}) \smallsetminus\{0\}$, such that $\mathcal{R}_{S_{2}}(f_{n}) \rightarrow \lambda_{0}^{N}(S_{2})$ and $\supp f_{n} \cap p^{-1}(K) = \emptyset$, for any $n \in \mathbb{N}$. 
From Proposition \ref{Decomposition}, since $\lambda_{0}^{N}(S_{2}) \notin \sigma^{N}_{\ess}(S_{2})$ and $p^{-1}(K)$ contains compact sets of positive measure, this is a contradiction. \qed

\begin{bibdiv}
\begin{biblist}

\bib{BMP2}{article}{
	author={Ballmann, Werner},
	author={Matthiesen, Henrik},
	author={Polymerakis, Panagiotis}
	title={Bottom of spectra and amenability of coverings},
	journal={MPI-Preprint 2018, arxiv.org/abs/1803.07353}
}

\bib{BMP1}{article}{
   author={Ballmann, Werner},
   author={Matthiesen, Henrik},
   author={Polymerakis, Panagiotis},
   title={On the bottom of spectra under coverings},
   journal={Math. Z.},
   volume={288},
   date={2018},
   number={3-4},
   pages={1029--1036},
   issn={0025-5874},
}

\bib{MR3104995}{article}{
   author={B{\'e}rard, Pierre},
   author={Castillon, Philippe},
   title={Spectral positivity and Riemannian coverings},
   journal={Bull. Lond. Math. Soc.},
   volume={45},
   date={2013},
   number={5},
   pages={1041--1048},
}

\bib{MR2891739}{article}{
   author={Bessa, G. Pacelli},
   author={Montenegro, J. Fabio},
   author={Piccione, Paolo},
   title={Riemannian submersions with discrete spectrum},
   journal={J. Geom. Anal.},
   volume={22},
   date={2012},
   number={2},
   pages={603--620},
}

\bib{MR757481}{article}{
   author={Brooks, Robert},
   title={On the spectrum of noncompact manifolds with finite volume},
   journal={Math. Z.},
   volume={187},
   date={1984},
   number={3},
   pages={425--432},
   issn={0025-5874},
}

\bib{Brooks2}{article}{
   author={Brooks, Robert},
   title={The bottom of the spectrum of a Riemannian covering},
   journal={J. Reine Angew. Math.},
   volume={357},
   date={1985},
   pages={101--114},
}

\bib{Brooks1}{article}{
	label={Bro80}
   author={Brooks, Robert},
   title={The fundamental group and the spectrum of the Laplacian},
   journal={Comment. Math. Helv.},
   volume={56},
   date={1981},
   number={4},
   pages={581--598},
}

\bib{Cheng-Yau}{article}{
	author={Cheng, S. Y.},
	author={Yau, S. T.},
	title={Differential equations on Riemannian manifolds and their geometric	applications},
	journal={Comm. Pure Appl. Math.},
	volume={28},
	date={1975},
	number={3},
	pages={333--354},
}

\bib{Donnelly-Li}{article}{
	author={Donnelly, Harold},
	author={Li, Peter},
	title={Pure point spectrum and negative curvature for noncompact manifolds},
	journal={Duke Math. J.},
	volume={46},
	date={1979},
	number={3},
	pages={497--503},
}

\bib{MR592568}{article}{
   author={Donnelly, Harold},
   title={On the essential spectrum of a complete Riemannian manifold},
   journal={Topology},
   volume={20},
   date={1981},
   number={1},
   pages={1--14},
   issn={0040-9383},
}

\bib{MR2597943}{book}{
   author={Evans, Lawrence C.},
   title={Partial differential equations},
   series={Graduate Studies in Mathematics},
   volume={19},
   edition={2},
   publisher={American Mathematical Society, Providence, RI},
   date={2010},
   pages={xxii+749},
   isbn={978-0-8218-4974-3},
}

\bib{MR562550}{article}{
   author={Fischer-Colbrie, Doris},
   author={Schoen, Richard},
   title={The structure of complete stable minimal surfaces in $3$-manifolds
   of nonnegative scalar curvature},
   journal={Comm. Pure Appl. Math.},
   volume={33},
   date={1980},
   number={2},
   pages={199--211},
}

\bib{MR1335452}{book}{
   author={Kato, Tosio},
   title={Perturbation theory for linear operators},
   series={Classics in Mathematics},
   note={Reprint of the 1980 edition},
   publisher={Springer-Verlag, Berlin},
   date={1995},
   pages={xxii+619},
}

\bib{Mine}{article}{
	author={Panagiotis Polymerakis},
	title={On the spectrum of differential operators under Riemannian coverings},
	journal={MPI-Preprint 2018, arxiv.org/abs/1803.03223},
}

\bibitem{RT}
{T. Roblin and S. Tapie,
\emph{Exposants critiques et moyennabilit\'e.}
G\'eom\'etrie ergodique, 61--92, Monogr. \'Enseign. Math., 43, \'Enseignement Math., Geneva, 2013,
MR3220551, Zbl 1312.53060.}

\bib{MR882827}{article}{
   author={Sullivan, Dennis},
   title={Related aspects of positivity in Riemannian geometry},
   journal={J. Differential Geom.},
   volume={25},
   date={1987},
   number={3},
   pages={327--351},
}

	\bib{Taylor1}{book}{
		author={Taylor, Michael E.},
		title={Partial differential equations I. Basic theory},
		series={Applied Mathematical Sciences},
		volume={115},
		edition={2},
		publisher={Springer, New York},
		date={2011},
		pages={xxii+654},
		isbn={978-1-4419-7054-1},
	}
	
	\bib{Taylor2}{book}{
		author={Taylor, Michael E.},
		title={Partial differential equations II. Qualitative studies of linear
			equations},
		series={Applied Mathematical Sciences},
		volume={116},
		edition={2},
		publisher={Springer, New York},
		date={2011},
		pages={xxii+614},
		isbn={978-1-4419-7051-0},
	}

\end{biblist}
\end{bibdiv}

\noindent Max Planck Institute for Mathematics \\
Vivatsgasse 7, 53111, Bonn \\
E-mail address: polymerp@mpim-bonn.mpg.de

\end{document}